\renewcommand\section{\@startsection
	{section}
{1}
{0pt}
{-3.5ex plus -1ex minus -.2ex}
{2.3ex plus.2ex}
{\centering\normalfont\Large\scshape}}
\renewcommand\subsection{\@startsection
	{subsection}
{2}
{0pt}
{-3ex plus -1ex minus -.2ex}
{1ex plus.2ex}
{\normalfont\large\bfseries}}
\renewcommand\subsubsection{\@startsection
	{subsubsection}
{3}
{0pt}
{-1.5ex plus -1ex minus -.2ex}
{0.8ex plus .2ex}
{\normalfont\bfseries}}
\renewcommand\paragraph{\@startsection
	{paragraph}
{4}
{0em}
{-1.2ex plus -0.4ex minus -.2ex}
{0ex}
{\normalfont\bfseries }}
\newcommand{\Z}{\mathbbm{Z}}
\newcommand{\N}{\mathbbm{N}}
\newcommand{\A}{\mathcal{A}}
\newcommand{\B}{\mathcal{B}}
\newcommand{\F}{\mathcal{F}}
\newcommand{\s}{\sigma}
\providecommand{\ceil}[1]{\lceil #1 \rceil}
\theoremstyle{plain}
\newtheorem{theorem}{Theorem}[section]
\newtheorem{proposition}[theorem]{Proposition}
\newtheorem{corollary}[theorem]{Corollary}
\newtheorem{lemma}[theorem]{Lemma}
\theoremstyle{definition}
\newtheorem*{definition}{Definition}
\theoremstyle{remark}
\newtheorem*{remark}{Remark}
\newtheorem{example}{Example}[section]
\newcounter{claimcount}[theorem]
\newcommand{\bprf}[1][Proof:]{\begin{list}{}    {\setlength{\leftmargin}{0.5em}
\setlength{\rightmargin}{0em}  \setlength{\listparindent}{1em}}   \item {\em
\hspace{-1em}  #1  }}
\newcommand{\eprf}{\end{list}}
\title{Parametrization by Horizontal Constraints in the Study of Algorithmic Properties of $\Z^2$-Subshift of Finite Type}
\date{}
\author{Solène J. Esnay, Mathieu Sablik}
\newcommand{\define}{\emph}
\begin{document}
\maketitle

\begin{abstract}
The non-emptiness, called the Domino Problem, and the characterization of the possible entropies of $\Z^2$-subshifts of finite type are standard problems of symbolic dynamics. In this article we study these questions with horizontal constraints fixed beforehand as a parameter. We determine for which horizontal constraints the Domino Problem is undecidable and when all right-recursively enumerable numbers can be obtained as entropy, with two approaches: either the additional local rules added to the horizontal constraints can be of any shape, or they can only be vertical rules.
\end{abstract}

\section*{Introduction}

The Domino Problem is a classical decision problem introduced by H. Wang~\cite{wang}: given a finite set of tiles that are squares with colored edges, called Wang tiles, we ask if it is possible to tile the plane with translated copies of these tiles so that contiguous edges have the same color. This question is also central in symbolic dynamics. A $\Z^d$-subshift of finite type (SFT for short) is a set of colorings of $\Z^d$ by a finite alphabet, called configurations, and a finite set of patterns that are forbidden to appear in those configurations. The set of tilings obtained when we tile the plane with a Wang tile set is an example of two-dimensional SFT. In the setting of SFTs, the Domino Problem becomes: given a finite set of forbidden patterns $\mathcal{F}$, is the associated subshift of finite type, denoted $X_{\mathcal{F}}$, empty?

On SFTs over $\Z$, the Domino Problem is known to be decidable. On those over $\Z^2$, Wang conjectured that the Domino Problem was decidable too, and produced an algorithm of decision relying on the hypothetical fact that all subshifts of finite type contained some periodic configuration. However, his claim was disproved by Berger~\cite{dpberger} who proved that the Domino Problem over any $\Z^d, d\geq 2$ is algorithmically undecidable. The key of the proof is the existence of a $\Z^d$-subshift of finite type containing only aperiodic configurations, on which computations are encoded. In the decades that followed, many alternative proofs of this fact were provided~\cite{dprobinson,dpmozes,dpkari}.

The exact conditions to cross this frontier between decidability and undecidability have been intensively studied under different points of view during the last decade. To explore the difference of behavior between $\Z$ and $\Z^2$, the Domino Problem has been extended on discrete groups~\cite{CohenGoodmanS2015,Jeandel-2015,twoends,Aubrun-Barbieri-Jeandel-2019,Aubrun-Barbieri-Moutot-2019} and fractal structures~\cite{Barbieri-Sablik-2016-fractal} in view to determine which types of structures can implement computation. The frontier has also been studied by restraining complexity (number of patterns of a given size)~\cite{karimoutot} or bounding the difference between numbers of colors and tiles~\cite{undecidabilityconstraints}. Additional dynamical constraints were also considered, such as the block gluing property~\cite[Lemma 3.1]{pavlovschraudner} or minimality~\cite{Gangloff-Sablik-2018-SimMinimal}.

Another way to highlight the computational power of multidimensional SFTs is to consider the algorithmic complexity of the entropy. A famous result by M. Hochman and T. Meyerovitch~\cite{HM} states that the possible values of the entropy for SFTs of dimension $d\geq 2$ are exactly the non-negative right-recursively enumerable ($\Pi_1$-computable) numbers; whereas in dimension one they can only be the logarithm of Perron numbers. It is natural to try to determine, using different parameters, where this change in computational power happens: this can be achieved considering, for instance, SFTs indexed on discrete groups~\cite{Barbieri-2021}. Once again, dynamical constraints are another relevant parameter: under strong irreducibility constraints, such as being block gluing, the entropy becomes computable~\cite{pavlovschraudner}. It is possible to extend the notion of block gluingness by adding a gap function $f$ that yields the distance $f(n)$ which allows for the concatenation of two rectangular blocks of size $n$ of the language: depending on the asymptotic behavior of $f(n)$, the set of entropies can be either any $\Pi_1$-computable number or only some computable numbers~\cite{Gangloff-Hellouin-2019}. The exact frontier for this parametrization is only known for subshifts with decidable language~\cite{Gangloff-Sablik-2017-BlockGluing}. 

This decrease in computational complexity, on the entropy or the Domino Problem, can be interpreted as a reduction of the computational power of the model as a whole under the added restriction.

In this article we study the algorithmic complexity of these two properties, the Domino Problem and the entropy, parametrized by local constraints. Formally, given a subshift of finite type defined by a set of forbidden patterns $\mathcal{H}$, we  want to characterize for which of these sets we have the following properties:
\begin{itemize}
\item the set $DP_h(\mathcal{H})=\{<\mathcal{F}>:X_{\mathcal{H}\cup\mathcal{F}}\ne\emptyset\}$ is undecidable;
\item $\left\{h(X_{\mathcal{H}\cup\mathcal{F}}):\mathcal{F} \textrm{ set of forbidden pattens}\right\}$ is the set of $\Pi_1$-computable numbers in $[0,h(X_{\mathcal{H}})]$.
\end{itemize}

Of course, any set of forbidden patterns which defines a SFT conjugated to $X_\mathcal{H}$ satisfies the same properties as $\mathcal{H}$; in other words these properties are invariant by conjugacy. Determining the possible values of $h(X_{\mathcal{H}\cup\mathcal{F}})$ for any set of forbidden patterns $\mathcal{F}$ comes down to knowing the possible entropies of all subsystems of $X_\mathcal{H}$ that are of finite type. A result by A. Desai~\cite{desai} proves that this set is dense in $[0,h(X_{\mathcal{H}})]$; we want to know when all the $\Pi_1$-computable numbers of that interval are obtained. 

This article focuses on when the parametrization is given by a set of constraints $\mathcal{H}$ that are specifically horizontal constraints, associated to a one-dimensional SFT $H$. This means that we impose the horizontal projective subaction of two-dimensional SFTs to be included in $H$. For this case we have a full characterization:
\begin{itemize}
\item the set $DP_h(\mathcal{H})$ (also denoted  $DP(H)$) is decidable if and only if $H$ contains only periodic points (Proposition~\ref{prop:DecPeriodicPoint});
\item $\left\{h(X_{\mathcal{H}\cup\mathcal{F}}):\mathcal{F} \textrm{ set of forbidden patterns}\right\}$ is the set of $\Pi_1$-computable numbers in $[0,h(H)]$ (Theorem~\ref{th:RealizationEntropy}).
\end{itemize}

A consequence of the second point is that given an alphabet $\A$, the set of possible entropies of two-dimensional SFTs on this alphabet is exactly the $\Pi_1$-computable numbers of $[0,\log(\A)]$. The result of M. Hochman and T. Meyerovitch does not answer this question because their construction can use an arbitrarily large number of letters. In particular, given a $\Pi_1$-computable number $h$, their construction gives a SFT of entropy $h$ on an alphabet where the cardinal is proportional to the number of states of a Turing machine which approaches $h$ from above, but it is known that it is possible to find numbers with arbitrarily high Kolmogorov complexity in any interval, so their construction needs a huge alphabet to obtain an algorithmically complex entropy. 
 
In the last Sections (Sections~\ref{sec:simulation},~\ref{sec:consequences} and~\ref{EntropyCombined}) we consider again the two aforementioned properties parametrized by horizontal constraints, but we force the additional local rules to be vertical (in the previous sections they could have arbitrary shape). This point of view has various motivations. First, to perform an efficient computer search of the smallest aperiodic Wang tile set (reached with 11 Wang tiles in~\cite{Jeandel-Rao-2015}), it is natural to eliminate horizontal constraints that necessarily yield periodic configurations. Second, a classical result is that every effective $\Z$-subshift can be realized as the projective subaction on $\Z$ of a $\Z^2$-subshift of finite type~\cite{hochman,Aubrun-Sablik-2010,Durand-Romashchenko-Shen-2012}; however, in these constructions, the dynamic on the other direction is trivial. We can ask  which other vertical dynamics can be compatible with a given horizontal dynamic. In this article we consider a easier question: given two one dimensional subshifts of finite type $\mathcal{H}$ and $\mathcal{V}$ we ask if there exist a two dimensional configuration where every horizontal lines are in  $\mathcal{H}$ and vertical lines in $\mathcal{V}$. This interplay helps us to understand a two-dimensional transfer of information using one-dimensional constraints, and ultimately how to transfer information in view to implement computation.  This point of view is an extension of a joint conference article with N. Aubrun \cite{DPH}.

Considering only vertical patterns is more complicated that considering arbitrary patterns, because we need to understand how to transfer information with few interplay between the two directions. In that case, in the present article, we restrict ourselves to horizontal constraints given by a one-dimensional nearest-neighbor SFT $H$. If this SFT satisfies a certain set of conditions, it is possible to simulate any two-dimensional SFT, as root of the one obtained adding carefully chosen vertical constraints to the horizontal constraints of $H$ (Theorem~\ref{th:root}). With this, we obtain a characterization of the nearest-neighbor one-dimensional SFTs which have undecidable Domino Problem with this parametrization (Theorem~\ref{th:DP}). For the entropy, we obtain a partial characterization; yet surprisingly we find horizontal subshifts $H$ that can only have a decidable Domino Problem when vertical constraints are added, but that realize two-dimensional SFTs with $\Pi_1$-computable numbers as entropies.

\section{Definitions}

Any dimension of patterns in $\Z^2$ that follows will be written under the format ``width $\times$ height''.

\subsection{Symbolic Dynamics}

For a given finite set $\A$ called the alphabet, $\A^{\Z^d}$ endowed with the product topology is called the \emph{$d$-dimensional full shift} over $\A$, and is a compact space. Any $x \in \A^{\Z^d}$, called a \emph{configuration}, can be seen as a function from $\Z^d$ to $\A$ and we write $x_{\vec{k}} := x(\vec{k})$.
For any $\vec{v} \in \Z^d$ define the \emph{shift map} $\sigma^{\vec{v}}: \A^{\Z^d} \rightarrow \A^{\Z^d}$ such that $\sigma^{\vec{v}}(x)_{\vec{k}} = x_{\vec{k}-\vec{v}}$. A \emph{pattern} $p$ is a finite configuration $p \in \mathcal{A}^{P_p}$ where $P_p \subset \Z^d$ is finite. We say that a pattern $p \in \mathcal{A}^{P_p}$ \emph{appears} in a configuration $x\in\A^{\Z^d}$ -- or that $x$ \emph{contains} $p$ -- if there exists $\vec{v} \in \Z^d$ such that for every $\vec{\ell} \in P_p$, $\sigma^{\vec{v}}(x)_{\vec{\ell}} = p_{\vec{\ell}}$. We denote it $p \sqsubset x$.

A \emph{$d$-dimensional subshift} associated to a set of patterns $\mathcal{F}$, called set of \emph{forbidden patterns}, is defined by
\[
X_{\mathcal{F}} = \{ x \in \mathcal{A}^{\Z^d} \mid \forall p \in \mathcal{F}, p \not\sqsubset x \}
\]
that is, $X_{\mathcal{F}}$ is the set of all configurations that do not contain any pattern from $\mathcal{F}$.
Note that there can be several sets of forbidden patterns defining the same subshift $X$. A subshift can equivalently be defined as a subset of $\A^{\Z^d}$ that is closed under both the topology and the shift map.
If $X=X_\F$ with $\mathcal{F}$ finite, then $X$ is called a \emph{Subshift of Finite Type}, SFT for short.
For a $d$-dimensional subshift $X$, the set of all patterns of size $n_1 \times n_2 \times \dots \times n_d$ (for a usually-ordered $\Z^d$-basis) that appear in configurations of $X$ is denoted by $\mathcal{L}_X(n_1,n_2,\dots,n_d)$, and its cardinality by $N_X(n_1,n_2,\dots,n_d)$. For a one-dimensional SFT $H$, we write $\mathcal{L}_H = \cup_{n} \mathcal{L}_H(n)$, which is a language.

For one-dimensional subshifts, we talk about \emph{nearest-neighbor} SFTs if  $\F\subset\A^{\{0,1\}}$. For two-dimensional subshifts, the most well-known are the \emph{Wang shifts}, defined by a finite number of squared tiles with colored edges that must be placed matching colors called Wang tiles. Formally, these tiles are quadruplets of symbols $(t_e,t_w,t_n,t_s)$. A Wang shift is described by a finite Wang tile set, and local rules $x(i,j)_e = x(i+1,j)_w$ and $x(i,j)_n = x(i,j+1)_s$ for all integers $i,j \in \Z$.

Two subshift $X$ and $Y$ are \emph{conjugate} if there exists a continuous function from $X$ to $Y$ which commutes with the shift. Two conjugate subshift have the same dynamical properties. For example, every two-dimensional SFT is conjugate to a Wang shift, though this changes the underlying local rules and alphabet. Notably, a two-dimensional SFT is empty if and only if the corresponding Wang shift is.

\subsection{One-dimensional SFTs as graphs}

As explained in \cite{LindMarcus}, the Rauzy graph of order $M$ of a one-dimensional SFT $H \subseteq {\A_H}^\Z$ denotes the following graph $\mathcal{G}_M(H) = (\mathcal{V}, \vec{E})$:
\begin{itemize}
	\item $\mathcal{V} = \mathcal{L}_H(M)$;
	\item $(u_1\dots u_M, u_2 \dots u_{M+1}) \in \vec{E}$ for $u_1,\dots,u_{M+1} \in \A_H$ if and only if $u_1 \dots u_{M+1} \in \mathcal{L}_{H}(M+1)$;
\end{itemize}
where all the stranded vertices (with in-degree or out-degree $1$) have been iteratively removed. The edge $(u_1\dots u_M, u_2 \dots u_{M+1})$ is additionally labeled $u_{M+1}$. Up to a renaming of the symbols, this graph is unique, no matter the forbidden patterns used to describe $H$.

Note that a Rauzy graph can be made of one or several \emph{strongly connected components (SCC for short)}. We recall that  it is constituted of one unique strongly connected components if and only if the SFT associated is \emph{transitive}; that is, for any $u, w \in \mathcal{L}_H$ there exists $v \in \mathcal{L}_H$ so that $uvw \in \mathcal{L}_H$. If the Rauzy graph has several SCCs it can also contain \emph{transient vertices}, that are vertices with no path from themselves to themselves. We refer to~\cite{LindMarcus} for more details.

\begin{example}
	The subshifts $X = X_{\{10,21,11,30,31,32,33\}} \subset \{0,1,2,3\}^\Z$ and $Y = Y_{\{10,21,11\}} \subset \{0,1,2\}^\Z$ are the same SFT. They have the same Rauzy graph made of two SCCs $\{0\}$ and $\{2\}$, and one transient vertex $1$ (vertex $3$ has been deleted from $\mathcal{G}(X)$ else it would be of out-degree 0).
\end{example}

This technique that algorithmically associates a graph to an SFT will be of great use, because it means that most proofs can focus on combinatorics over graphs to describe all one-dimensional SFTs.

In all that follows, if there is no further precision, \emph{the} Rauzy graph of a SFT is of order adapted to the size of its forbidden patterns. That is, for a one-dimensional SFT with forbidden patterns of size at most~$M+1$, $\mathcal{G}(H) := \mathcal{G}_M(H)$.

\subsection{Horizontal constraints}
\label{subsec:horizontalconstraints}

Adding a dimension to a one-dimensional SFT $H$, the local rules allow to define a two-dimensional SFT where each line is a configuration of $H$ chosen independently. We want to study the consequence of adding extra rules to that subshift: the most natural way of doing so is to add forbidden patterns -- which can be two-dimensional. This is formalized in the next definition.

\begin{definition}
	Let $H\subset\A^\Z$ be a one-dimensional SFT and $\F$ be a finite set of forbidden patterns. The two-dimensional SFT
	\[
	X_{H,\F} := \{ x \in \A^{\Z^2} \mid \forall j \in \Z, (x_{k,j})_{k \in \Z} \in H \text{ and } x \in X_\F \}
	\]
	is called the subshift $X_{\F}$ \emph{with added horizontal constraints from H}. 
\end{definition}

Another point of view is to search if two one-dimensional subshifts can be combined into a two-dimensional subshift where the first one appears horizontally and the second one vertically.
Given a one-dimensional SFT, the main focus of this article is to understand when it can  be compatible with another one to build a two-dimensional SFT.

\begin{definition}
	Let $H,V \subset \A^\Z$ be two one-dimensional SFTs. The two-dimensional subshift
	\[
	X_{H,V} := \{ x \in \A^{\Z^2} \mid \forall i, j \in \Z, (x_{k,j})_{k \in \Z} \in H \text{ and } (x_{i,\ell})_{\ell \in \Z} \in V \}
	\]
	
	is called the \emph{combined subshift} of $H$ and $V$, and uses $H$ as horizontal rules and $V$ as vertical rules.
\end{definition}

\begin{remark}
    The projection of the horizontal configurations that appear in $X_{H,V}$ does not necessarily recover all of $H$; indeed, some of the configurations in $H$ will not necessarily appear because they may not be legally extended vertically.
\end{remark}

\begin{example}
 	Choose $\A = \{0,1\}$, $H$ nearest-neighbor and forbidding $00$ and $11$, and $V$ forcing to alternate a $0$ and two $1$s: the resulting $X_{H,V}$ is empty, although neither $H$ nor $V$ are. In some sense, said $H$ and $V$ are incompatible.
\end{example}

\subsection{Root of a subshift}
\label{subsec:root}

Given an SFT $X$, we want to know if it can simulate in some sense any two-dimensional subshift by adding some local rules. Of course this simulation cannot be a conjugacy since we are limited by the topological entropy of $X$; therefore we introduce the notion of root of a subshift:

\begin{definition}
The subshift $X\subset \A^{\Z^2}$ is a \emph{$(m,n)$th root} of the subshift $Y\subset\B^{\Z^2}$ if there exist a clopen $Z\subset X$ with $\sigma^{(m,0)}(Z)=Z$ and $\sigma^{(0,n)}(Z)=Z$, and an homeomorphism $\varphi\colon Z\to Y$ such that:
\begin{itemize}
\item $\varphi(\s^{(k_1m,k_2n)}(x))=\s^{(k_1,k_2)}(\varphi(x))$ for all $x\in Z$;
\item $X = \bigsqcup_{0 \leq i < m, 0 \leq j < n} \sigma^{(i,j)}(Z)$.
\end{itemize}
\end{definition}

\subsection{Entropy}

Consider a $d$-dimensional subshift $X$. The \emph{topological entropy} of $X$ is
\[h(X)=\lim_{n\to\infty}\frac{\log_2(N_X(n,\dots,n))}{n^d}=\inf_{n}\frac{\log_2(N_X(n,\dots,n))}{n^d}.\]
It is a conjugacy invariant; we refer to~\cite{LindMarcus} for details.

%
%

\section{Domino problem under horizontal constraints}
\label{sec:subsystems}

\subsection{Theorem of simulation under horizontal constraints}

\begin{proposition}
	\label{rootsubsystem}
	Let $H\subset\A^\Z$ be a one-dimensional SFT which is not made solely of periodic points. For any two-dimensional SFT $Y\subset\B^{\Z^2}$, there exists a set of forbidden patterns $\F$ such that $X_{H,\F}$ is a $(m,n)$th root of $Y$ for some $m, n \in \Z$.
\end{proposition}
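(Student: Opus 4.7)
The plan is to exploit the non-periodic structure of $H$ to build two distinguishable ``super-letters'' of equal length that can be concatenated freely, then use $\F$ to carve out macro-tiles of width $m$, each encoding one Wang tile of $Y$.

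First I would show that, because $H$ is not solely periodic, the Rauzy graph $\mathcal{G}(H)$ contains a strongly connected component that is not a simple cycle. In such an SCC some vertex $v$ has out-degree at least two, yielding two distinct cycles $A, B$ through $v$ of lengths $a, b$ whose first edges carry different labels in $\A$ (distinct out-edges from a Rauzy vertex always have distinct labels). The iterated cycles $A^b$ and $B^a$ are then two distinct loops of common length $L := ab$ at $v$, with labels $w_0, w_1 \in \A^L$ that differ at position $1$. Any bi-infinite concatenation $\cdots w_{\varepsilon_{-1}} w_{\varepsilon_0} w_{\varepsilon_1} \cdots$ with $(\varepsilon_i) \in \{0, 1\}^{\Z}$ is thus a configuration of $H$, and, given a phase $\phi \in \{0, \ldots, L-1\}$, the bit sequence can be read back from the letters of the row.

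Next, I would replace $Y$ by a conjugate Wang shift on an alphabet $\B$, fix $k := \lceil \log_2 |\B| \rceil$ and an injection $\eta \colon \B \hookrightarrow \{0, 1\}^k$, and set $m := kL$, $n := 1$. Each Wang tile $t \in \B$ is encoded as the macro-tile word $P_t := w_{\eta(t)_1} \cdots w_{\eta(t)_k} \in \A^m$, itself a loop at $v$. The set $\F$ then collects finitely many forbidden patterns of four kinds: horizontal patterns of length $2L$ absent from $\{w_0, w_1\}^{\Z}$, which force each row to be a super-letter configuration at some phase; $2L \times 2$ patterns whose two rows have distinct super-letter phases, forcing the phase to be constant across rows; horizontal patterns of length $2m$ whose decoding on the $k$-block grid is not a horizontally Wang-compatible pair $P_t P_{t'}$; and $m \times 2$ patterns aligned with that grid whose decoding yields a vertically incompatible pair. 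Letting $Z \subset X_{H, \F}$ be the clopen set of configurations whose $k$-block encoding is aligned with column $0$, the decoding map $\varphi \colon Z \to Y$ reading the Wang tile out of each macro-tile is then a shift-commuting homeomorphism, with inverse sending $y \in Y$ to the configuration whose row $j$ is the bi-infinite concatenation of $(P_{y_{i,j}})_{i \in \Z}$. Each element of $X_{H, \F}$ sits at a unique horizontal phase in $\{0, \ldots, m-1\}$, yielding the required disjoint decomposition $X_{H, \F} = \bigsqcup_{0 \leq i < m} \sigma^{(i,0)}(Z)$.

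The hard part will be the combinatorial verification that these four finite families of forbidden patterns really do force every legal configuration into the rigid macro-tile structure: specifically, that the row phase is both locally detectable from a window of length $2L$ and globally constant across rows, and that no ``parasitic'' super-letter sequence slips through the constraints. The preliminary lemma that the ``not solely periodic'' hypothesis delivers two cycles of a common length through a shared vertex with distinguishable labels also needs a careful argument about strongly connected components that fail to be simple cycles.
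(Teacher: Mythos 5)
Your overall strategy matches the paper's: use the failure of periodicity to extract two distinct cycles at a common vertex of the Rauzy graph, build equal-length super-letters from them, concatenate super-letters to encode the alphabet of a Wang shift conjugate to $Y$, and add forbidden patterns enforcing vertical alignment and the local rules of $Y$. The binary encoding with $k=\lceil\log_2|\B|\rceil$ blocks instead of one block per letter is a harmless variation. However, there is a genuine gap in your choice of super-letters. You set $w_0 = \ell(A)^b$ and $w_1 = \ell(B)^a$, both of length $L=ab$, but these words are themselves periodic: $w_0$ has period $a < L$ and $w_1$ has period $b < L$. Consequently a row consisting entirely of copies of $w_0$ (which will occur, for instance, whenever $Y$ contains a configuration with a constant row of some tile $t$ and $\eta(t)=0^k$; and for the full shift $Y=\B^{\Z^2}$ such configurations always exist) is $a$-periodic, so its decomposition into super-letters is not unique — there are $b$ equally valid phases. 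This ``parasite'' is not a verification headache you can settle by careful combinatorics; it is a structural failure. It makes the clopen $Z$ (``configurations whose $k$-block encoding is aligned with column $0$'') ill-defined, and it kills the disjointness in $X_{H,\F}=\bigsqcup_{0\le i<m}\sigma^{(i,0)}(Z)$, since an $a$-periodic configuration lies in $\sigma^{(i,0)}(Z)\cap\sigma^{(i+a,0)}(Z)$.

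The paper's construction is specifically engineered to avoid this. Its super-letters $U_k=\ell(\gamma_2\gamma_1\phi_k\gamma_1\gamma_2)$ all begin and end with $\gamma_2$, and the single interior $\gamma_2$ in $\phi_k$ is flanked by $\gamma_1$'s, so $\gamma_2\gamma_2$ occurs, at the level of the cycle decomposition, exactly at the junctions between consecutive $U_k$'s. This double-$\gamma_2$ marker makes the super-letter boundaries, and hence the phase, uniquely recoverable from a bounded window — even on a homogeneous row $U_kU_kU_k\cdots$ — and it is precisely this marker that the vertical-alignment rule ($\ell(\gamma_2\gamma_2)$ must sit above $\ell(\gamma_2\gamma_2)$) is keyed to. To repair your approach you would need to redesign $w_0,w_1$ so that every concatenation in $\{w_0,w_1\}^{\Z}$ has a unique $L$-periodic factorization, e.g.\ by prefixing and suffixing each with a fixed aperiodicity-breaking marker word in the style of the paper, rather than using pure powers $A^b$ and $B^a$.
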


\begin{proof}
	Let $Y\subset\B^{\Z^2}$ be a two-dimensional SFT. Up to renaming the symbols, suppose the alphabet $\B$ is made of letters $T_1, \dots, T_n$.
	
	Suppose $H\subset\A^\Z$ is not solely made of periodic points. Consider its Rauzy graph, $\mathcal{G}(H)$. Fix some vertex $s$ of $\mathcal{G}(H)$ so that there are at least two paths from $s$ to itself -- one such vertex does exist, else $H$ would have only periodic points. Name those paths $\gamma_1$ and $\gamma_2$.
	
	Define, for any $k$ in $\{1,\dots,n\}$,
	\begin{align*}
		U_k & := \ell(\gamma_2 \gamma_1 \phi_k \gamma_1 \gamma_2)
	\end{align*}
	where $\phi_k$ is a succession of $n$ $\gamma_1$'s, except the $k$th one, replaced by $\gamma_2$; and where $\ell(\gamma)$ designates the succession of labels of edges in a path $\gamma$, consequently being an element of $\A^*$ (the set of finite words made of elements of $\A$).
	
	All of these words $U_k$ have the same length, call it $N$, and we can juxtapose them as desired by construction of the Rauzy graph. Moreover, juxtaposing two of these words creates two consecutive $\gamma_2$'s, allowing a clear segmentation of a word written with the $U_k$'s into these basic units.
	
	Now, consider the two-dimensional extension of $H$ into $X_{H,\F}$ with $\F$ forbidding the following patterns:
	\begin{itemize}
		\item anything horizontally that is not a succession of $U_k$'s (possibly with different $k$'s);
		\item any $\ell(\gamma_2 \gamma_2)$ being above something that is not another $\ell(\gamma_2 \gamma_2)$ (this forces the $U_k$'s to be vertically aligned);
		\item any pattern $p$ of size $aN \times b$ of $U_k$'s so that, when building a pattern $q$ with $T_l$ at position $(i,j), i \in \{0,\dots,a-1\}, j \in \{0,\dots,b-1\}$ if $p_{iN,j}$ belongs to $U_l$, yields a pattern $q$ that does not appear in $Y$.
	\end{itemize}
	This is a finite number of additional forbidden patterns, since $Y$ is itself a SFT.

	It is clear, considering the clopen $Z$ of the configurations in $X_{H,\F}$ that have a $U_k$ starting at $(0,0)$ exactly, and the homeomorphism $\varphi\colon Z \rightarrow Y$ that sends a $U_k$ on a $T_k$, that $X_{H,\F}$ is a $(N,1)$th root of $Y$.
\end{proof}

\subsection{The Domino Problem under horizontal constraints}\label{section.HDominoProblem}

Define $DP(\Z^d) = \{\langle \mathcal{F} \rangle \mid X_\mathcal{F} \text{ is a nonempty SFT}\}$ where $\langle \mathcal{F} \rangle$ is an encoding of the set of forbidden patterns $\mathcal{F}$ suitable for a Turing Machine. $DP(\Z^d)$ is a language called the \emph{Domino Problem} on $\Z^d$. As for any language, we can ask if it is algorithmically decidable, i.e. recognizable by a Turing Machine. Said otherwise, is it possible to find a Turing Machine that takes as input any \emph{finite} set of patterns $\mathcal{F} \subset \mathcal{A}^{\Z^d}$ of rules and answers \texttt{YES} if $X_{\mathcal{F}}$ contains at least one configuration, and \texttt{NO} if it is empty?

It is known that $DP(\Z)$ is decidable, because the problem can be reduced to the emptiness of nearest-neighbor one-dimensional SFTs, and finding a valid configuration in a nearest-neighbor SFT is equivalent to finding a biinfinite path -- hence a cycle -- in a finite oriented (Rauzy) graph. On the contrary, $DP(\Z^2)$ is undecidable~\cite{dpberger,dprobinson,dpmozes,dpkari}, and so is any $DP(\Z^d)$ for $d \geq 2$ by reduction.

Given $H$ a one-dimensional SFT, we consider the following \emph{Domino Problem under horizontal constraints}:

\[DP_h(H) = \{\langle \mathcal{F} \rangle \mid X_{H,\mathcal{F}} \text{ is a nonempty SFT}\}.\]

The purpose of its existence is to determine the frontier between decidability ($DP(\Z)$) and undecidability ($DP(\Z^2)$). 

\begin{remark}
	This Domino Problem is defined for a given $H$, and its decidability depends on such a $H$ chosen beforehand.
\end{remark}

\begin{proposition}\label{prop:DecPeriodicPoint}
$DP_h(H)$ is decidable if and only if $H$ has only periodic points.
\end{proposition}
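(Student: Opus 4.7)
My plan is to treat the two implications separately: the direction ``$H$ has only periodic points $\Rightarrow$ $DP_h(H)$ is decidable'' by direct reduction to $DP(\Z)$, and the converse as a corollary of Proposition~\ref{rootsubsystem}.

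For the first direction, I would start by extracting a common period. The periodic biinfinite configurations of $H$ correspond to going around cycles in its finite Rauzy graph; the hypothesis forces $H$ to be a finite union of periodic orbits with periods $p_1, \ldots, p_k$. Set $P := \mathrm{lcm}(p_1, \ldots, p_k)$: every row of any $x \in X_{H,\mathcal{F}}$ is then $P$-periodic, and hence $x$ itself is horizontally $P$-periodic. I would encode $X_{H,\mathcal{F}}$ as a one-dimensional SFT $\widetilde{X}$ whose alphabet is the finite, computable set $\mathcal{B} := \{w \in \A^P \mid w^\infty \in H\}$ of admissible rows, with vertical forbidden patterns over $\mathcal{B}$ obtained as follows: for each $p \in \mathcal{F}$ of height $b$ and each horizontal offset $i \in \{0,\ldots,P-1\}$, forbid the vertical $b$-tuples $(w_0, \ldots, w_{b-1}) \in \mathcal{B}^b$ whose concatenation contains $p$ at horizontal offset $i$. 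Encoding each row of $x$ by its length-$P$ period yields a bijection between $X_{H,\mathcal{F}}$ and $\widetilde{X}$, and the decidability of $DP(\Z)$ then provides the desired decision procedure.

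For the converse, I invoke Proposition~\ref{rootsubsystem}: for any 2D SFT $Y$, one obtains a finite set $\mathcal{F}$ such that $X_{H,\mathcal{F}}$ is an $(m,n)$th root of $Y$. Since $X_{H,\mathcal{F}} = \bigsqcup_{0 \le i < m,\,0 \le j < n} \sigma^{(i,j)}(Z)$ with $\varphi\colon Z \to Y$ a homeomorphism, $X_{H,\mathcal{F}}$ is nonempty if and only if $Y$ is. The construction of $\mathcal{F}$ in that proof is uniformly computable from the forbidden patterns defining $Y$ -- locate $s$, $\gamma_1$, $\gamma_2$ in $\mathcal{G}(H)$, build the words $U_k$, and enumerate the three families of forbidden patterns -- so $Y \mapsto \mathcal{F}$ is a computable many-one reduction from $DP(\Z^2)$ to $DP_h(H)$; the undecidability of $DP(\Z^2)$ then transfers to $DP_h(H)$.

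The main obstacle lies in the first direction: I must check that $\mathcal{B}$ captures exactly the rows that can appear -- namely length-$P$ factors whose periodic extension belongs to $H$, rather than arbitrary factors of $\mathcal{L}_H(P)$ -- and that each pattern of $\mathcal{F}$ translates correctly into vertical forbidden patterns over $\mathcal{B}$ at every horizontal offset modulo $P$ (including the case where $p$ has width greater than $P$, in which case $p$ can only appear when it is itself compatible with horizontal $P$-periodicity). The undecidability direction only requires confirming the effectiveness of the construction in Proposition~\ref{rootsubsystem}, which is immediate from inspection of its proof.
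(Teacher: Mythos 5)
Your argument for the undecidability direction (nonperiodic point $\Rightarrow$ undecidable) is exactly the paper's: invoke \cref{rootsubsystem}, observe that emptiness of the root is equivalent to emptiness of $Y$, and note the construction is effective, giving a reduction from $DP(\Z^2)$. (You even have the direction of the reduction stated more carefully than the paper, which has a harmless slip saying ``reduce $DP_h(H)$ to $DP(\Z^2)$''.)

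For the decidability direction your route is genuinely different from the paper's. The paper gives a self-contained pigeonhole argument: it searches for a large valid rectangle of height $M(|\A|^{pLM}+1)$, keeps only those that can be horizontally juxtaposed with themselves, then uses pigeonhole to find a repeated sub-rectangle and splices a periodic patch tiling the plane. You instead observe that since $H$ is a finite union of periodic orbits of LCM $P$, every configuration of $X_{H,\F}$ is horizontally $P$-periodic, and encode each row by its length-$P$ period over the computable alphabet $\mathcal{B}=\{w\in\A^P\mid w^\infty\in H\}$. Pushing the patterns of $\F$ through this encoding (at each of the $P$ horizontal offsets, with wide or non-rectangular patterns handled by checking against the periodic extension) produces a one-dimensional SFT $\widetilde{X}$ with $X_{H,\F}$ nonempty iff $\widetilde{X}$ is, so decidability follows from decidability of $DP(\Z)$. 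Both proofs are sound, and in fact the paper's pigeonhole argument is implicitly re-proving decidability of the 1D Domino Problem in this special case; your version isolates that as a black box, which is more modular and arguably easier to verify. One small remark: for an SFT, $\mathcal{B}$ coincides with $\mathcal{L}_H(P)$ once one knows every point of $H$ is $P$-periodic (any length-$P$ factor $w$ of a $P$-periodic $y\in H$ satisfies $y=w^\infty$), so the distinction you flag as an ``obstacle'' dissolves; the step that does need (and gets) justification is that the Rauzy graph with stranded vertices removed is a disjoint union of simple cycles, which is what yields the common period $P$ in the first place.
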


\begin{proof}
If $H \subset \A^\Z$ has a nonperiodic point, then for any two-dimensional SFT $Y$ we can apply \cref{rootsubsystem} and build a two-dimensional SFT $X_{H,\F}$ so that it is a root of $Y$. Using the definition of a root of a SFT, it is clear that the emptiness of $X_{H,\F}$ is equivalent to the emptiness of $Y$. Consequently, we can reduce $DP_h(H)$ to $DP(\Z^2)$, and $DP_h(H)$ is undecidable.

If $H \subset \A^\Z$ only has periodic points, suppose these points' smallest common period is some integer $p > 0$. Now, take as input a finite set of forbidden patterns $\F$, all of them as a rectangle of size $pL \times M, L,M\in \N$ up to extending them.

If there is no rectangle of size $pL \times M(|\A|^{pLM}+1)$ respecting the local rules of $X_{H,\F}$, then that SFT is empty.
If there is at least one such rectangle, list all of these possible candidates. Consider $R$ a candidate: either it can be horizontally juxtaposed with itself without containing a pattern of $\F$, and we keep it; or it cannot, and we delete it. If all candidates are deleted, then $X_{H,\F}$ is empty, because $H$ forces a $p$-periodic repetition horizontally in any configuration, which happens to be incompatible with all candidate rectangles.

If at least one candidate $R$ remains, then by the pigeonhole principle it contains at least twice the same rectangle $R^\prime$ of size $pL \times M$. To simplify the writing, we assume that the rectangle that repeats is the one with coordinates $[1,pL] \times [1,M]$ inside $R$ where $[1,pL]$ and $[1,M]$ are intervals of integers, and that it can be found again with coordinates $[1,pL] \times [k,k+M-1]$. Else, we simply truncate a part of $R$ so that this becomes true.

Define $P := R|_{[1,pL] \times [1,k+M-1]}$.
Since $\F$ has forbidden patterns of size $pL \times M$, and since $R$ respects our local rules and begins and ends with $R^{\prime}$, $P$ can be vertically juxtaposed with itself (overlapping on $R^\prime$). Moreover, $P$ can be horizontally glued with itself. The result tiles $\Z^2$ periodically while respecting the constraints of $H$ and of $\F$, since any $pL \times M$ rectangle found in it is already present in the horizontal juxtaposition of $P$, which is valid by $p$-periodicity.
Therefore, $X_{H,\F}$ is nonempty.

With this, we have an algorithm to decide the emptiness of $X_{H,\F}$ for any input $\F$.
\end{proof}

%
%

\section{Characterization of the possible entropies under horizontal constraints}\label{section:EntropyHorizontalConstraint}

The purpose of this section is to characterize the entropies accessible to two-dimensional SFTs, as in~\cite{HM}, but under projective constraints. Formally, given $H$ a one-dimensional SFT, we want to characterize the set
 \[\left\{h(X_{H,\F}):\mathcal{F} \textrm{ set of forbidden pattens}\right\}.\]
 
Clearly it is a subset of $[0,h(H)]$ and by~\cite{desai} it is dense in that interval. The computability obstruction of~\cite{HM} says that it is a subset of the $\Pi_1$-computable numbers (also named \emph{right-recursively enumerable} or r.r.e. numbers). We recall that a real is $\Pi_1$-computable if there exists a decreasing computable sequence of rationals which converges toward that number. Therefore, it is natural to ask if all $\Pi_1$-computable numbers of  $[0,h(H)]$ can be obtained. We have the following result  proved in \cref{subsec:mainresult}.

\begin{restatable*}{theorem}{RealizationEntropy}
	\label{th:RealizationEntropy}
	Let $H \subset \A^\Z$ be a SFT. We have
	\[\left\{h(X_{H,\mathcal{F}}): \mathcal{F}\textrm{ set of forbiden patterns}\right\}=[0,h(H)]\cap\Pi_1\]
	where $\Pi_1$ is the set of right-recursively enumerable reals.
\end{restatable*}

\subsection{Kolmogorov complexity and number of tiles}

Given $r$ a $\Pi_1$-computable real, denote $K(r)$ its \define{Kolmogorov complexity}. It is the minimal number of states needed for a Turing Machine to enumerate a list of rationals which approach $r$ from above.
Consider the algorithm from \cite[Alg. 7.3]{HM}, defined for a given r.r.e. real $h$, that takes as input a sequence $(x_N)_{N \in \Z} \in \{0,1\}^\Z$, and makes its frequency of $1$'s -- on specific indices -- approach $r$ from above. The associated Turing Machine is built from the one that approaches $h$ from above so that its number of states is of the form $cK(h)$, $c>0$ not depending on $h$. The authors in \cite{HM} turn that Turing Machine into a Wang tile set. Since the size of that tile set depends linearly on the number of states of the Turing Machine, we obtain the following:

\begin{theorem}[From \cite{HM}, Alg. 7.3]
	\label{prop:algo}
	There exists $C > 0$ such that for any $h \in \Pi_1$, there exists a two-dimensional SFT $X$ such that $h(X) = h$, describable by a set of at most $CK(h)$ Wang tiles.
\end{theorem}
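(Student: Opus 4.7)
The plan is to derive the statement essentially as a quantitative reading of the Hochman--Meyerovitch construction, rather than redoing the construction. The core observation is that the construction of~\cite{HM} is parametric in the Turing machine $M$ that enumerates a decreasing sequence of rationals converging to $h$ from above, and the number of tiles produced splits into a ``universal part'' (independent of $h$) and a ``machine part'' that scales linearly with the size of $M$.

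First, I would invoke the definition of Kolmogorov complexity: by definition of $K(h)$ there exists a Turing machine $M_h$ with at most $K(h)$ states that enumerates a decreasing sequence of rationals $(q_n)_{n \in \N}$ with $q_n \to h$. I would then feed $M_h$ into Algorithm~7.3 of~\cite{HM}. That algorithm produces a two-dimensional SFT $X$ with $h(X) = h$, whose tile set is constructed by combining:
\begin{itemize}
    \item a self-similar hierarchical substrate (a fixed Robinson-like structure with a constant number of tiles, independent of $h$), that organizes the plane into nested squares on which larger and larger computations can be simulated;
    \item the encoding of $M_h$ as a Wang tile set that runs the computation inside those squares (this encoding is the standard one, producing a number of tiles linear in the number of states of $M_h$);
    \item a fixed ``entropy layer'' of constant size that contributes the free bits whose density is controlled by the output of $M_h$, so that the entropy converges to the enumerated value $h$.
\end{itemize}

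Second, I would check that each of the three ingredients above costs at most $c_i K(h) + c'_i$ tiles with $c_i, c'_i$ absolute constants, and therefore that the total number of tiles in $X$ is bounded by $c_1 K(h) + c_2$ for constants independent of $h$. Absorbing the additive constant (using $K(h) \geq 1$ for any non-trivial $h$, and treating the trivial case $h=0$ separately with a single-tile SFT) yields a bound of the form $C K(h)$ for a universal $C > 0$.

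The main obstacle is the second step: one must re-examine the construction in~\cite{HM} to confirm that the dependence on the Turing machine really is linear (and not, say, polynomial through some intermediate encoding). This is where a careful look at how $M_h$ is turned into Wang tiles is needed; once this is verified, the rest of the argument is a direct application of the construction together with the definition of $K(h)$.
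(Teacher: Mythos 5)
Your argument is essentially the same as the paper's: both take the machine $M_h$ with $K(h)$ states given by the definition of Kolmogorov complexity, feed it into Algorithm~7.3 of~\cite{HM}, and observe that the tile set produced splits into a universal part plus a part that scales linearly with the number of states of the machine, so the total is $O(K(h))$. The paper is slightly terser (it simply asserts that the associated machine has $cK(h)$ states and that the tileset size is linear in the state count), but your decomposition into substrate, machine encoding, and entropy layer is just a more explicit reading of the same linearity claim, and your remark that the linear dependence must be verified against~\cite{HM} is a caveat the paper silently relies on as well.
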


\subsection{Technical lemmas on entropy}

\begin{lemma}
	\label{lemma:entropy}
	For all $\alpha, \beta \in \N$, one has:
	\[\lim\limits_{n \to +\infty} \dfrac{\log_2(N_X(\alpha n, \beta n))}{\alpha \beta n^2} = h(X).\]
\end{lemma}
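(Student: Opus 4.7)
The plan is to sandwich the ratio $\frac{\log_2 N_X(\alpha n,\beta n)}{\alpha\beta n^2}$ between two sequences that both converge to $h(X)$, using the standard subadditivity of the pattern-counting function. The only tool needed is the elementary inequality $N_X(a+b,c)\leq N_X(a,c)\cdot N_X(b,c)$, together with its analogue in the second coordinate: any $(a+b)\times c$ pattern in $\mathcal{L}_X$ is determined by its two halves (which lie in $\mathcal{L}_X(a,c)$ and $\mathcal{L}_X(b,c)$ respectively), so the number of large patterns is at most the product of the numbers of the small ones.

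For the upper bound, I would partition an $\alpha n\times\beta n$ rectangle into an $\alpha\times\beta$ array of $n\times n$ squares and iterate the subadditivity inequality in both directions. This gives
\[
N_X(\alpha n,\beta n)\;\leq\;N_X(n,n)^{\alpha\beta}.
\]
Taking $\log_2$, dividing by $\alpha\beta n^2$, and letting $n\to\infty$ yields $\limsup_n \frac{\log_2 N_X(\alpha n,\beta n)}{\alpha\beta n^2}\leq h(X)$, using the standard definition of $h(X)$ as the limit of $\frac{\log_2 N_X(n,n)}{n^2}$.

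For the matching lower bound, I would perform the dual partition: an $\alpha\beta n\times\alpha\beta n$ square splits cleanly into a $\beta\times\alpha$ array of $\alpha n\times\beta n$ rectangles (a total of $\alpha\beta$ rectangles), so
\[
N_X(\alpha\beta n,\alpha\beta n)\;\leq\;N_X(\alpha n,\beta n)^{\alpha\beta}.
\]
Taking $\log_2$ and dividing by $(\alpha\beta n)^2=\alpha^2\beta^2 n^2$ gives
\[
\frac{\log_2 N_X(\alpha\beta n,\alpha\beta n)}{(\alpha\beta n)^2}\;\leq\;\frac{\log_2 N_X(\alpha n,\beta n)}{\alpha\beta n^2}.
\]
The left-hand side is a subsequence of $\frac{\log_2 N_X(k,k)}{k^2}$ (with $k=\alpha\beta n$) and therefore tends to $h(X)$, yielding $h(X)\leq\liminf_n \frac{\log_2 N_X(\alpha n,\beta n)}{\alpha\beta n^2}$.

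Combining the two bounds gives the desired equality. There is no real obstacle here; the argument succeeds precisely because $\alpha n\times\beta n$ and $n\times n$ are commensurable through the common scale $\alpha\beta n$, which lets one tile in both directions and match the normalizations exactly. The $\alpha\beta$ factor in the denominator on the left of the statement is exactly what is needed so that the grid-partition exponent and the area normalization cancel.
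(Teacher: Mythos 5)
Your argument is essentially identical to the paper's: both use the subadditivity bound $N_X(\alpha n,\beta n)\leq N_X(n,n)^{\alpha\beta}$ for the upper estimate, the dual bound $N_X(\alpha\beta n,\alpha\beta n)\leq N_X(\alpha n,\beta n)^{\alpha\beta}$ for the lower estimate, and the subsequence observation to squeeze the middle term. The only difference is presentational — you phrase the two directions explicitly as $\limsup$ and $\liminf$ bounds, which is a slightly cleaner way to write up the same squeeze.
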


\begin{proof}
	We have the inequality
	\[N_X(\alpha n, \beta n) \leq (N_X(n, n))^{\alpha \beta}\]
	due to the fact that globally admissible patterns -- patterns that belong to a valid configuration -- of size $\alpha n \times \beta n$ are themselves made of a number $\alpha \beta$ of globally admissible patterns of size $n \times n$.
	
	We also have:
	\[
	(N_X(\alpha \beta n, \alpha \beta n)) \leq N_X(\alpha n, \beta n)^{\alpha \beta}
	\]
	with the same reasoning.
	
	Finally, we have
	\[\lim_n \dfrac{\log_2(N_X(\alpha \beta n, \alpha \beta n))}{\alpha^2 \beta^2 n^2} = h(X)\]
	because $(\frac{\log_2(N_X(\alpha \beta n, \alpha \beta n))}{\alpha^2 \beta^2n^2})$ is a subsequence of the converging sequence $(\frac{\log_2(N_X(n, n))}{n^2})$.
	
	Then applying $\lim_n \dfrac{\log_2(.)}{\alpha \beta n^2}$ to
	\begin{align*}
	(N_X(\alpha \beta n, \alpha \beta n))^{\frac{1}{\alpha \beta}} \leq N_X(\alpha n, \beta n) \leq (N_X(n, n))^{\alpha \beta}
	\end{align*}
	we obtain what we want.
\end{proof}

\begin{proposition}
	\label{rootentropy}
Let $X$ be a two-dimensional subshift which is a $(m,n)$th root of  the subshift $Y$. Then
\[h(Y)=mn\,h(X)\]
\end{proposition}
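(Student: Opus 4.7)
My approach is to compare the growth of pattern counts on $X$-rectangles of size $Mm \times Mn$ with that on $Y$-rectangles of size $M \times M$, and then apply \cref{lemma:entropy} to absorb the factor $mn$. Applied with $(\alpha,\beta) = (m,n)$, that lemma gives
\[
h(X) \;=\; \lim_{M \to \infty} \frac{\log_2 N_X(Mm, Mn)}{mn\, M^{2}},
\]
so it suffices to show that $\lim_{M \to \infty} \log_2 N_X(Mm, Mn)/M^{2} = h(Y)$.

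The key tool is that the homeomorphism $\varphi\colon Z \to Y$ is a sliding block code once the $(m,n)$-rescaling is taken into account. Since $Z$ is clopen in $X$, by continuity $\varphi(z)_{(0,0)}$ depends only on the restriction of $z$ to some finite window $W \subset \Z^{2}$; combining this with the intertwining relation $\varphi \circ \sigma^{(k_1 m, k_2 n)} = \sigma^{(k_1, k_2)} \circ \varphi$, one sees that $\varphi(z)_{(k,\ell)}$ is determined by $z|_{W + (km,\ell n)}$. A symmetric argument shows that $\varphi^{-1}\colon Y \to Z$ is likewise a block code with respect to the rescaling, with some finite window of radius $s$ in $Y$-coordinates.

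From these local rules I would derive two counting bounds. First, since $\varphi$ is surjective and its local rule turns a $Z$-pattern of side $(M+2r)m \times (M+2r)n$ into an $M \times M$ pattern of $Y$, one gets $N_Y(M,M) \le N_X\bigl((M+2r)m,\,(M+2r)n\bigr)$. Second, any $X$-pattern at the origin of size $Mm \times Mn$ lies in some $\sigma^{(i,j)}(Z)$ by the disjoint-union clause of the root definition, and is therefore recovered from the pair (offset $(i,j) \in [0,m) \times [0,n)$, $Y$-pattern of size $M + C$ obtained by applying $\varphi$ and then decoding with $\varphi^{-1}$ on a slightly enlarged box), yielding $N_X(Mm, Mn) \le mn \cdot N_Y(M+C, M+C)$ for a constant $C$ depending only on $m, n$ and the local-rule radii $r, s$.

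Dividing both bounds by $M^{2}$ and letting $M \to \infty$, the boundary shifts $r, C$ wash out since $(M+C)^{2}/M^{2} \to 1$ and $\log_2(mn)/M^{2} \to 0$; monotonicity of $N_X$ in each coordinate (extension of patterns) lets the two inequalities sandwich the common limit. Combined with \cref{lemma:entropy}, this gives $mn\,h(X) \le h(Y) \le mn\,h(X)$, hence the equality. The one genuinely delicate step is establishing the rescaled-block-code description of $\varphi$ and $\varphi^{-1}$ with the correct alignment and tracking the finite radii $r, s, C$; once they are known to be finite, their contribution is quadratically negligible against the $M^{2}$ normalization, and the limit computation itself is a routine application of \cref{lemma:entropy}.
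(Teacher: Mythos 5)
Your proposal is correct and follows essentially the same route as the paper's own proof: realize $\varphi$ and $\varphi^{-1}$ as rescaled sliding block codes with finite radii (Hedlund), use the disjoint-union clause $X = \bigsqcup_{i,j}\sigma^{(i,j)}(Z)$ to pick up the factor $mn$ and deal with the offset, sandwich $N_X(km,kn)$ between constants times $N_Y$ of nearby sizes, and apply \cref{lemma:entropy} with $(\alpha,\beta)=(m,n)$ so the boundary terms vanish in the $k^2$-normalized limit. The only cosmetic difference is that you absorb the border of the local-rule window by enlarging the box and invoking monotonicity of $N_X$, whereas the paper keeps the box size $mk\times nk$ and pays an explicit $|\A_Z|^{2rk(m+n)}$ factor, but both are then killed by the same limit.
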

\begin{proof}
	Call $Z \subset X$ the clopen homeomorphic to $Y$. In what follows, we write $\mathcal{L}^{(k_1,k_2)}_{C}(a,b)$ the set of $[k_1,k_1+a-1] \times [k_2,k_2+b-1]$ patterns that appear in configurations of a clopen $C$; and $N^{(k_1,k_2)}_{C}(a,b)$ its cardinal. Note that $\mathcal{L}^{(0,0)}_{X}(a,b) = \mathcal{L}_{X}(a,b)$ since $X$ is shift-invariant; and same for $Y$.
	
Consider $\phi:Z\to Y$ the homeomorphism such that $\phi(\sigma^{(k_1m,k_2n)}(x))=\sigma^{(k_1,k_2)}\phi(x)$ for all $x\in Z$ and $(k_1,k_2)\in\Z^2$. By the same proof as Hedlund's theorem, there exist $r \in \N_0$ and a local map $\overline{\phi}$ applied on patterns of support $[-r,r]^2$ such that for any $x \in Z$, $\phi(x)_{(k_1,k_2)}=\overline{\phi}(x_{(k_1 m,k_2 n)+[-r,r]^2})$ -- that is, $\phi$ can be considered as the application of a local map uniformly on patterns of configurations of $Z$. Thus $|\mathcal{L}^{(0,0)}_Y(k, k)|\leq|\mathcal{L}^{(-r,-r)}_Z(mk+2r, nk+2r)|$. Furthermore, since a pattern of support $[-r,mk+r-1]\times[-r,nk+r-1]$ can be decomposed into a pattern of support $[0,mk-1]\times[0,nk-1]$ and its border, we deduce that $|\mathcal{L}^{(0,0)}_Y(k, k)|\leq|\A_Z|^{2rk(m+n)} |\mathcal{L}^{(0,0)}_Z(mk, nk)|$.

In the same way, there exist $r' \in \N_0$ and a local map $\overline{\phi^{-1}}$ applied on patterns of support $[-r',r']^2$ such that $\phi^{-1}(y)_{(k_1m,k_2n)+[0,m-1]\times[0,n-1]}=\overline{\phi^{-1}}(y_{(k_1,k_2 )+[-r',r']^2})$. Thus $|\mathcal{L}^{(0,0)}_Z(mk,n k)|\leq|\mathcal{L}^{(-r',-r')}_Y(k+2r', k+2r')|$

We have the same bounds for any $\sigma^{(i,j)}(Z)$ with $i,j\in\Z$. Moreover, since $X = \bigsqcup_{0 \leq i < m, 0 \leq j < n} \sigma^{(i,j)}(Z)$, one has $N_X(mk, nk) = \sum_{0 \leq i < m, 0 \leq j < n} N^{(0,0)}_{\sigma^{(i,j)}(Z)}(mk, nk)$. We deduce the following inequalities
\[
\frac{mn}{|\A_Z|^{2rk(m+n)}}N_Y(k,k)\leq N_X(mk, nk)\leq mn N_Y(k+2r', k+2r')
\]

	We apply $\frac{\log_2(.)}{k^2}$ to these inequalities and by using \cref{lemma:entropy}, we get:
	\[
	h(Y) = mn h(X).
	\]
	
\end{proof}

\begin{lemma}
	\label{lemlem}
	Let $H$ be a transitive one-dimensional SFT of positive entropy and $h < h(H)$.
	
	There exist words $u, w_1, w_2 \in \mathcal{L}_H$ and $\alpha \in \N$ such that:
	\begin{itemize}
		\item $\alpha > M$ the biggest size of forbidden patterns of $H$;
		\item $|w_1|=|w_2|=\alpha$;
		\item $u, w_1$ and $w_2$ are cycles from a vertex of the Rauzy graph of $H$ to the same vertex;
		\item $uww_1 \in \mathcal{L}_H$ for all $w \in \{w_1,w_2\}^*$;
		\item $u$ appears as a subword of any word in $u\{w_1,w_2\}^*w_1$ at the very beginning only;
		\item $h(H_u) > h$ where $H_u$ is the SFT included in $H$ where the word $u$ does not appear.
	\end{itemize}

	Moreover, the Rauzy graph of $H_u$ is still transitive.
\end{lemma}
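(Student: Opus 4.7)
Since $H$ is transitive with positive entropy, its Rauzy graph $\mathcal{G}(H)$ is strongly connected and contains more than a single cycle; by Perron--Frobenius, the number of cycles of length $n$ through any fixed vertex grows like $2^{nh(H)}$ up to subexponential factors. I would fix a vertex $v$ of $\mathcal{G}(H)$ and let $p$ denote the gcd of the cycle lengths through $v$. The plan is to choose $\alpha$ to be a large multiple of $p$ with $\alpha > M$ and $\alpha \cdot h(H) > 1$, and to pick two distinct cycles $w_1 \neq w_2$ of length $\alpha$ through $v$. The validity of $uww_1 \in \mathcal{L}_H$ for every $w \in \{w_1, w_2\}^*$ is then automatic: any such concatenation is the label sequence of a well-defined path in $\mathcal{G}(H)$.

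The heart of the proof will be the selection of $u$. I would take $u$ to be a cycle of length $L$ through $v$ (with $L$ a large multiple of $p$) satisfying three conditions: (i) $u$ is not a factor of any word in $\{w_1, w_2\}^* w_1$; (ii) $u$ has no nontrivial border, that is, no proper nonempty prefix equal to a proper suffix; (iii) $h(H_u) > h$. Properties (i) and (ii) together imply the unique-occurrence requirement, because a second occurrence of $u$ at position $i > 0$ in $uww_1$ either lies entirely inside $ww_1$ (excluded by (i)) or overlaps the initial copy, in which case $u$ must admit a nontrivial border matching a prefix of $ww_1$ (excluded by (ii)).

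Existence of such $u$ will follow from a counting argument. The number of distinct length-$L$ factors of words in $\{w_1, w_2\}^* w_1$ is at most $\alpha \cdot 2^{\lceil L/\alpha \rceil + 1} = o(2^{Lh(H)})$ by the choice of $\alpha$; the number of length-$L$ bordered words over $\A$ is $o(|\A|^L)$, and hence $o(2^{Lh(H)})$ when restricted to cycles at $v$; and the standard estimate $h(H_u) \geq h(H) - O(L^{-1}\log L)$, uniform in $u \in \mathcal{L}_H(L)$, implies (iii) for $L$ large enough. Since these three ``bad'' sets are asymptotically negligible compared to the $\Theta(2^{Lh(H)})$ cycles of length $L$ at $v$, a suitable $u$ must exist.

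Finally, for the transitivity of $\mathcal{G}(H_u)$, I observe that $|u| > \alpha$ together with (i) implies $u$ is not a factor of $w_1^K$ for any $K$; given $a, b \in \mathcal{L}_{H_u}$, one uses transitivity of $H$ to extend $a$ and $b$ to reach $v$ via short $u$-free segments, and interpolates a long buffer $w_1^K$. The main technical obstacle will be the uniform entropy estimate in (iii): the claim $h(H_u) \geq h(H) - O(L^{-1}\log L)$ uniformly over $u \in \mathcal{L}_H(L)$. Intuitively it holds because a fixed length-$L$ word has density $\approx 2^{-Lh(H)}$ in typical configurations, so forbidding it removes only an exponentially small fraction of the language; but establishing uniformity over all $u$ typically requires a careful transfer-matrix or automaton-based analysis of $u$-avoiding configurations.
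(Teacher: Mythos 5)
Your overall strategy is a genuinely different one from the paper's: the paper constructs $u$, $w_1$, $w_2$ explicitly as interleavings of two fixed return paths $\gamma_1,\gamma_2$ at a vertex $s$, chosen so that every unit begins and ends with $\gamma_2$ — which makes the double-$\gamma_2$ marker at each juxtaposition boundary force the unique segmentation and the ``$u$ only at the start'' property by construction — whereas you try to obtain a suitable $u$ by a counting/probabilistic argument. The explicit route has the advantage that all the combinatorial properties come for free; the counting route would, if it worked, be more flexible, but as written it has a real gap.

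The gap is in the count of bordered cycles at $v$. You assert that ``the number of length-$L$ bordered words over $\A$ is $o(|\A|^L)$, and hence $o(2^{Lh(H)})$ when restricted to cycles at $v$.'' The ``hence'' is a non sequitur: being a vanishing fraction of \emph{all} length-$L$ words over $\A$ says nothing about what fraction of the (much smaller, size $\Theta(\lambda^L)$ with $\lambda=2^{h(H)}$) set of length-$L$ cycles at $v$ is bordered. The natural bound on the number of bordered cycles, namely that a cycle with a border of length $k$ is determined by its first $L-k$ symbols, gives at most $\sum_{k\ge 1} P_v(L-k) \asymp \frac{c'}{\lambda-1}\lambda^L$ where $P_v(m)$ is the number of walks of length $m$ from $v$. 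This is the \emph{same} order $\lambda^L$ as the total count of cycles, with a constant that is not obviously less than one — especially when $\lambda$ is close to $1$. So the bordered set is not automatically negligible, and your union bound does not close. You would need either a sharper bound on bordered cycles in an SFT (e.g.\ via a renewal/inclusion–exclusion argument) or, more in the spirit of the paper, a concrete unbordered-by-construction $u$. Note that the unborderedness requirement is also stronger than what is actually needed: only overlaps consistent with the specific prefixes $w_1$, $w_2$ have to be excluded, so a direct construction can sidestep the counting entirely.

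Two secondary points. First, the transitivity of $H_u$ is only sketched: ``extend $a$ and $b$ to reach $v$ via short $u$-free segments'' glosses over the possibility that the shortest extension of $a$ toward $v$ in $\mathcal{G}(H)$ \emph{creates} an occurrence of $u$; the paper's proof explicitly modifies the extension in that case, and your argument needs a comparable step. Second, your $u$ has length $L$ generically different from $\alpha=|w_1|=|w_2|$. The lemma as stated does not require $|u|=\alpha$, so this is not an error in proving the lemma as written, but the subsequent Lemma~3.5 (\texttt{magiclemma}) immediately sets ``$\alpha=|u|=|w_1|$,'' and the paper's construction delivers exactly that. Choosing $L$ to be a multiple of $\alpha$ (or simply $L=\alpha$ once $\alpha$ is large enough) would remove the mismatch at no cost.
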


\begin{proof}According to \cite[Th.3]{lind}, for any integer $n$ that is big enough, any $n$-long word $u$ is so that, if we denote $H_u$ the SFT where $u$ is added to the forbidden patterns of $H$, we get
	\[
	h(H) \geq h(H_u)>h
	\]
	because $h(H_u)$ is below yet sufficiently close to $h(H)$ for $n$ big enough.
	As a consequence, the last property we have to find is verified as soon as we choose $\alpha$ big enough for the three words we want.
	
	Now, consider the Rauzy graph of $H$, call it $\mathcal{G}(H)$. Fix some vertex $s$ of $\mathcal{G}(H)$. Since its entropy is positive and it is transitive, for any vertex $s$ of $\mathcal{G}(H)$ there are at least two paths from $s$ to itself that do not pass through $s$ at any other moment. Name those paths $\gamma_1$ and $\gamma_2$.
	
	Define
	\begin{align*}
	u & := \ell(\gamma_2 \gamma_1 \gamma_2 \gamma_1 \gamma_1 \gamma_1 \gamma_2 {\gamma_1}^k \gamma_2)\\
	w_1 & := \ell(\gamma_2 \gamma_1 \gamma_1 \gamma_2 \gamma_1 \gamma_1 \gamma_2 {\gamma_1}^k \gamma_2)\\
	w_2 & := \ell(\gamma_2 \gamma_1 \gamma_1 \gamma_1 \gamma_2 \gamma_1 \gamma_2 {\gamma_1}^k \gamma_2)\\
	\end{align*}
	where $\ell(\gamma)$ designates the succession of labels of edges in $\gamma$, consequently being an element of $\A^*$. Choose $k > 0$ big enough so that $u$ satisfies what we desire on $h(H_u)$. These three words have the same length and we can juxtapose them as desired by construction of the Rauzy graph of $H$. Moreover, juxtaposing any of those words creates two consecutive $\gamma_2$, allowing a clear segmentation of a word written with $u$, $w_1$ and $w_2$ into these basic units. Notably, $u$ appears as a subword of any word in $u\{w_1,w_2\}^*w_1$ at the very beginning only.
	
	Moreover, $H_u$ is still transitive. Indeed, let $v_1$ and $v_2$ be two elements of $\mathcal{L}_{H_u}$; we can suppose they are bigger than $u$ up to an extension of $v_1$ to its left and $v_2$ to its right.
	
	$v_1$ can be seen as a path in $\mathcal{G}(H)$ and extended to $v_1w$ so that the corresponding path in $\mathcal{G}(H)$ reaches $s$, by transitivity of that graph, with the shortest possible $w$.
	It is possible that $v_1w \notin \mathcal{L}_{H_u}$, meaning it contains $u$ as a subword. But then $u$ cannot be a subword of $w$, since the path in $\mathcal{G}(H)$ corresponding to $u$ is a succession of loops from $s$ to $s$; and $u$ cannot be a subword of $v_1$, since $v_1 \in \mathcal{L}(H_u)$. Therefore $u$ begins in $v_1$ and ends in $w$. Since $w$ corresponds by definition to the shortest path to $s$ possible in $\mathcal{G}(H)$, it means most of $u$ is in $v_1$ and only a fraction of the last $\ell(\gamma_2)$ is in $w$. Then change $w$ for a new $w'$ as short as possible that breaks the completion of $u$: this is doable because $v_1 \in \mathcal{L}_{H_u}$, so it is extendable to the right. Then add the shortest possible $w''$ that goes to $s$ in $\mathcal{G}(H)$: $u$ is too large to appear elsewhere in $v_1w'w''$. Rename $w'w''$ as $w$.
	
	In all cases, we found $v_1w \in \mathcal{L}_{H_u}$ that reaches $s$ when considered in $\mathcal{G}(H)$. Do the same so that some $xv_2 \in \mathcal{L}_{H_u}$ starts from $s$ when considered in $\mathcal{G}(H)$.
	
	Now, $v_1wxv_2 \in \mathcal{L}_{H}$ since $v_1$ and $v_2$ are large enough so that the $wx$ part of this word has no effect on its extension to a biinfinite word in $H_u$.
	Furthermore, $v_1wxv_2$ does not contain $u$, since the latter must start from $s$ and end at $s$, and this word has been built so that no $u$ follows when $s$ appears. Since $v_1$ and $v_2$ are also large enough to be extended  respectively to the left and to the right without a $u$, we can conclude that $v_1wxv_2 \in \mathcal{L}_{H_u}$. Hence $\mathcal{G}(H_u)$ is transitive.
\end{proof}

\begin{lemma}
	\label{lembezout}
	Let $c_1, \dots, c_l$ be positive integers.
	Let $m = GCD(c_1,\dots,c_l)$.
	
	There is a rank $N \in \N$ such that for all $n \geq N$, $nm$ can be expressed as some $\sum_{i=1}^l k_ic_i$ with $k_i \in \N$.
\end{lemma}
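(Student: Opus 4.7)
\medskip

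\noindent\textbf{Proof plan.} This is the generalized Chicken McNugget / Frobenius--Sylvester statement. The plan is to reduce to the coprime case and then use Bézout combined with the standard residue argument.

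\medskip

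\noindent\textbf{Step 1 (reduction).} First I would write $c_i = m c_i'$ with $c_i' \in \N$ and $\gcd(c_1', \dots, c_l') = 1$. Since $\sum_{i=1}^l k_i c_i = m \sum_{i=1}^l k_i c_i'$, writing $nm = \sum_i k_i c_i$ with $k_i \in \N$ is equivalent to writing $n = \sum_i k_i c_i'$ with $k_i \in \N$. So it suffices to prove the statement when $m = 1$: every sufficiently large $n \in \N$ is a nonnegative integer combination of $c_1', \dots, c_l'$.

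\medskip

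\noindent\textbf{Step 2 (residue cover).} For the reduced problem I would fix the modulus $c_1'$ and show that every residue class modulo $c_1'$ is realized by some nonnegative integer combination of $c_1', \dots, c_l'$. By Bézout, the fact that $\gcd(c_1', \dots, c_l') = 1$ gives integers $a_1, \dots, a_l \in \Z$ with $\sum_i a_i c_i' = 1$. Splitting $a_i = b_i - d_i$ with $b_i, d_i \in \N$ and setting $S := \sum_i d_i c_i' \in \N$, one gets $\sum_i b_i c_i' = S + 1$. Multiplying this identity by $r \in \{0, 1, \dots, c_1'-1\}$ produces a nonnegative combination $n_r := r \sum_i b_i c_i' = r(S+1)$ of the $c_i'$ that is congruent to $r$ modulo $c_1'$ (using that $r(S+1) \equiv r \cdot 1 = r \pmod{c_1'}$ via the Bézout identity taken mod $c_1'$). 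Thus every residue class mod $c_1'$ is hit by a nonnegative combination.

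\medskip

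\noindent\textbf{Step 3 (conclusion).} Set $N_0 := \max_{0 \le r < c_1'} n_r$. For any $n \ge N_0$, let $r \in \{0, \dots, c_1'-1\}$ be the residue of $n$ modulo $c_1'$. Then $n - n_r$ is a nonnegative multiple of $c_1'$, so
\[
n = n_r + \frac{n - n_r}{c_1'} \cdot c_1'
\]
is a nonnegative integer combination of $c_1', \dots, c_l'$. Taking $N := N_0$ in the reduced problem yields the lemma: for all $n \ge N$, the integer $nm$ can be written as $\sum_i k_i c_i$ with $k_i \in \N$.

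\medskip

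\noindent\textbf{Obstacle.} There is no substantial obstacle; the only subtlety is exhibiting a nonnegative combination representing each residue class modulo $c_1'$, which is handled by the splitting $a_i = b_i - d_i$ of the Bézout coefficients. If one prefers, Step~2 could be replaced by an induction on $l$ using the two-variable case (the classical Sylvester--Frobenius bound), but the Bézout-based approach above gives a self-contained argument.
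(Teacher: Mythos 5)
Your strategy---reduce to the coprime case and cover all residue classes modulo $c_1'$ by nonnegative combinations---is a valid classical route and genuinely different from the paper's, which works directly with $nm = qc + rm$ for $c=\sum_i c_i$, expresses $nm=\sum_i (q+rz_i)c_i$ via a Bézout relation $m=\sum_i z_ic_i$, and checks that $q+rz_i>0$ once $n\ge 2\frac{c^2}{m^2}\max_i|z_i|$, yielding an explicit bound.

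However, Step~2 is incorrect as written. You claim $n_r := r\sum_i b_i c_i' = r(S+1)$ is $\equiv r\pmod{c_1'}$ ``via the Bézout identity,'' but that identity asserts $\sum_i a_ic_i'=1$, not $\sum_i b_ic_i'\equiv 1\pmod{c_1'}$. Since $\sum_i b_ic_i'=S+1$ with $S=\sum_i d_ic_i'$, the claim requires $S\equiv 0\pmod{c_1'}$, which has no reason to hold. Concretely, take $l=2$, $c_1'=3$, $c_2'=5$; a Bézout pair is $a_1=2$, $a_2=-1$, so $b_1=2$, $b_2=0$, $d_1=0$, $d_2=1$, $S=5$, and $n_1 = 1\cdot(S+1)=6\equiv 0\pmod 3\neq 1$. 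Thus the $n_r$ you construct do not cover all residue classes and Step~3 does not follow. The gap is easily repaired within your framework: since $\gcd(c_1',\dots,c_l')=1$, for each $r$ one has $r\equiv\sum_{i\ge 2}ra_ic_i'\pmod{c_1'}$; replace each $ra_i$ (for $i\ge 2$) by its nonnegative residue $k_i\in\{0,\dots,c_1'-1\}$ and set $n_r:=\sum_{i\ge 2}k_ic_i'$, which is a nonnegative integer combination of the $c_i'$ with $n_r\equiv r\pmod{c_1'}$. Then $N_0:=\max_r n_r$ makes Step~3 go through unchanged, and the reduction in Step~1 gives the lemma.
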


\begin{proof}
	Using Bézout's identity, we can write $m=\sum_{i=1}^l z_ic_i$ for integers $z_i \in \Z$.
	
	Let $c = \sum_{i=1}^l c_i$. $m$ divides $c$.
	Any multiple $nm$ of $m$ can be written as $nm = qc + rm$ where $q, r$ are integers with $0 \leq r < \frac{c}{m}$. Notably, $nm = \sum_{i=1}^l (q + rz_i)c_i$ using the previous equalities with $m$ and $c$.
	
	If $n \geq 2 \frac{c^2}{m^2} \max_i |z_i|$, then $nm \geq \frac{c^2}{m} \max_i |z_i| + \frac{c^2}{m} \max_i |z_i|$. Since $qc + \frac{c^2}{m} \max_i |z_i| > qc + rc \max_i |z_i| \geq qc + rm = nm$, we deduce that $q > \frac{c}{m} \max_i |z_i| > r \max_i |z_i|$.
	
	Therefore, if $n \geq 2 \frac{c^2}{m^2} \max_i |z_i|$, then for all $i$, $q + rz_i > 0$.
	As a consequence, $nm = \sum_{i=1}^l k_ic_i$ with $k_i = q + rz_i > 0$.
\end{proof}

\begin{lemma}
	\label{magiclemma}
	Let $H$ be a transitive one-dimensional SFT of positive entropy.
	Let $u$ and $w_1$ be the words defined as in \cref{lemlem}, with $\alpha = |u| = |w_1|$.
	
	Let 
	\[
	\widetilde{N_H}(n) = Card(\{ v : |v| = n, u \not\sqsubset v \text{ and } w_1vu \in \mathcal{L}_H \}).
	\]
	Then
	\[
	\frac{\log_2(\widetilde{N_H}(n\alpha))}{n\alpha} \xrightarrow[n \to \infty]{} h(H_u).
	\]
\end{lemma}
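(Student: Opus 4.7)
The plan is to establish matching upper and lower bounds on $\log_2(\widetilde{N_H}(n\alpha))/(n\alpha)$; write $h := h(H_u)$.

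For the upper bound, every $v$ counted by $\widetilde{N_H}(n\alpha)$ lies in $\mathcal{L}_{H_u}(n\alpha)$ by definition (it is in $\mathcal{L}_H$ and avoids $u$), so $\widetilde{N_H}(n\alpha) \leq N_{H_u}(n\alpha)$. Passing to $\log_2$, dividing by $n\alpha$, and invoking the one-dimensional entropy limit along the subsequence $N=n\alpha$ yields $\limsup \log_2(\widetilde{N_H}(n\alpha))/(n\alpha) \leq h$.

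For the lower bound I would rely on two features of $\mathcal{G}(H_u)$ supplied by \cref{lemlem}: the graph is strongly connected (transitivity), and $w_1$ is a closed walk of length $\alpha$ at the vertex $s$ whose powers $w_1^k$ lie in $\mathcal{L}_{H_u}$ (the property ``$u$ appears in $u\{w_1,w_2\}^* w_1$ only at the very beginning'' rules out $u\sqsubset w_1^k$). Let $D$ denote the diameter of $\mathcal{G}(H_u)$. Given $v \in \mathcal{L}_{H_u}(m)$, read it as a walk in $\mathcal{G}(H_u)$ from some vertex $q_1$ to some vertex $q_2$, and surround it with a walk $\pi_1\colon s\to q_1$, a walk $\pi_2\colon q_2\to s$, and sufficiently many copies of $w_1$ on either side to reach total length $n\alpha$, producing
\[
v' \;=\; w_1^{k_1}\,\pi_1\,v\,\pi_2\,w_1^{k_2}.
\]
By construction $v'$ is a walk from $s$ to $s$ in $\mathcal{G}(H_u)$, hence $v'\in\mathcal{L}_{H_u}(n\alpha)$, $u\not\sqsubset v'$, and $w_1 v' u\in\mathcal{L}_H$: thus $v'$ is counted by $\widetilde{N_H}(n\alpha)$.

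The hard part will be hitting the target length $n\alpha$ exactly. Let $g$ be the gcd of closed-walk lengths at $s$ in $\mathcal{G}(H_u)$; since $w_1$ is such a walk, $g \mid \alpha$. Applying \cref{lembezout} vertex by vertex yields a constant $L_0$ such that for every vertex $q$ and every sufficiently large $L$ in the appropriate residue class modulo $g$, walks $s\to q$ and $q\to s$ of length exactly $L$ exist inside $\mathcal{G}(H_u)$. I would then fix $m=n\alpha - c$ with $c$ a multiple of $\alpha$ chosen large enough that for every pair $(q_1,q_2)$ one can pick valid path lengths $|\pi_1|,|\pi_2|$ in a bounded window together with nonnegative integers $k_1,k_2$ for which the length equation $|\pi_1|+|\pi_2|+(k_1+k_2)\alpha = c$ closes; only boundedly many such assignments exist, so the map $v\mapsto v'$ is at most $C$-to-one for a constant $C=C(H,u)$. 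Hence $\widetilde{N_H}(n\alpha)\geq N_{H_u}(m)/C$, and taking $\log_2/(n\alpha)$ gives $\liminf \log_2(\widetilde{N_H}(n\alpha))/(n\alpha)\geq h$, completing the proof.
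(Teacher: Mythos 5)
Your proposal is correct and follows essentially the same strategy as the paper: the upper bound is the containment $\widetilde{N_H}(n\alpha)\le N_{H_u}(n\alpha)$, and the lower bound extends each $v\in\mathcal{L}_{H_u}$ of a fixed slightly shorter length to a word of length exactly $n\alpha$ counted by $\widetilde{N_H}$, using transitivity of $\mathcal{G}(H_u)$ to reach the $s$-vertices and \cref{lembezout} to close the length equation. The differences are in execution. The paper pads with two short connecting words $v',v''$ of length $<d$ and a single filler $v'''$, tracking lengths modulo the gcd $m$ of the cycle lengths at $s$ in $\mathcal{G}(H)$; you pad with short $\pi_1,\pi_2$ inside $\mathcal{G}(H_u)$ together with repeated copies of $w_1$, tracking lengths modulo the period $g$ of $\mathcal{G}(H_u)$. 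Both work. One point you actually handle more carefully than the paper: the paper asserts $\widetilde{N_H}(n\alpha)\ge N_{H_u}(n\alpha-2d-N\alpha)$ without addressing injectivity of $v\mapsto v'vv''v'''$ (not obvious, since $|v'|$ varies with $v$); your ``boundedly-many-to-one'' observation, giving $\widetilde{N_H}(n\alpha)\ge N_{H_u}(m)/C$, is the clean way to state the bound, and the constant is harmless after taking $\log_2(\cdot)/(n\alpha)$. One place where your phrasing is too strong: you ask the length equation $|\pi_1|+|\pi_2|+(k_1+k_2)\alpha=c$ to be solvable ``for every pair $(q_1,q_2)$,'' which is false in a $g$-periodic graph for a fixed $c$. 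What saves you is that only pairs arising as endpoints of some $v\in\mathcal{L}_{H_u}(m)$ occur, and for fixed $m$ these all force the same residue of $|\pi_1|+|\pi_2|\pmod g$: the number of edges of $v$ (namely $m-M'$, with $M'$ the order of $H_u$) pins down the difference between the residue classes of $q_1$ and $q_2$, hence the sum $r(s,q_1)+r(q_2,s)$. That deduction should be made explicit. Finally, note that $s$ is a vertex of $\mathcal{G}(H)$, not of $\mathcal{G}(H_u)$; the paper navigates this by using vertices $s*$ and $*s$ of $\mathcal{G}(H_u)$ that begin or end with $s$, and a tight version of your argument needs the same care at the $w_1^{k_i}/\pi_i$ junctions.
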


\begin{proof}
	Let $s$ be the vertex of the Rauzy graph of $H$ that begins and ends the paths corresponding to $u$ and $w_1$. We will similarly name $s$ its label, a word in ${\mathcal{A}_H}^*$.
	
	Let $m$ be the GCD of the lengths of all cycles $c_i$ from $s$ to $s$ that do not pass through $s$ at any other moment, in the Rauzy graph of $H$. From Lemma~\ref{lembezout}, we deduce that there is some $N \in \N$ such that for any $n \geq N$, there is a path of length $nm$ from $s$ to $s$ in the Rauzy graph of $H$, as a concatenation of all cycles $c_i$ with a positive number of times $k_i$ for each of them. Since any order of concatenation works, one can notably build a cycle from $s$ to $s$ of length $nm$ that does not contain $u$ (by concatenating all occurrences of path $\gamma_2$, as defined in the proof of \cref{lemlem} in a row).
	
	Let $d$ be the diameter of the Rauzy graph of $H_u$.
	Let $n\geq N+\frac{2d+|u|}{\alpha}$. Let $v\in \mathcal{L}_{H_u}$ so that $|v|=n\alpha-2d-N\alpha$ (notably $|v| \geq |u|$).
	Since the graph $\mathcal{G}(H_u)$ is transitive, there are two words $v'$ and $v''$ of size smaller than $d$ so that $v'vv'' \in \mathcal{L}_{H_u}$, with the path corresponding to $v'$ beginning with some vertex $s*$ when seen as a path in $\mathcal{G}(H_u)$, and the path corresponding to $v''$ ending with some vertex $*s$ -- where $s*$ is the label of a vertex that has $s$ as a prefix, and $*s$ the label of a vertex that has $s$ as a suffix. This word can also be seen as a path in the Rauzy graph of $H$; there, that path is a cycle from $s$ to $s$; hence $|v'vv''|=km$ for some $k \in \N$ since $m$ divides the length of all cycles from $s$ to $s$ in $\mathcal{G}(H)$.
	
	A notable consequence of this, using the length of $v$ and the fact that $m$ divides $\alpha$ (since $w_1$ corresponds to a cycle from $s$ to $s$ in $H$) is that $2d-|v'|-|v''| > 0$ is a multiple of $m$.
	
	Let $v'''$ be a word of length $2d+N\alpha-|v'|-|v''|$ that is a cycle from $s$ to $s$ in $H$, so that additionally $v'vv''v''' \in \mathcal{L}_{H_u}$. This is doable by doing a correct concatenation of cycles from $s$ to $s$ to build $v'''$, because $2d+N\alpha-|v'|-|v''|$ is a multiple of $m$ which is greater than $N\alpha$, hence it is greater than $Nm$, so we can apply \cref{lembezout}.
	
	We have $|v'vv''v'''|=n\alpha$, $v'vv''v''' \in \mathcal{L}_{H_u}$ and $w_1v'vv''v'''u \in \mathcal{L}_H$.
	
	We deduce that	
	\[
	\widetilde{N_H}(n\alpha)\geq N_{H_u}(n\alpha-2d-N\alpha).
	\]
	Since we also have $N_{H_u}(n\alpha) \geq \widetilde{N_H}(n\alpha)$, taking the logarithm and dividing by $n\alpha$, we obtain
	\[
	\frac{\log_2(\widetilde{N_H}(n\alpha))}{n\alpha}\underset{n\to\infty}{\longrightarrow}h(H_{u}).
	\]
\end{proof}

\subsection{Main result}
\label{subsec:mainresult}

\RealizationEntropy

\begin{proof}
	Let $h \leq h(H), h \in \Pi_1$. We assume that $h(H) > h > 0$; if not, a trivial two-dimensional SFT satisfies the Theorem.
	It is possible to assume that $H$ is transitive: indeed, in one-dimensional SFTs the entropy is due to only one connected component of the Rauzy graph (see \cite{LindMarcus}). Furthermore, since we assumed $h(H) > 0$, that connected component is not a plain cycle, which are of entropy $0$. Hence we can additionally assume that the Rauzy graph of $H$ is not a plain cycle.
	
	Since $H$ is a transitive SFT with positive entropy, thanks to \cref{lemlem} there exists a word $u \in \mathcal{L}_H$ of size $\alpha = |u|$ larger than the order of $H$ and two different words $w_1, w_2 \in L(H)$ such that for $|w_1|=|w_2|=\alpha$ such that $uww_1 \in \mathcal{L}_H$ for all $w \in \{w_1,w_2\}^*$. Moreover $h(H_u) > h$ where $H_u$ is the subshift included in $H$ where the word $u$ does not appear.
	
	Denote
	\[
	\widetilde{N_H}(n) = Card(\{ v : |v| = n, u \not\sqsubset v \text{ and } w_1vu \in \mathcal{L}_H \}).
	\]
	According to \cref{magiclemma}, one has 
	\[
	\frac{\log_2(\widetilde{N_H}(n\alpha))}{n\alpha} \xrightarrow[n \to \infty]{} h(H_u).
	\]
	
	Consider an integer $t \geq 2$ such that $h(H_u) > (1 + \frac{1}{t})h$. Moreover, we ask for $t$ to be big enough so that for any $n \geq t$,
	\[
	\frac{\log_2(\widetilde{N_H}(n\alpha))}{n\alpha} > (1+\frac{1}{t})h
	\]
	which is possible since the left term converges to $h(H_u)$.
	
	Now consider $K$ the sum of the Kolmogorov
	complexities of the following different elements: $h$, $t$, $\alpha$, the algorithms that compute the addition, the
	multiplication, the logarithm, the algorithm that on input $n$ returns $\widetilde{N_H}(n)$ and the algorithm that on
	input $(a,b,c)$ trio of integers returns the first integer $r$ such that $\frac{a}{b+r-1} > c \geq \frac{a}{b+r}$.
	
	Let $R = \ceil{\log_2(3CK)}$ and consider $q = Rt$. One has
	\[
	\frac{\log_2(\widetilde{N_H}(q\alpha))}{(R+q)\alpha} = \frac{q}{q+R} \frac{\log_2(\widetilde{N_H}(q\alpha))}{q\alpha} > \frac{t}{t+1} (1+\frac{1}{t})h = h.
	\]
	
	Therefore, there exists $r > R$ such that
	\[
	\frac{\log_2(\widetilde{N_H}(q\alpha))}{(q+r-1)\alpha} > h \geq \frac{\log_2(\widetilde{N_H}(q\alpha))}{(q+r)\alpha}
	\]
	
	due to the fact that it is a sequence decreasing to $0$ as $r$ increases.

	Consider $h^\prime = (q+r)\alpha\left( h - \frac{\log_2(\widetilde{N_H}(q\alpha))}{(q+r)\alpha} \right) > 0$. The Kolmogorov complexity of $h^\prime$ is less than $3K$, because $K$ contains the complexity of doing each of the operations used in $h^\prime$ except computing $q$, and computing $q$ requires to compute $R$ which requires to compute $K$ which has Kolmogorov complexity at most $K$. Assembling all of this, we obtain a complexity of at most $3K$.
	
	Consequently, using \cref{prop:algo}, there exists a constant $C>0$ and a Wang tile set $T_W$ with at most $3CK$ tiles such that the associated SFT $W$ has for entropy $h(W) = h^\prime$.
	
	Now, consider the two-dimensional subshift $X \subset \A^{\Z^2}$ with the following local rules:
	\begin{itemize}
		\item every line satisfies the conditions of $H$, thus $\pi_{\vec{e_1}}(X) \subset H$;
		\item if the word $u$ appears horizontally starting at position $(i,j)$, it also appears at positions $(i,j+1)$ and $(i,j-1)$;
		\item the word $u$ appears with the horizontal period $(q+r)\alpha$ and it cannot appear elsewhere on a given line;
		\item the word $u$ is followed by a word of $\{w_1,w_2\}^R.w_1^{r-R-1}$;
		\item the tiles coded in binary by the words of $\{w_1,w_2\}^R$ after $u$ satisfy horizontal and vertical constraints imposed by the Wang tile set $T_W$.
	\end{itemize}

	The window of size $q\alpha$ that remains between two words $u$ and is not filled by the previous constraints has the only restriction of respecting the horizontal conditions of $H$ and of not containing $u$.
	There are horizontal lines that respect all of the horizontal conditions in $X$, because:
	\begin{itemize}
	\item $u\{w_1,w_2\}^*w_1 \subset \mathcal{L}_H$;
	\item $\widetilde{N_H}(q\alpha) \neq 0$ by the use of \cref{magiclemma};
	\item forcing $u$ to appear nowhere else than with a $(q+r)\alpha$ period is possible because of \cref{lemlem}: $u$ appears as a subword of any word in $u\{w_1,w_2\}^*w_1$ at the very beginning only;
	\end{itemize}
	The only vertical restriction that is added between these horizontal lines is only the alignment of the $u$'s between two lines where this word appears; so there are configurations in $X$ overall.
	
	Let $n = (q+r) \alpha$. One has
	\[
	\widetilde{N_H}(q\alpha)^{k(k-1)} N_W(k-1, k) \leq N_X(kn, k)
	\]
	because in any $kn \times k$ window in $X$ there are at least $(k-1) \times k$ complete horizontal segments starting with a word $u$, that encode a $k-1 \times k$ pattern of $W$ binary with $\{w_1,w_2\}^R$, and each of them also contains $q\alpha$ additional characters that link $w_1$ and $u$ and that do not contain $u$.
	
	One also has
	\[
	N_X(kn, k) \leq \widetilde{N_H}(q\alpha)^{k(k+1)} N_W(k+1, k)
	\]
	because similarly, in any $kn \times k$ window in $X$ there are less than $(k+1) \times k$ complete horizontal segments starting with a word $u$. One obtains the resulting inequality:
	
	\[
	\frac{(k-1)\log_2(\widetilde{N_H}(q\alpha))}{kn} + \frac{\log_2(N_W(k-1, k))}{nk^2} \leq \frac{\log_2(N_X(kn, k))}{nk^2} \leq \frac{(k+1)\log_2(\widetilde{N_H}(q\alpha))}{kn} + \frac{\log_2(N_W(k+1, k))}{nk^2}
	\]
	
	Thus, taking the limit when $k \to \infty$, one obtains
	
	\[
	h(X) = \frac{\log_2(\widetilde{N_H}(q\alpha))}{(q+r)\alpha} + \frac{h(W)}{(q+r)\alpha} = h.
	\]
\end{proof}

\subsection{Some consequences}

A direct consequence of \cref{th:RealizationEntropy} -- up to extending the construction to higher dimensions, which is straightforward -- is the characterization of all the entropies in any dimension $d>2$, as is obtained in \cite{HM}:
\begin{corollary}
	\label{th:bonus}
	Let $\A$ be a finite alphabet. For any number $h\in\Pi_1$ such that $0\leq h\leq\log_2(\A)$, there exists a SFT on $\A^{\Z^d}$ of entropy $h$.
\end{corollary}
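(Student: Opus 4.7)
The plan is to reduce the statement to the two-dimensional case already established by Theorem~\ref{th:RealizationEntropy}, applied with $H$ being the full one-dimensional shift on $\A$, and then to bootstrap from dimension two to any dimension $d \geq 2$ by a trivial lift.

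First I would handle the base case $d = 2$. Take $H = \A^{\Z}$, which is a one-dimensional SFT (with empty set of forbidden patterns) of entropy $h(H) = \log_2(|\A|)$. Since $H$ is the full shift, for any finite set of forbidden patterns $\F$ the subshift $X_{H,\F}$ coincides with the ordinary SFT $X_\F \subset \A^{\Z^2}$. Theorem~\ref{th:RealizationEntropy} then says that for every $h \in \Pi_1 \cap [0, \log_2(|\A|)]$, one can find $\F$ with $h(X_\F) = h$. This already settles the case $d = 2$ and is what gives the statement its actual content: the alphabet is fixed beforehand, not inflated by the Hochman--Meyerovitch machinery.

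Next I would deal with $d > 2$ by a ``stacking'' construction. Given the SFT $X_\F \subset \A^{\Z^2}$ of entropy $h$ from the previous step, define
\[
X' = \Bigl\{ y \in \A^{\Z^d} : \text{for every } \vec{k} \in \Z^{d-2},\ (y_{i,j,\vec{k}})_{(i,j) \in \Z^2} \in X_\F \Bigr\}.
\]
This is a $\Z^d$-SFT on the same alphabet $\A$: it suffices to take as forbidden patterns the patterns of $\F$, viewed as $d$-dimensional patterns supported on a two-dimensional slice in the first two coordinates. Because the constraints involve only pairs of coordinates, the slices for distinct $\vec{k} \in \Z^{d-2}$ are mutually independent, so
\[
N_{X'}(n, n, \ldots, n) = N_{X_\F}(n,n)^{n^{d-2}},
\]
and dividing the logarithm by $n^d$ gives $h(X') = h(X_\F) = h$, as required.

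No real obstacle arises: the hard analytic and combinatorial work is already packaged in Theorem~\ref{th:RealizationEntropy}, and the only new point is choosing $H$ to be the full shift (so that $h(H) = \log_2(|\A|)$ and the constructed SFT lives on the prescribed alphabet) together with the elementary observation that independent stacking of two-dimensional configurations preserves entropy. The only mild check is that $\log_2(|\A|)$ itself lies in $\Pi_1$, which is immediate since it is computable, ensuring that the full closed interval $[0,\log_2(|\A|)] \cap \Pi_1$ is what gets realized.
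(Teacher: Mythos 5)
Your proof is correct and takes essentially the same approach the paper intends: apply Theorem~\ref{th:RealizationEntropy} with $H = \A^{\Z}$ the full shift (so $h(H) = \log_2(|\A|)$ and $X_{H,\F}$ is just an ordinary SFT over $\A$) to get the case $d=2$, then lift to $d>2$; the paper states this extension to higher dimensions is ``straightforward'' without spelling it out, and your independent-stacking construction with the computation $N_{X'}(n,\dots,n)=N_{X_\F}(n,n)^{n^{d-2}}$ is a clean and correct way to do it. The only remark worth making is that the corollary is implicitly restricted to $d\geq 2$ (in dimension one only logarithms of Perron numbers arise), which your argument respects since it begins from the two-dimensional base case.
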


Given a one-dimensional \emph{effective subshift} $H$, that is a subshift with a list of forbidden patterns enumerable by a Turing Machine, it is known that there exists a two-dimensional sofic subshift which has $H$ as projective subaction \cite{simulation}. A natural question is if it is possible to impose to have a specific entropy $h$ with $h\leq h(H)$. The next corollary is a partial result for effective subshifts containing a SFT.

\begin{corollary}
	\label{th:bonus2}
	Let $H \subset \A^\Z$ be an effective subshift with $H' \subset H$ a SFT.
	Let $0 \leq h \leq h(H'), h \in \Pi_1$.
	
	Then there exists a two-dimensional sofic subshift $X$ such that $h(X) = h$ and $\pi_{\vec{e_1}}(X) = H$.
\end{corollary}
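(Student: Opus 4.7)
The plan is to combine, via a union construction, a two-dimensional SFT of entropy $h$ obtained from \cref{th:RealizationEntropy} applied to the sub-SFT $H' \subset H$, with a sofic two-dimensional subshift of entropy zero whose horizontal projection equals all of $H$. Unions of sofic subshifts are sofic (via a tagging construction on a disjoint-union alphabet with nearest-neighbor enforcement of the tag), and the entropy of a union is the maximum of the entropies of the pieces, so carrying out these two constructions separately and combining them will yield a sofic $X$ with $h(X) = h$ and $\pi_{\vec{e_1}}(X) = H$.

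First, apply \cref{th:RealizationEntropy} to $H'$: there is a finite set $\F$ of forbidden patterns such that $Y := X_{H', \F}$ is a two-dimensional SFT with $\pi_{\vec{e_1}}(Y) \subset H' \subset H$ and $h(Y) = h$. This provides the correct entropy but only a partial horizontal projection. Second, I build a sofic two-dimensional subshift $Z \subset \A^{\Z^2}$ with $\pi_{\vec{e_1}}(Z) = H$ and $h(Z) = 0$. The candidate is the vertically constant extension
\[
Z := \{x \in \A^{\Z^2} : x_{i,j} = x_{i,0} \text{ for all } i,j, \text{ and } (x_{i,0})_i \in H\}.
\]
Vertical constancy is a nearest-neighbor SFT rule; the condition $(x_{i,0})_i \in H$, for $H$ only effective, is enforced by an auxiliary computational layer following the standard 1D-to-2D simulation theorems, adapted so that the verifier reads the (vertically repeated) $\A$-row as a static input tape and lives entirely in a separate layer disjoint from $\A$. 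Any $n \times n$ window of $Z$ is determined by a single horizontal row of length $n$, so $N_Z(n,n) = N_H(n) \leq |\A|^n$, which yields $h(Z) = 0$.

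Finally, set $X := Y \cup Z$. Then $X$ is sofic, $\pi_{\vec{e_1}}(X) = \pi_{\vec{e_1}}(Y) \cup \pi_{\vec{e_1}}(Z) = H$, and $h(X) = \max(h(Y), h(Z)) = h$, so $X$ satisfies the conclusion. The main obstacle lies in the second step: the standard simulation theorems produce a sofic realization of $H$ but typically with nontrivial vertical structure in the $\A$-layer, whereas forcing the $\A$-layer to be vertically constant, so as to drive $h(Z)$ down to zero while still recovering all of $H$ as the horizontal projection, requires the computational layer verifying $H$-membership to be compatible with this restriction. This is a minor but essential adaptation of the construction cited in the paragraph preceding the corollary.
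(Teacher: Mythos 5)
Your proof is correct and takes a genuinely different route from the paper's. Both arguments use the same two ingredients -- \cref{th:RealizationEntropy} applied to $H'$ to get a sofic $Y$ with $h(Y)=h$ and rows in $H'$, and the 1D-to-2D simulation theorems to realize $H$ as the horizontal projection of a zero-entropy sofic $Z$ -- but you combine them by taking the set-theoretic union $Y\cup Z$ (sofic since a finite union of sofics is sofic, with $h(Y\cup Z)=\max(h(Y),h(Z))$ and $\pi_{\vec{e_1}}$ respecting the union), whereas the paper builds a single \emph{connected} sofic by stacking: it uses an auxiliary $\{0,0',1\}$-layer to select at most one distinguished row, which is filled from the $H$-realization, while every other row is filled from $X'$. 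The paper's construction keeps the two sources inside the same configurations; yours places them in disjoint configurations, which is conceptually simpler and sidesteps the need for the $\{0,0',1\}$-marker layer and the associated factor map. One small over-caution in your write-up: the final paragraph worries that the standard simulation theorems do not directly give a sofic \emph{vertically constant} extension of $H$, but they do -- the theorems of Aubrun--Sablik and Durand--Romashchenko--Shen cited in the paper prove precisely that the vertically constant extension $\hat H=\{x\in\A^{\Z^2}: x_{i,j}=x_{i,0},\ (x_{i,0})_i\in H\}$ is sofic -- so the ``adaptation'' you flag is already contained in the black-box result and no extra work is required.
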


\begin{proof}
	Let $h \leq h(H'), h \in \Pi_1$.
	
	Let $X'$ be the two-dimensional SFT obtained from $H'$ using \cref{th:RealizationEntropy}, which is so that $h(X') = h$ and $\pi_{\vec{e_1}}(X') \subset H'$.
	
	Let $Y_H$ be the SFT with $H$ as horizontal rules and no added vertical rule.
	
	Let $Z$ be a SFT over $\{0, 0', 1\}^{\Z^2}$ so that two horizontally successive elements must be the same, so that only $0$ and $1$ can be above $0$, only $0'$ can be above $1$, and only $0'$ can be above $0'$. The configurations of $Z$ are made of at most a single line of $1$s with $0'$s above and $0$s below.
	
	We define $Y = Z \times Y_H \times Y'$ which is a SFT by product, and finally $X$ which is the projection $\pi(Y)$ with
	\[
	\pi(y)_{i,j} =
	\begin{cases}
	(y_2)_{i,j} &\quad\text{if } (y_1)_{i,j} = 1\\
	(y_3)_{i,j} &\quad\text{if } (y_1)_{i,j} \neq 1\\
	\end{cases}
	\]
	where $y_1, y_2$ and $y_3$ are the projections of a configuration $y \in Y$ on the three SFTs of the product.
	
	In the end, a configuration in $X$ has at most one line that can be any configuration of $H$; and all its other lines are from $X'$, ensuring $h(X) = h(X') = h$. Furthermore, since $\pi_{\vec{e_1}}(X') \subset H' \subset H$, we obtain $\pi_{\vec{e_1}}(X) = H$.
\end{proof}

\begin{remark}
 In the theorem of simulation of~\cite{Aubrun-Sablik-2010,Durand-Romashchenko-Shen-2012}, any effective subshift $H$ can be realized as the projective subaction of a two-dimensional sofic subshift, but that sofic subshift has zero entropy. It is natural to ask if it is possible to realize $H$ with any possible entropy for the two-dimensional subshift, that is to say any $\Pi_1$-computable number of $[0,h(H)]$. This question is related to the conjecture that a one-dimensional subshift $H$ is sofic if and only if the two-dimensional subshift $H^\ast=\{x\in\A^{\Z^2}: \textrm{ for all }i\in\Z, x_{\Z\times\{i\}}\in H\}$ is sofic. We remark that the entropy of $H^\ast$ is $h(H)$, obtained with completely independent rows; thus allowing entropy in the realization is a way of giving some independence between rows.
\end{remark}

%
%

\section{Theorem of simulation under interplay between horizontal and vertical conditions}
\label{sec:simulation}

Another way of looking at one-dimensional constraints in a two-dimensional setting, as mentioned in \cref{subsec:horizontalconstraints}, is to try to understand if a pair of constraints are compatible. Some are chosen as horizontal conditions and the others as vertical conditions: can the resulting combined subshift $X_{H,V}$ be anything we want?

For a few nearest-neighbor horizontal constraints, it is not that hard to realize that, whichever vertical constraints we match them with, the combined subshift will necessarily contain periodic configurations, and therefore not any SFT on $\Z^2$ will be obtainable. These constraints are said to respect condition D, see \cref{subsec:conditionD}, which is the union of three smaller easy-to-understand conditions.

For any other kind of horizontal constraint, we prove in a disjunctive fashion that we can simulate, in some sense, any two-dimensional subshift $Y$. However, since the entropy of $X_{H,V}$ is bounded by $h(H)$, the simulation cannot be a conjugacy; the correct notion is the one of root of a subshift, defined in \cref{subsec:root}.

This section is devoted to proving the following, for a condition D to be defined in \cref{subsec:conditionD}.

\begin{theorem}
\label{th:root}
Let $H\subset\A^\Z$ be a one-dimensional \emph{nearest-neighbor} SFT whose Rauzy graph does not satisfy condition D. For any two-dimensional SFT $Y\subset\B^{\Z^2}$, there exists a one-dimensional SFT $V_Y\subset\A^\Z$ such that $X_{H,V_Y}$ is a $(m,n)$th root of $Y$ for some $m, n \in \N^2$. Furthermore, $m$, $n$ and $V_Y$ can be computed algorithmically.
\end{theorem}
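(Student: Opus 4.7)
The plan is to adapt the construction of Proposition~\ref{rootsubsystem} so that the finite family of two-dimensional forbidden patterns used there is replaced by a genuine one-dimensional vertical SFT $V_Y\subset\A^\Z$. Up to a conjugacy I first reduce $Y$ to a Wang shift on tiles $T_1,\ldots,T_n$, so that the forbidden patterns of $Y$ are only nearest-neighbor colour mismatches. The goal is then to build \emph{macro-tiles} $U_1,\ldots,U_n\in\mathcal{L}_H$ of common length $N$ such that, in addition to the delimiter properties needed in Proposition~\ref{rootsubsystem}, the east, west, north and south colours of each $T_k$ can be read off from the letters of $U_k$ at four fixed columns $c_e,c_w,c_n,c_s\in\{0,\ldots,N-1\}$. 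If this is achieved, the Wang matching rules become constraints on pairs of letters inside individual columns, which is exactly what a nearest-neighbor $V_Y$ is allowed to express.

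The combinatorial heart of the argument is the construction of the $U_k$'s from the failure of condition D. Imitating Proposition~\ref{rootsubsystem}, I would pick a vertex $s$ of the Rauzy graph $\mathcal{G}(H)$ admitting three cycles $\gamma_\star,\gamma_0,\gamma_1$ based at $s$ that do not revisit $s$ in between, with $\gamma_\star$ playing the role of a marker and $\gamma_0,\gamma_1$ the role of two bits used to binary-encode the Wang-tile index. I set
\[
U_k := \ell(\gamma_\star\gamma_\star\,\gamma_0\,\phi_k\,\gamma_0\,\gamma_\star\gamma_\star),
\]
where $\phi_k\in\{\gamma_0,\gamma_1\}^{\lceil\log_2 n\rceil}$ is a binary encoding of $k$. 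The doubled marker $\ell(\gamma_\star\gamma_\star)$ unambiguously delimits macro-tiles on every horizontal line. The failure of condition D should be precisely the statement that one can further choose $\gamma_0,\gamma_1,\gamma_\star$ and the four distinguished columns inside the marker blocks so that the letters appearing at $c_e,c_w,c_n,c_s$ in $U_k$ are independent of the index-encoding part $\phi_k$ and can be freely tuned to represent the four Wang colours of $T_k$, while all concatenations $U_k U_{k'}$ remain in $\mathcal{L}_H$.

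Given these $U_k$'s, the vertical SFT $V_Y$ consists of three families of forbidden nearest-neighbor vertical 2-blocks: \emph{marker propagation}, which forces a marker letter at a macro-tile boundary column to lie above and below another marker letter, and hence (combined with the horizontal rules of $H$) aligns all rows on a common macro-tile grid; \emph{north/south matching}, which at columns $c_n$ and $c_s$ forbids the vertical 2-blocks whose letters do not correspond to a compatible pair of Wang colours; and \emph{east/west matching}, enforced by forcing the columns $c_e$ and $c_w$ of every macro-tile to be vertically constant by a vertical rule, so that the horizontal constraints imposed by $H$ along a given row then translate into the required east/west compatibility between consecutive macro-tiles. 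Taking $Z$ to be the clopen of configurations in $X_{H,V_Y}$ whose macro-tile grid starts at column $0$, one verifies that $\sigma^{(N,0)}(Z)=Z$, $\sigma^{(0,1)}(Z)=Z$, and that the block decoding $\varphi\colon Z\to Y$ sending the macro-tile $U_k$ at position $(jN,\ell)$ to $T_k$ at $(j,\ell)$ is a shift-equivariant homeomorphism, so that $X_{H,V_Y}$ is the claimed $(N,1)$th root of $Y$. Effectivity follows from the fact that everything is done by a finite search in $\mathcal{G}(H)$ and in the Wang tile set of $Y$.

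The main obstacle will be the east/west matching step. Unlike north/south, which relates letters in the same column but different rows and so fits naturally into a vertical SFT, east/west matching relates letters in different columns separated by a whole macro-tile width; encoding it using only vertical rules forces one to use the horizontal structure of $H$ itself as a communication channel for colour information, which is only possible if $\mathcal{G}(H)$ has enough flexibility. The three sub-conditions that together make up condition D are precisely the obstructions to this flexibility, and the proof must proceed by a case analysis showing that, whenever $\mathcal{G}(H)$ fails one of them, suitable cycles $\gamma_\star,\gamma_0,\gamma_1$ and columns $c_e,c_w,c_n,c_s$ can indeed be exhibited; because $H$ is nearest-neighbor, small changes in these choices immediately affect the 2-blocks at the macro-tile boundaries, so a careful bookkeeping of admissible horizontal transitions at every distinguished column is unavoidable.
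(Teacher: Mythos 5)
Your proposal takes a genuinely different architectural route from the paper's, and it contains a gap that is not patchable within the framework you set up.

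You encode each Wang tile $T_k$ as a \emph{horizontal} macro-word $U_k\in\mathcal{L}_H$ of length $N$ (height one), yielding an $(N,1)$-root, and you try to read all four Wang colours off fixed columns $c_e,c_w,c_n,c_s$ of $U_k$. The paper's construction is the dual of this: a tile is encoded by the \emph{vertical} position of a distinguished symbol $c^2_j$ inside a ``micro-slice'' of height $N$, the macro-unit is a column segment of height $KMN$, and the root is a $(M,KMN)$-root. That change of geometry is not cosmetic; it is what makes east--west matching expressible as a vertical rule.

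Concretely, east--west matching is where your proposal breaks. In your scheme, the east colour of the tile at Wang-position $(j,\ell)$ sits at absolute column $jN+c_e$ and the west colour of the tile at $(j+1,\ell)$ sits at $(j+1)N+c_w$: same row, columns $N-c_e+c_w>1$ apart. A vertical rule relates only cells in the same column, and the nearest-neighbour horizontal rule $H$ is a \emph{fixed, given} SFT that cannot be made to carry tile-dependent colour information across the $\phi_k$ block. Your proposed fix --- forcing the $c_e$ and $c_w$ columns to be vertically constant --- is worse than a gap: once aligned, ``vertically constant'' forces every row to display the same east and west colours at those absolute columns, which destroys the generality of the Wang shift (and is in any case incompatible with the $\phi_k$ part varying row by row, since the letters at $c_e,c_w$ must be part of a legal $H$-word together with $\phi_k$). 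You do flag east--west as the hard step and appeal to ``flexibility coming from the failure of condition D'', but no mechanism is given, and the mechanism is in fact the whole content of the theorem.

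The paper resolves this by a trick your setup cannot replicate. Because the distinguished symbol $c^2_j$ drifts diagonally (by the good-pair property of $C^1,C^2$), a single ``code meso-slice'' --- a vertical segment of height $MN$ --- contains, at its top, the encoding of the tile at Wang-position $(a,b)$ and, below some buffers, the encoding of the tile at $(a+1,b)$ (the ``main-coded'' and ``side-coded'' tiles). Horizontal adjacency of Wang tiles thus becomes a constraint between two encodings living in one and the same column segment, i.e.\ a vertical constraint, which is exactly what the statement allows. North--south matching is the easy part in both approaches; the entire difficulty is converting east--west into something vertical, and encoding tiles by vertical position rather than by a horizontal word is precisely what enables it.

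A secondary issue: your ``marker propagation'' rule --- a vertical rule forbidding a non-marker letter above or below a marker letter --- only aligns the macro-grid if marker letters occur \emph{only} in marker blocks, but $\gamma_\star,\gamma_0,\gamma_1$ all pass through the same vertex $s$ and may share internal vertices, so marker letters will generally reappear inside $\phi_k$. In Proposition~\ref{rootsubsystem} this is sidestepped by using genuinely two-dimensional forbidden patterns that target specific offsets within the repeat period, which you are no longer allowed to do. The paper handles alignment by a different device (border and $C^1/C^2$ meso-slices together with the no-uniform-shortcut and attractive/repulsive-vertex conditions, Propositions~\ref{propapproxaligned} and~\ref{propsync}), which again relies on the vertical macro-slice structure.

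In short: the strategy of porting Proposition~\ref{rootsubsystem} by replacing its two-dimensional forbidden patterns with vertical ones keeps the horizontal macro-tile geometry, and in that geometry east--west matching is a two-column, long-range horizontal constraint that a one-dimensional vertical SFT cannot express. The paper's proof is not a refinement of Proposition~\ref{rootsubsystem} but a rotation of it, precisely to make that constraint vertical.
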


\subsection{The condition D}
\label{subsec:conditionD}

\begin{definition}
	\label{conditionD}
	We say that an oriented graph $\mathcal{G} = (\mathcal{V},\vec{E})$ verifies condition $(D)$ (for "Decidable") if all its SCCs have a type in common among the following list. A SCC $S$ can be of none, one or several of these types:
	\begin{itemize}
		\item for all vertices $v \in S$, we have $(v,v) \in \vec{E}$: we say that $S$ is of \emph{reflexive type};
		
		\item for all vertices $v \neq w \in S$ such that $(v, w) \in \vec{E}$, we have $(w,v) \in \vec{E}$: we say that $S$ is of \emph{symmetric type};
		
		\item there exists $p \in \N$ so that $S = \bigsqcup_{i=0}^{p-1} V_i$ with, for any  $v \in V_i$, we have $[ (v,w) \in \vec{E} \Leftrightarrow w \in V_{i+1} ]$ with $i+1$ meant modulo $p$: we say that $S$ is of \emph{state-split cycle type}.
	\end{itemize}
	
\end{definition}

\begin{remark}
	The term state-split is used in reference to a notion introduced in \cite{LindMarcus}: a state-split cycle is a cycle where some vertices have been split.
	
	Note that $S = \{v\}$ a single vertex with a loop is also of symmetric type. Similarly, a single vertex is of state-split cycle type with a partition with one unique class $V_0$.
\end{remark}

\begin{example}
	Consider the following Rauzy graphs:
	
	\begin{minipage}{0.3\textwidth}
		\begin{tikzpicture}
		\node[draw, circle] (b) {};
		\node[draw, circle] (a) [below left = 0.5cm and 0.25cm of b] {};
		\node[draw, circle] (c) [below right = 0.5cm and 0.25cm of b] {};
		
		\draw[->, >=latex] (a) to[bend left=20] (b);
		\draw[->, >=latex] (b) to[bend left=20] (c);
		\draw[->, >=latex] (c) to[bend left=20] (a);
		
		\draw[->, >=latex] (c) to[bend left=20] (b);
		
		\draw[->, >=latex] (b) to[loop above] (b);
		\draw[->, >=latex] (c) to [out=330,in=300,looseness=12] (c);
		\draw[->, >=latex] (a) to [out=240,in=210,looseness=12] (a);
		
		\node (h) [below = 1cm of b] {$\mathcal{G}(H_1)$};
		\end{tikzpicture}
	\end{minipage}
	\begin{minipage}{0.3\textwidth}
		\begin{tikzpicture}
		\node[draw, circle] (b) {};
		\node[draw, circle] (a) [below left = 0.5cm and 0.25cm of b] {};
		\node[draw, circle] (c) [below right = 0.5cm and 0.25cm of b] {};
		
		\draw[->, >=latex] (a) to[bend left=20] (b);
		\draw[->, >=latex] (b) to[bend left=20] (c);
		\draw[->, >=latex] (c) to[bend left=20] (a);
		
		\draw[->, >=latex] (c) to[bend left=20] (b);
		\draw[->, >=latex] (b) to[bend left=20] (a);
		\draw[->, >=latex] (a) to[bend left=20] (c);
		
		\draw[->, >=latex] (b) to[loop above] (b);
		\draw[->, >=latex] (c) to [out=330,in=300,looseness=12] (c);
		
		\node (h) [below = 1cm of b] {$\mathcal{G}(H_2)$};
		\end{tikzpicture}
	\end{minipage}
	\begin{minipage}{0.3\textwidth}
		\begin{tikzpicture}
		\node[draw, circle] (b) {};
		\node[draw, circle] (a) [below left = 0.5cm and 0.25cm of b] {};
		\node[draw, circle] (c) [below right = 0.5cm and 0.25cm of b] {};
		\node[draw, circle] (d) [left = 0.3cm of a] {};
		\node[draw, circle] (e) [left = 0.3cm of d] {};
		\node[draw, circle] (f) [above = 0.3cm of b] {};

		\node [rotate=0][draw,dashed,inner sep=0.5pt, circle,yscale=.7,fit={(a)(d)(e)}] (meta1) {};
		\node [rotate=90][draw,dashed,inner sep=0.5pt, circle,yscale=.7,fit={(b)(f)}] (meta2) {};
		\node [rotate=0][draw,dashed,inner sep=0.5pt, circle,yscale=.7,fit={(c)}] (meta3) {};
		
		\draw[->, >=latex] (meta1) to[bend left=20] (meta2);
		\draw[->, >=latex] (meta2) to[bend left=20] (meta3);
		\draw[->, >=latex] (meta3) to[bend left=20] (meta1);
		
		\node (h) [below = 1cm of b] {$\mathcal{G}(H_3)$};
		\end{tikzpicture}
	\end{minipage}

	where edges between dotted sets of vertices in the third graph represent that all vertices from the first set have edges leading to all vertices of the second set.
	
	These three graphs respect condition D, being respectively of reflexive, symmetric and state-split cycle type.
\end{example}

\subsection{Generic construction}
\label{subsec:core}

In this section, we describe a set of properties on a directed graph, forming a condition called \emph{condition C} that has stronger requirements than condition D. Condition C allows a generic construction of the proof of \cref{th:root} for the associated one-dimensional, nearest-neighbor SFT.

In all that follows, \emph{we denote elements of cycles with an index that is written modulo the length of the corresponding cycle}.
We need the following defintions before describing condition C.

\begin{definition}
	Let $C^1$ and $C^2$ be two cycles in an oriented graph $\mathcal{G}$, with elements denoted respectively $c^1_i$ and $c^2_j$. Let $M := LCM(|C^1|,|C^2|)$. Let $C$ be any cycle in that graph, with elements denoted $c_i$.
	
	We say that the cycles $C^1$ and $C^2$ contain a \emph{good pair} if there is a pair $(i,j)$ and an integer $1<l<M-1$ such that $c^1_i \neq c^2_j, c^1_{i+1} \neq c^2_{j+1}, \dots, c^1_{i+l} \neq c^2_{j+l}$ and $c^1_{i+(l+1)} = c^2_{j+(l+1)}, \dots, c^1_{i+(M-1)} = c^2_{j+(M-1)}$.
	All pairs $(i+p,j+p), p \in \{0,\dots,M-1\}$ are said to be \emph{in the orbit of a good pair}.
	
	We say that a cycle $C$ \emph{contains a uniform shortcut} if there exists a $k \in \{0, 2, 3 ..., |C|-1\}$ (any value except $1$) such that for any $c_i \in C, (c_i, c_{i+k}) \in \vec{E}$.
	
	We say that there is a \emph{cross-bridge} between $C^1$ and $C^2$ if there are $i \in \{0, ..., |C^1|-1\}$ and $j \in \{0, ..., |C^2|-1\}$ with $c^1_i \neq c^2_j$ and $c^1_{i+1} \neq c^2_{j+1}$ such that $(c^1_i,c^2_{j+1}) \in \vec{E}$ and $(c^2_j,c^1_{i+1}) \in \vec{E}$.
	
	Finally, an \emph{attractive vertex} for $C$ is any vertex $v$ so that for all $c \in C$, $(c,v) \in \vec{E}$. A \emph{repulsive vertex} is defined similarly, with $(v,c) \in \vec{E}$ instead.
\end{definition}

These definitions are illustrated in \cref{fig:condC}.

\begin{definition}
	\label{def:condC}
	Let $H \subset \A^\Z$ be a one-dimensional nearest-neighbor SFT. We say that $H$ \emph{verifies condition $C$} if $\mathcal{G}(H)=(\mathcal{V},\vec{E})$ contains two cycles $C^1$ and $C^2$, of elements denoted respectively $c^1_i$ and $c^2_j$, with the following properties:
	
	\begin{enumerate}[(i)]
		\item $|C^1| \geq 3$;
		
		\item $C^1$ and $C^2$ contain a good pair;
		
		\item There is no uniform shortcut neither in $C^1$ nor in $C^2$;
		
		\item There is no cross-bridge between $C^1$ and $C^2$;
		
		\item $C^1$ has either no attractive vertex in $C^1 \cup C^2$ or no repulsive vertex in $C^1 \cup C^2$.
	\end{enumerate}

	Some vertices can be \emph{repeated}; that is, these cycles can go several times through the same vertex as long as they verify the aforementioned properties.
\end{definition}

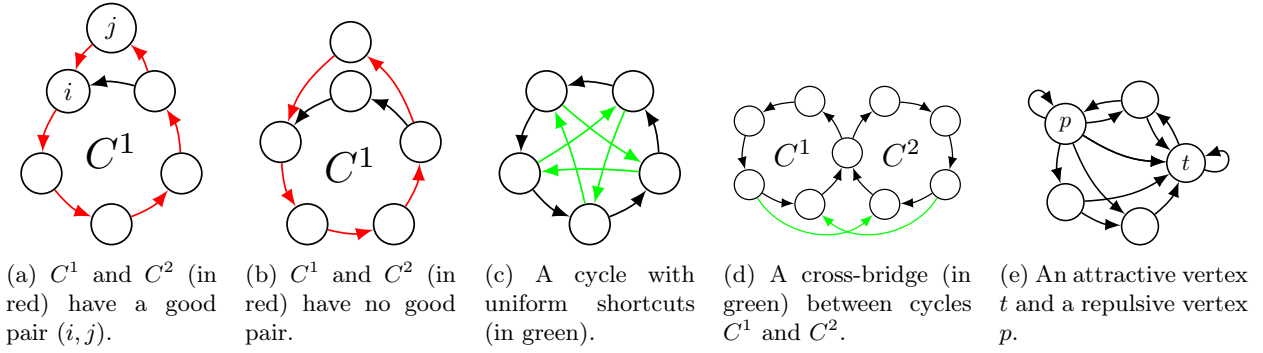
\begin{figure}[t]
	\centering
	\begin{subfigure}[t]{0.17\textwidth}
		\centering
		\resizebox{\columnwidth}{!}{
			\begin{tikzpicture}[scale=0.4,rotate=-90]
			\def \radius {1.5cm};
			
			\node[draw, circle] (a) at (0:\radius) {};
			\node[draw, circle] (b) at (72:\radius) {};
			\node[draw, circle] (c) at (144:\radius) {};
			\node[draw, circle,scale=0.6] (d) at (216:\radius) {$i$};
			\node[draw, circle] (e) at (288:\radius) {};
			
			\node[draw, circle,scale=0.6] (f) at (-2.5,0) {$j$};
			
			\draw (0,0) node {$C^1$};
			
			\draw[->, >=latex,color=red] (a) to[bend right=15] (b);
			\draw[->, >=latex,color=red] (b) to[bend right=15] (c);
			\draw[->, >=latex] (c) to[bend right=15] (d);
			\draw[->, >=latex,color=red] (d) to[bend right=15] (e);
			\draw[->, >=latex,color=red] (e) to[bend right=15] (a);
			
			\draw[->, >=latex,color=red] (c) to[bend right=15] (f);
			\draw[->, >=latex,color=red] (f) to[bend right=15] (d);
			\end{tikzpicture}
		}
		\caption{$C^1$ and $C^2$ (in red) have a good pair $(i,j)$.}
		\label{condC1}
	\end{subfigure}
	\hfill
	\begin{subfigure}[t]{0.17\textwidth}
		\centering
		\resizebox{\columnwidth}{!}{
			\begin{tikzpicture}[scale=0.4,rotate=90]
			\def \radius {1.5cm};
			
			\node[draw, circle] (a) at (0:\radius) {};
			\node[draw, circle] (b) at (72:\radius) {};
			\node[draw, circle] (c) at (144:\radius) {};
			\node[draw, circle] (d) at (216:\radius) {};
			\node[draw, circle] (e) at (288:\radius) {};
			
			\node[draw, circle] (f) at (0:2.5cm) {};
			
			\draw (0,0) node {$C^1$};
			
			\draw[->, >=latex] (a) to[bend right=15] (b);
			\draw[->, >=latex,color=red] (b) to[bend right=15] (c);
			\draw[->, >=latex,color=red] (c) to[bend right=15] (d);
			\draw[->, >=latex,color=red] (d) to[bend right=15] (e);
			\draw[->, >=latex] (e) to[bend right=15] (a);
			
			\draw[->, >=latex,color=red] (e) to[bend right=15] (f);
			\draw[->, >=latex,color=red] (f) to[bend right=15] (b);
			\end{tikzpicture}
		}
		\caption{$C^1$ and $C^2$ (in red) have no good pair.}
		\label{condC2}
	\end{subfigure}
	\hfill
	\begin{subfigure}[t]{0.17\textwidth}
		\centering
		\resizebox{\columnwidth}{!}{
			\begin{tikzpicture}[scale=0.4,rotate=-90]
			\def \radius {1.5cm};
			
			\node[draw, circle] (a) at (0:\radius) {};
			\node[draw, circle] (b) at (72:\radius) {};
			\node[draw, circle] (c) at (144:\radius) {};
			\node[draw, circle] (d) at (216:\radius) {};
			\node[draw, circle] (e) at (288:\radius) {};
			
			\draw[->, >=latex] (a) to[bend right=15] (b);
			\draw[->, >=latex] (b) to[bend right=15] (c);
			\draw[->, >=latex] (c) to[bend right=15] (d);
			\draw[->, >=latex] (d) to[bend right=15] (e);
			\draw[->, >=latex] (e) to[bend right=15] (a);
			
			\draw[->, >=latex,color=green] (a) to[bend right=5] (d);
			\draw[->, >=latex,color=green] (b) to[bend right=5] (e);
			\draw[->, >=latex,color=green] (c) to[bend right=5] (a);
			\draw[->, >=latex,color=green] (d) to[bend right=5] (b);
			\draw[->, >=latex,color=green] (e) to[bend right=5] (c);
			\end{tikzpicture}
		}
		\caption{A cycle with uniform shortcuts (in green).}
		\label{condC3}
	\end{subfigure}
	\hfill
	\begin{subfigure}[t]{0.2\textwidth}
		\centering
		\resizebox{\columnwidth}{!}{
			\begin{tikzpicture}[scale=0.4]
			\def \radius {1.5cm};
			
			\node[draw, circle] (a) at (0:\radius) {};
			\node[draw, circle] (b) at (72:\radius) {};
			\node[draw, circle] (c) at (144:\radius) {};
			\node[draw, circle] (d) at (216:\radius) {};
			\node[draw, circle] (e) at (288:\radius) {};
			
			\begin{scope}[xshift=3cm,xscale=-1]
			\node[draw, circle] (f) at (72:\radius) {};
			\node[draw, circle] (g) at (144:\radius) {};
			\node[draw, circle] (h) at (216:\radius) {};
			\node[draw, circle] (i) at (288:\radius) {};
			\draw (0,0) node {$C^2$};
			\end{scope}
			
			\draw (0,0) node {$C^1$};
			
			\draw[->, >=latex] (a) to[bend right=15] (b);
			\draw[->, >=latex] (b) to[bend right=15] (c);
			\draw[->, >=latex] (c) to[bend right=15] (d);
			\draw[->, >=latex] (d) to[bend right=15] (e);
			\draw[->, >=latex] (e) to[bend right=15] (a);
			
			\draw[->, >=latex] (a) to[bend left=15] (f);
			\draw[->, >=latex] (f) to[bend left=15] (g);
			\draw[->, >=latex] (g) to[bend left=15] (h);
			\draw[->, >=latex] (h) to[bend left=15] (i);
			\draw[->, >=latex] (i) to[bend left=15] (a);
			
			\draw[->, >=latex,color=green] (h) to[bend left=50] (e);
			\draw[->, >=latex,color=green] (d) to[bend right=50] (i);
			\end{tikzpicture}
		}
		\caption{A cross-bridge (in green) between cycles $C^1$ and $C^2$.}
		\label{condC4}
	\end{subfigure}
	\hfill
	\begin{subfigure}[t]{0.2\textwidth}
		\centering
		\resizebox{\columnwidth}{!}{
			\begin{tikzpicture}[scale=0.4]
			\def \radius {1.5cm};
			
			\node[draw, circle,scale=0.6] (a) at (0:\radius) {$t$};
			\node[draw, circle] (b) at (72:\radius) {};
			\node[draw, circle,scale=0.6] (c) at (144:\radius) {$p$};
			\node[draw, circle] (d) at (216:\radius) {};
			\node[draw, circle] (e) at (288:\radius) {};
			
			\draw[->, >=latex] (a) to[bend right=15] (b);
			\draw[->, >=latex] (b) to[bend right=15] (c);
			\draw[->, >=latex] (c) to[bend right=15] (d);
			\draw[->, >=latex] (d) to[bend right=15] (e);
			\draw[->, >=latex] (e) to[bend right=15] (a);
			
			\draw[->, >=latex] (b) to[bend right=15] (a);
			\draw[->, >=latex] (c) to[bend right=15] (a);
			\draw[->, >=latex] (d) to[bend right=15] (a);
			
			\draw[->, >=latex] (c) to[bend right=15] (e);
			\draw[->, >=latex] (c) to[bend right=15] (a);
			\draw[->, >=latex] (c) to[bend right=15] (b);
			
			\draw[->, >=latex] (c) to[out=164,in=124,looseness=6] (c);
			\draw[->, >=latex] (a) to[out=-20,in=20,looseness=6] (a);
			\end{tikzpicture}
		}
		\caption{An attractive vertex $t$ and a repulsive vertex $p$.}
		\label{condC5}
	\end{subfigure}
	\caption{Cases of compliance or not with elements of Condition $C$.}
	\label{fig:condC}
\end{figure}

\begin{proposition}
	\label{propcondC}
	If $\mathcal{G}(H)$ verifies condition $C$, and $W$ is a Wang shift, then there exists an explicit SFT $V_W \subset \A^\Z$ such that $X_{H,V_W}$ is a root of $W$.
\end{proposition}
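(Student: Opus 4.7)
The plan is to adapt the block-encoding strategy of \cref{rootsubsystem} to a setting where $H$ is a nearest-neighbor one-dimensional SFT and only vertical constraints may be added. Enumerate the Wang tiles of $W$ as $t_1,\dots,t_n$ and set $M=\mathrm{lcm}(|C^1|,|C^2|)$ and $N=nM$. Using the good pair $(i,j)$ granted by property~(ii), construct $n$ macro-words $U_1,\dots,U_n\in\mathcal{L}_H$, each of length $N$, as follows: start from the template word that traverses $C^1$ exactly $N/|C^1|$ times, and define $U_k$ by replacing, inside its $k$-th length-$M$ block, the disagreement segment $c^1_ic^1_{i+1}\cdots c^1_{i+l}$ by $c^2_jc^2_{j+1}\cdots c^2_{j+l}$. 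The agreement relations $c^1_{i+l+1}=c^2_{j+l+1},\dots,c^1_{i-1}=c^2_{j-1}$ provided by the good pair make the substitution compatible with the nearest-neighbor edges of $\mathcal{G}(H)$, so each $U_k$ lies in $\mathcal{L}_H$ and arbitrary horizontal concatenations of the $U_k$'s also lie in $\mathcal{L}_H$; property~(i) guarantees the blocks have enough internal structure to be told apart.

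The vertical rule set $V_W$ has three jobs. First, it must vertically align the starts of the $U_k$ blocks across rows. For this we use the distinguishing letters of the disagreement segment to define vertical two-letter forbidden patterns: property~(v), asserting that $C^1$ has either no attractive or no repulsive vertex in $C^1\cup C^2$, supplies a distinguishing letter $a$ together with another letter $b$ such that a vertical pattern of the form ``$a$ above $b$'' (or its reverse) propagates the column index of a block start from one row to the next. Second, it must rule out rows that are not concatenations of $U_k$'s: property~(iii), the absence of uniform shortcuts in $C^1$ and $C^2$, ensures that no horizontal cycle can traverse the same distinguishing sequence with a shifted pace, so that once the column offsets are locked the rows are forced into the $U_k$-decomposition. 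Third, it must encode the matching rules of $W$: horizontal matching is already built into the choice of which $U_k$'s may be listed syntactically in $\mathcal{L}_H$, while vertical matching is implemented by listing, for each pair $(t_k,t_{k'})$ forbidden vertically in $W$, the corresponding vertical two-letter patterns taken at the distinguishing columns of $U_k$ and $U_{k'}$. Property~(iv), the absence of a cross-bridge between $C^1$ and $C^2$, is crucial here: without it, a row could slip from $C^1$ into $C^2$ at a non-good-pair location and produce a spurious block not corresponding to any tile, defeating the bijection with $W$.

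Finally, set $Z\subset X_{H,V_W}$ to be the clopen subset of configurations in which a macro-word $U_k$ starts at column $0$ of every row, and let $\varphi\colon Z\to W$ send the block filling columns $[pN,(p+1)N-1]$ of row $q$ to the Wang tile $t_k$ it encodes. Then $\varphi$ is a shift-intertwining homeomorphism for the $(N,1)$-rescaling, and $X_{H,V_W}=\bigsqcup_{0\leq i<N}\sigma^{(i,0)}(Z)$, so $X_{H,V_W}$ is an $(N,1)$-th root of $W$ in the sense of \cref{subsec:root}. The main obstacle is the vertical alignment of the block boundaries: since we are restricted to two-letter vertical patterns, we have no direct way of expressing ``the block boundary in row $q+1$ lies in the same column as in row $q$''. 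All leverage has to come from the combinatorics of $\mathcal{G}(H)$, and this is exactly where the clauses of condition~$C$ intervene together---properties (iii), (iv), (v) each rule out a distinct way in which two rows of $H$ could stack while silently drifting the column index of the block structure, and it is only once all three are forbidden that the vertical propagation of column alignment through $V_W$ can be made watertight.
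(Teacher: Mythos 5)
Your proposal sets up a construction that is dual to the one the paper actually uses, and in the process runs into exactly the obstacle you acknowledge at the end without resolving it. You try to carry over the horizontal-block encoding of \cref{rootsubsystem}: make each row a concatenation of $n$ horizontal macro-words $U_1,\dots,U_n$ of length $N$, and let $V_W$ enforce vertical alignment and the Wang adjacency rules. The paper does the opposite: it builds the block structure \emph{vertically}. Each column is forced by $V_W$ to be a concatenation of ordered macro-slices of height $KMN$ (with $K=2|C^1|+|C^2|+3$, $M=\mathrm{LCM}(|C^1|,|C^2|)$, $N=|\tau|$), each consisting of a border meso-slice, two $C^1$-slices, a $C^2$-slice, another border meso-slice, and a code meso-slice encoding one Wang tile; the resulting $X_{H,V_W}$ is an $(M,KMN)$-th root of $W$. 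This orientation is essential: $V_W$ is a one-dimensional subshift acting column-by-column, so it can directly impose a rich vertical structure, but it has no handle whatsoever on the horizontal decomposition of a single row. The difficulty is then entirely shifted to showing that the nearest-neighbor constraints of $H$ alone force adjacent columns to be aligned, synchronized, and to transmit the encoded tile faithfully, which is exactly what \cref{propapproxaligned}, \cref{propsync} and \cref{proptransmission} establish, and it is there, and only there, that clauses (iii)--(v) of condition $C$ intervene.

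This is also where your proposal has a genuine gap. You correctly identify that a one-dimensional vertical SFT cannot directly express ``the block boundary in row $q+1$ lies in the same column as in row $q$,'' but you never construct $V_W$ nor give a mechanism by which vertical patterns plus $H$ force every row to be a $U_k$-concatenation in the first place. Take a row $r\in H$ that is not a $U_k$-concatenation; nothing in your sketch shows that $r$ cannot be extended vertically while satisfying whatever finite family of vertical forbidden patterns $V_W$ is meant to be. Your appeals to properties (iii), (iv), (v) are descriptive rather than operational: you state what each ought to accomplish (no drift, no spurious block, alignment propagates), but not how a set of column-only rules achieves it. In the paper, by contrast, every column is tautologically a sequence of macro-slices because $V_W$ says so, and (iii)--(v) are then used in a precise local way: no uniform shortcut rules out a constant vertical slip between the repeated $C^1$-slices of adjacent columns; no cross-bridge protects transmission of the coded tile across the $C^1/C^2$ interface in the code meso-slice; and the attractive/repulsive hypothesis prevents the border meso-slices of adjacent columns from sitting at a vertical offset. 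Your sketch would need an analogous verifiable sequence of local arguments adapted to a horizontal encoding, which it does not supply; and since the horizontal encoding deprives $V_W$ of its only leverage (the column structure), it is doubtful such an adaptation exists without in effect reverting to the paper's vertical construction.
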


The rest of the subsection is devoted to proving this result.

Let $H$ with $\mathcal{G}(H)$ verifying condition $C$. We focus on encoding, in the correct $X_{H,V}$, a full shift on an alphabet $\tau$ of cardinality $N$. Then, the possibility to add vertical rules will allow us to encode any Wang shift $W$ using this alphabet, that is, to simulate the configurations of $W$ as a root of $X_{H,V}$.

For the rest of the construction, we name $M := LCM(|C^1|,|C^2|)$ and $K := 2 |C^1| + |C^2| + 3$. We suppose that $N \geq 2$.
Indeed, the case of encoding a monotile Wang shift is easy: consider only the cycle $C^1$ in $\mathcal{G}(H)$, that may have extra edges from one element to another, but no uniform shortcut. Build $V$ the vertical SFT from the graph $\mathcal{G}(H)^\prime$ obtained by removing any of those extra edges, keeping only a plain cycle -- the same as $C^1$. Then $X_{H,V}$ contains only the translate of one configuration, that cycles through the elements of $C^1$ in the correct order, both horizontally and vertically. This is a root of a monotile Wang shift.

We refer to \cref{columns1}
in the description that follows. We use the term \emph{slice} as a truncation of a column: it is a part of width $1$ and of finite height. We use the following more specific denominations for the various scales of our construction:
\begin{itemize}
	\item A \emph{macro-slice} is a slice of height $KMN$. Any column of $X_{H,V}$ will merely be made of a succession of some specific macro-slices called ordered macro-slices (see below).
	\item A \emph{meso-slice} is a slice of height $MN$; meso-slices are assembled into macro-slices.
	\item A \emph{micro-slice} is a slice of height $N$. This subdivision is used inside specific meso-slices called code meso-slices (see below).
\end{itemize}
Although any scale of slice could denote any truncation of column of the right size, we focus on specific slices that are meaningful because of what they contain, so that we can assemble them precisely. They are:
\begin{itemize}
	\item An $(i,j)$ $k$-coding micro-slice is a micro-slice composed of $N-1$ symbols $c^1_i$ and one symbol $c^2_j$ at position $k$. It encodes the $k$th tile of alphabet $\tau$, unless $c^1_i=c^2_j$: in that case, it is called a \emph{buffer} and encodes nothing. We can write ``$(i,j)$ coding micro-slice'' when we do not want to specify which tile is encoded.
	
	\item An $(i_0,j_0)$ $(k,l)$-code meso-slice is a meso-slice made of $M$ vertically successive coding micro-slices. The one on top is an $(i_0,j_0)$ $k$-coding micro-slice.
	We add the following restrictions:
		\begin{itemize}
			\item an $(i,j)$ $k$-coding micro-slice must be vertically followed by a $(i+1,j+1)$ $k$-coding micro-slice, unless $c^1_i \neq c^2_j$ but $c^1_{i+1} = c^2_{j+1}$, that is if the new micro-slice is a buffer but the previous one is not;
			\item if $c^1_i = c^2_j$ but $c^1_{i+1} \neq c^2_{j+1}$, then the buffer must be followed by a $(i+1,j+1)$ $l$-coding micro-slice.
		\end{itemize}
	We can write ``$(i_0,j_0)$ code meso-slice'' when we do not want to specify which tiles are encoded.
	
	\item An $i$-border meso-slice is made of $\frac{M}{|C^1|} N$ times the vertical repetition of all the elements of the cycle $C^1$, starting with $c^1_i$.
	
	\item A $c^1_i$ meso-slice is made of $MN$ times the vertical repetition of element $c^1_i$, denoted $(c^1_i)^MN$ in \cref{columns1}. The same definition holds for a $c^2_j$ meso-slice.
	
	\item The succession of a $c^1_i$ meso-slice, then a $c^1_{i+1}$ meso-slice, ..., then a $c^1_{i-1}$ meso-slice is called a $i$ $C^1$-slice. It is of height $MN|C^1|$. Similarly, we define a $j$ $C^2$-slice (of height $MN|C^2|$).
	
	\item Finally, a $(i,j)$-ordered $(k,l)$-coding macro-slice is the succession of a $i$-border meso-slice, a $i$ $C^1$-slice, a second $i$ $C^1$-slice, a $j$ $C^2$-slice, a $i$-border meso-slice, and finally a $(i,j)$ $(k,l)$-code meso-slice.
	We can write ``$(i,j)$-ordered macro-slice'' when we do not want to specify which tiles are encoded.
\end{itemize}

\begin{remark}
	The $(i_0,j_0)$ $(k,l)$-code meso-slice is well-defined because since $C^1$ and $C^2$ contain a good pair, a code meso-slice is a vertical succession of coding micro-slide and of buffers, with either only one vertical succession of coding micro-slice of one vertical succession of buffers (possibly both). Therefore, there can be at most one change, from $k$ to $l$, in the tiles encoded; $k$ is then called the \emph{main-coded tile}, and $l$ the \emph{side-coded tile}.
	
	Note that if $c^1_{i_0} \neq c^2_{j_0}$ but $c^1_{i_0-1} = c^2_{j_0-1}$, then the meso-slice contains no side-coded tile -- its last coding micro-slice is the last buffer -- so $l$ is actually irrelevant.
	Conversely, if $c^1_{i_0} = c^2_{j_0}$, then the meso-slice contains no main-coded tile, so $k$ is actually irrelevant.
\end{remark}

Now, the patterns we authorize in $V$ are the $(i_0+p,j_0+p)$-ordered macro-slices with a good pair $(i_0,j_0)$ and $p \in \{0, \dots, M-1\}$, and all patterns that allow the vertical juxtaposition of two $(i,j)$-ordered macro-slices, using the same $i$ and $j$, but possibly different code meso-slices.
We prove below that this is enough for our resulting $X_{H,V}$ to simulate a full shift on $\tau$.

\begin{figure}[p]
	\centering
	\begin{subfigure}[t]{0.55\textwidth}
		\centering
		\resizebox{\columnwidth}{!}{
			\begin{tikzpicture}[scale=1
			,rotate=-90,yscale=-1
			]
			
			
			\fill[color=black!20] (-3,-1) rectangle (8,-2);
			\draw (-3,-1) rectangle(8,-2);

			
			\draw(-2.5,-1.5) node[scale=1] {$c_i^1$};
			\draw(-1.5,-1.5) node[scale=1] {$c_{i+1}^1$};
			\draw(-0.5,-1.5) node[scale=0.75] {$\dots$};
			\draw(0.5,-1.5) node[scale=1] {$c_{i-1}^1$};
			\draw(1.5,-1.5) node[scale=1] {$c_{i}^1$};
			\draw(2.5,-1.5) node[scale=1] {$c_{i+1}^1$};
			\draw(3.5,-1.5) node[scale=0.75] {$\dots$};
			\draw(4.5,-1.5) node[scale=1] {$c_{i-1}^1$};
			\draw(5.5,-1.5) node[scale=0.75] {$\dots$};
			\draw(6.5,-1.5) node[scale=0.75] {$\dots$};
			\draw(7.5,-1.5) node[scale=1] {$c_{i-1}^1$};

			
			\draw (-3,-1) -- (0,0);
			\draw (8,-1) -- (1,0);

			\fill[color=black!20] (0,0) rectangle (1,1);
			\fill[color=black!20] (13,0) rectangle (14,1);
			
			\fill[color=black!10](1,0)rectangle(9,1);
			\fill[color=black!5](9,0)rectangle(13,1);
			
			\draw (0,0) rectangle (15,1);
			\foreach \k in {1,...,14}
			{
				\draw (\k,0)--(\k,1);
			};

			\draw(-1,0.7) node[scale=1,align=center] {$(i,j)$-ordered\\macro-slice};
			
			\draw(0.5,0.5) node[scale=0.75] {border};
			\draw(1.5,0.5) node[scale=0.65] {$(c_i^1)^{MN}$};
			\draw(2.5,0.5) node[scale=0.65] {$(c_{i+1}^1)^{MN}$};
			\draw(3.5,0.5) node[scale=0.75] {$\dots$};
			\draw(4.5,0.5) node[scale=0.65] {$(c_{i-1}^1)^{MN}$};
			\draw(5.5,0.5) node[scale=0.65] {$(c_i^1)^{MN}$};
			\draw(6.5,0.5) node[scale=0.65] {$(c_{i+1}^1)^{MN}$};
			\draw(7.5,0.5) node[scale=0.75] {$\dots$};
			\draw(8.5,0.5) node[scale=0.65] {$(c_{i-1}^1)^{MN}$};
			\draw(9.5,0.5) node[scale=0.65] {$(c_{j}^2)^{MN}$};
			\draw(10.5,0.5) node[scale=0.65] {$(c_{j+1}^2)^{MN}$};
			\draw(11.5,0.5) node[scale=0.75] {$\dots$};
			\draw(12.5,0.5) node[scale=0.65] {$(c_{j-1}^2)^{MN}$};
			\draw(13.5,0.5) node[scale=0.75] {border};
			\draw(14.5,0.5) node[scale=0.75] {code};
			
			\draw [decorate,decoration={brace,amplitude=5pt,mirror,raise=3ex}]
			(1,0.75) -- (5,0.75) node[midway,yshift=-3em]{};
			
			\draw(3,2.3) node[scale=1] {$i$ $C^1$-slice};
			
			\draw [decorate,decoration={brace,amplitude=5pt,mirror,raise=3ex}]
			(9,0.75) -- (13,0.75) node[midway,yshift=-3em]{};
			
			\draw(11,2.3) node[scale=1] {$j$ $C^2$-slice};
			
			
			\draw(14,0)--(10,-1);
			\draw(15,0)--(17,-1);
			
			\draw (10,-2) rectangle(17,-1);
			\foreach \k in {11,...,16}
			{
				\draw (\k,-2)--(\k,-1);
			};

			\draw(9,-1.5) node[scale=1] {meso-slices};
			
			\draw(10.5,-1.5) node[scale=0.75,align=center] {$\tau_k$\\$i$\\$j$};
			\draw(11.5,-1.5) node[scale=0.75,align=center] {$\tau_k$\\$i+1$\\$j+1$};
			\draw(12.5,-1.5) node[scale=0.75] {$\dots$};
			\draw(13.5,-1.5) node[scale=0.75,align=center] {$\tau_k$\\$i-4$\\$j-4$};
			\fill[pattern=north west lines, pattern color=black!30] (14,-2) rectangle (15,-1);
			\fill[pattern=north west lines, pattern color=black!30] (15,-2) rectangle (16,-1);
			\draw(16.5,-1.5) node[scale=0.75,align=center] {$\tau_\ell$\\$i-1$\\$j-1$};

			
			\draw(7,-3)--(11,-2);
			\draw(16,-3)--(12,-2);
			
			\draw (7,-4) rectangle(16,-3);
			
			\draw(6,-3.5) node[scale=1,align=center] {$(i+1,j+1)$\\$k$-coding\\micro-slice};
			
			\draw(7.5,-3.5) node[scale=1] {$c_{i+1}^1$};
			\draw(8.5,-3.5) node[scale=1] {$c_{i+1}^1$};
			\draw(9.5,-3.5) node[scale=0.75] {$\dots$};
			\draw(10.5,-3.5) node[scale=1] {$c_{i+1}^1$};
			\draw(11.5,-3.5) node[scale=1] {$c_{j+1}^2$};
			\draw(12.5,-3.5) node[scale=1] {$c_{i+1}^1$};
			\draw(13.5,-3.5) node[scale=1] {$c_{i+1}^1$};
			\draw(14.5,-3.5) node[scale=0.75] {$\dots$};
			\draw(15.5,-3.5) node[scale=1] {$c_{i+1}^1$};
			
			\draw[->,>=latex] (11.5,-4.5) -- (11.5,-4.1);
			\draw(11.5,-4.7) node {$k$};
			
			
			\end{tikzpicture}
		}
		\caption{Columns allowed, for $(i,j)$ in the orbit of a good pair. Here $c^1_{i-3}=c^2_{j-3}$ and $c^1_{i-2}=c^2_{j-2}$, forming buffers in the code meso-tile, represented as hatched squares.}
		\label{columns1}
	\end{subfigure}
	\hfill
	\begin{subfigure}[t]{0.3\textwidth}
		\centering
		\resizebox{\columnwidth}{!}{
			\begin{tikzpicture}[scale=1,,rotate=-90,yscale=-1]
			
			\clip[decorate,decoration={random steps,segment length=2pt,amplitude=3pt}] (-0.8, -0.5) rectangle (17.8,3);
			
			
			\begin{scope}[xshift=-3cm]
			\fill[color=black!20] (0,0) rectangle (1,1);
			\fill[color=black!20] (13,0) rectangle (14,1);
			
			\fill[color=black!10] (1,0) rectangle (9,1);
			
			\fill[color=black!5] (9,0) rectangle (13,1);
			
			\draw (0,0) rectangle (15,1);
			\foreach \k in {1,...,14}
			{
				\draw (\k,1)--(\k,0);
			};
			
			\draw[very thick] (1,1)--(1,0);
			\draw[very thick] (5,1)--(5,0);
			\draw[very thick] (9,1)--(9,0);
			\draw[very thick] (13,1)--(13,0);
			
			\draw (0.5,0.5) node[scale=0.75] {border};
			\draw (1.5,0.5) node[scale=0.6] {$(c_{i}^1)^{MN}$};
			\draw (2.5,0.5) node[scale=0.6] {$(c_{i+1}^1)^{MN}$};
			\draw (3.5,0.5) node {$...$};
			\draw (4.5,0.5) node[scale=0.6] {$(c_{i-1}^1)^{MN}$};
			\draw (5.5,0.5) node[scale=0.6] {$(c_{i}^1)^{MN}$};
			\draw (6.5,0.5) node[scale=0.6] {$(c_{i+1}^1)^{MN}$};
			\draw (7.5,0.5) node {$...$};
			\draw (8.5,0.5) node[scale=0.6] {$(c_{i-1}^1)^{MN}$};
			\draw (9.5,0.5) node[scale=0.6] {$(c_{j}^2)^{MN}$};
			\draw (10.5,0.5) node[scale=0.6] {$(c_{j+1}^2)^{MN}$};
			\draw (11.5,0.5) node {$...$};
			\draw (12.5,0.5) node[scale=0.6] {$(c_{j-1}^2)^{MN}$};
			\draw (13.5,0.5) node[scale=0.75] {border};
			\draw (14.5,0.5) node[scale=0.75] {code};
			\end{scope}
			
			\begin{scope}[xshift=12cm]
			\fill[color=black!20] (0,0) rectangle (1,1);
			\fill[color=black!20] (13,0) rectangle (14,1);
			
			\fill[color=black!10] (1,0) rectangle (9,1);
			
			\fill[color=black!5] (9,0) rectangle (13,1);
			
			\draw (0,0) rectangle (15,1);
			\foreach \k in {1,...,14}
			{
				\draw (\k,1)--(\k,0);
			};
			
			\draw[very thick] (1,1)--(1,0);
			\draw[very thick] (5,1)--(5,0);
			\draw[very thick] (9,1)--(9,0);
			\draw[very thick] (13,1)--(13,0);
			
			\draw (0.5,0.5) node[scale=0.75] {border};
			\draw (1.5,0.5) node[scale=0.6] {$(c_{i}^1)^{MN}$};
			\draw (2.5,0.5) node[scale=0.6] {$(c_{i+1}^1)^{MN}$};
			\draw (3.5,0.5) node {$...$};
			\draw (4.5,0.5) node[scale=0.6] {$(c_{i-1}^1)^{MN}$};
			\draw (5.5,0.5) node[scale=0.6] {$(c_{i}^1)^{MN}$};
			\draw (6.5,0.5) node[scale=0.6] {$(c_{i+1}^1)^{MN}$};
			\draw (7.5,0.5) node {$...$};
			\draw (8.5,0.5) node[scale=0.6] {$(c_{i-1}^1)^{MN}$};
			\draw (9.5,0.5) node[scale=0.6] {$(c_{j}^2)^{MN}$};
			\draw (10.5,0.5) node[scale=0.6] {$(c_{j+1}^2)^{MN}$};
			\draw (11.5,0.5) node {$...$};
			\draw (12.5,0.5) node[scale=0.6] {$(c_{j-1}^2)^{MN}$};
			\draw (13.5,0.5) node[scale=0.75] {border};
			\draw (14.5,0.5) node[scale=0.75] {code};
			\end{scope}

			\begin{scope}[xshift=-7.7cm,yshift=1cm]
			\fill[color=black!20] (0,0) rectangle (1,1);
			\fill[color=black!20] (13,0) rectangle (14,1);
			
			\fill[color=black!10] (1,0) rectangle (9,1);
			
			\fill[color=black!5] (9,0) rectangle (13,1);
			
			\draw (0,0) rectangle (15,1);
			\foreach \k in {1,...,14}
			{
				\draw (\k,1)--(\k,0);
			};
			
			\draw[very thick] (1,1)--(1,0);
			\draw[very thick] (5,1)--(5,0);
			\draw[very thick] (9,1)--(9,0);
			\draw[very thick] (13,1)--(13,0);
			
			\draw (0.5,0.5) node[scale=0.75] {border};
			\draw (1.5,0.5) node[scale=0.65] {$(c_{i}^1)^{MN}$};
			\draw (2.5,0.5) node[scale=0.65] {$(c_{i+1}^1)^{MN}$};
			\draw (3.5,0.5) node {$...$};
			\draw (4.5,0.5) node[scale=0.65] {$(c_{i-1}^1)^{MN}$};
			\draw (5.5,0.5) node[scale=0.65] {$(c_{i}^1)^{MN}$};
			\draw (6.5,0.5) node[scale=0.65] {$(c_{i+1}^1)^{MN}$};
			\draw (7.5,0.5) node {$...$};
			\draw (8.5,0.5) node[scale=0.65] {$(c_{i-1}^1)^{MN}$};
			\draw (9.5,0.5) node[scale=0.65] {$(c_{j}^2)^{MN}$};
			\draw (10.5,0.5) node[scale=0.65] {$(c_{j+1}^2)^{MN}$};
			\draw (11.5,0.5) node {$...$};
			\draw (12.5,0.5) node[scale=0.65] {$(c_{j-1}^2)^{MN}$};
			\draw (13.5,0.5) node[scale=0.75] {border};
			\draw (14.5,0.5) node[scale=0.75] {code};
			\end{scope}
			
			\begin{scope}[xshift=7.3cm,yshift=1cm]
			\fill[color=black!20] (0,0) rectangle (1,1);
			\fill[color=black!20] (13,0) rectangle (14,1);
			
			\fill[color=black!10] (1,0) rectangle (9,1);
			
			\fill[color=black!5] (9,0) rectangle (13,1);
			
			\draw (0,0) rectangle (15,1);
			\foreach \k in {1,...,14}
			{
				\draw (\k,1)--(\k,0);
			};
			
			\draw[very thick] (1,1)--(1,0);
			\draw[very thick] (5,1)--(5,0);
			\draw[very thick] (9,1)--(9,0);
			\draw[very thick] (13,1)--(13,0);
			
			\draw (0.5,0.5) node[scale=0.75] {border};
			\draw (1.5,0.5) node[scale=0.65] {$(c_{i}^1)^{MN}$};
			\draw (2.5,0.5) node[scale=0.65] {$(c_{i+1}^1)^{MN}$};
			\draw (3.5,0.5) node {$...$};
			\draw (4.5,0.5) node[scale=0.65] {$(c_{i-1}^1)^{MN}$};
			\draw (5.5,0.5) node[scale=0.65] {$(c_{i}^1)^{MN}$};
			\draw (6.5,0.5) node[scale=0.65] {$(c_{i+1}^1)^{MN}$};
			\draw (7.5,0.5) node {$...$};
			\draw (8.5,0.5) node[scale=0.65] {$(c_{i-1}^1)^{MN}$};
			\draw (9.5,0.5) node[scale=0.65] {$(c_{j}^2)^{MN}$};
			\draw (10.5,0.5) node[scale=0.65] {$(c_{j+1}^2)^{MN}$};
			\draw (11.5,0.5) node {$...$};
			\draw (12.5,0.5) node[scale=0.65] {$(c_{j-1}^2)^{MN}$};
			\draw (13.5,0.5) node[scale=0.75] {border};
			\draw (14.5,0.5) node[scale=0.75] {code};
			\end{scope}
			
			\end{tikzpicture}
		}
		\caption{Faulty alignment of adjacent columns.}
		\label{fig:aligned}
	\end{subfigure}
	\caption{The generic construction.}
	\label{generic}
\end{figure}

We say that two legally adjacent columns are \emph{aligned} if they are subdivided into ordered macro-slices exactly on the same lines. We say that two adjacent and aligned columns are \emph{synchronized} if any $(i,j)$-ordered macro-slice of the first one is followed by a $(i+1,j+1)$-ordered macro-slice in the second one.

\begin{proposition}
	\label{propapproxaligned}
	In this construction, two legally adjacent columns are aligned up to a vertical translation of size at most $2 |C^1|-1$ of one of the columns.
\end{proposition}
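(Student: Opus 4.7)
The plan is to argue by contraposition: suppose two columns $x_1,x_2$ are legally adjacent in $X_{H,V}$ yet their macro-slice structures are offset vertically by more than $2|C^1|-1$ rows, and derive a contradiction against one of conditions (ii)--(v) of condition $C$. The key constraint I will exploit throughout is that $H$ is nearest-neighbor, so at every row $y$ the pair $(x_1(y),x_2(y))$ must be an edge of $\mathcal{G}(H)$.

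The core comparison is between border meso-slices (height $MN$, traversing all of $C^1$ once every $|C^1|$ rows) and $C^1$-slices (height $MN|C^1|$, in which each element of $C^1$ is held constant for $MN$ consecutive rows) across the two columns. First I would treat the case where a border meso-slice in one column lies opposite a constant $(c^1_i)^{MN}$-block of the other: then $c^1_i$ must be joined horizontally to every element of $C^1$, forcing it to be either an attractive or a repulsive vertex for $C^1$ depending on which column contains the border. A misalignment exceeding $2|C^1|-1$ rows would eventually make \emph{both} an attractive and a repulsive such vertex appear in $C^1 \cup C^2$, contradicting condition (v). I would then handle the case where both columns are simultaneously in $C^1$-slice mode but with a phase shift $\delta\neq 0$ inside an $MN$-block: gathering the resulting horizontal edges produces a uniform shift by some $s \not\equiv 1 \pmod{|C^1|}$ on $C^1$, i.e.\ a uniform shortcut, contradicting condition (iii).

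The remaining overlaps are the ``cross'' cases, in which a $C^2$-slice or a code meso-slice faces a $C^1$-related region. Here the good-pair condition (ii) shows that each coding micro-slice consists of a single $c^2_j$ among $N-1$ copies of $c^1_i$, so long stretches of code meso-slices still behave essentially like $C^1$-slices and fall under the previous argument. The absence of cross-bridges between $C^1$ and $C^2$ (condition (iv)) is used to rule out the horizontal edges required by a misaligned overlap of a $C^1$-slice of one column with a $C^2$-slice of the other. A finite case analysis over the possible subregion-vs.-subregion overlaps, anchored on the $MN$-blocks of the $C^1$-slices and the $|C^1|$-periodic border meso-slices, then shows that any offset greater than $2|C^1|-1$ triggers one of the above contradictions.

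I expect the principal obstacle to be the precise bookkeeping isolating the sharp constant $2|C^1|-1$. This value reflects the fact that a border meso-slice completes one full revolution of $C^1$ every $|C^1|$ rows, so an offset below $|C^1|$ can be absorbed inside a single revolution; the factor of two together with the $-1$ account for boundary slack between adjacent $MN$-blocks inside a $C^1$-slice. Verifying rigorously that this slack cannot be tightened further, and that no weaker subset of condition $C$ would suffice, is the most delicate step.
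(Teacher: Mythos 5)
Your core idea matches the paper's: use the nearest-neighbor constraint at the boundary between the two columns, exploit that a border meso-slice sweeps through all of $C^1$ every $|C^1|$ rows, deduce that if a misaligned border of $K_1$ faces a constant $(t)^{MN}$-block of $K_2$ for at least $|C^1|$ consecutive rows then $t$ is an attractive vertex (or repulsive, if the roles are swapped), and then invoke condition (v). That part of your proposal is sound and is exactly how the paper concludes.

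However, there are two concrete issues. First, you fold in cases that do not belong to this proposition. The scenario where ``both columns are in $C^1$-slice mode with a phase shift $\delta \neq 0$ inside an $MN$-block'' is precisely what \cref{propapproxaligned} \emph{allows} (it only guarantees alignment up to translation by $2|C^1|-1$); it is ruled out afterwards, in \cref{propsync}, using the absence of uniform shortcuts. Likewise the cross-bridge condition (iv) plays no role here and is reserved for \cref{proptransmission}. Invoking them does no mathematical harm, but it signals a confusion about what this proposition is responsible for: it only establishes \emph{approximate} alignment, with the fine-grained row-level synchronization coming later.

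Second, and more seriously, your handling of the case where the misaligned border meso-slice faces a code meso-slice is not correct. You argue that ``long stretches of code meso-slices still behave essentially like $C^1$-slices.'' This fails because a code meso-slice holds a fixed $c^1_i$ (modulo the single $c^2_j$ inserted per micro-slice) for only $N$ consecutive rows, not $MN$, and nothing in the hypotheses forces $N \geq |C^1|$ — the construction only assumes $N \geq 2$. So a window of $|C^1|$ consecutive rows of the border need not see a constant element on the $K_2$ side when facing a code meso-slice. The paper sidesteps this entirely: if the displaced border of $K_1$ faces a code meso-slice, one simply passes to the \emph{other} border meso-slice of the same macro-slice of $K_1$, which by counting positions inside the macro-slice cannot also face a code meso-slice and therefore must face a constant $c^1_i$- or $c^2_j$-block; the attractive/repulsive argument then applies to that border instead. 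You would need either this ``switch border'' step or an additional hypothesis relating $N$ and $|C^1|$ to close the gap.
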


\begin{proof}
	If two columns, call them $K_1$ and $K_2$, can be legally juxtaposed such that they are not aligned even when vertically shifted by $2 |C^1|-1$ elements, it means that one of the border meso-slices of $K_1$ has at least $2 |C^1|$ vertically consecutive elements that are horizontally followed by something that is not a border meso-slice in $K_2$ (see \cref{fig:aligned}). Since $2|C^1| < MN$ which is the length of a meso-slice, at least $|C^1|$ successive elements among the ones of the border meso-slice in $K_1$ are horizontally followed by elements that are part of the same meso-slice in $K_2$. If this is a code meso-slice, simply consider the other border meso-slice of $K_1$ (the first you can find, above or below, before repeating the pattern cyclically): that one must be in contact with a $c^1_i$ or $c^2_j$ meso-slice instead. 
	Either way, we obtain that a border meso-slice has at least $|C^1|$ successive elements that are horizontally followed by some $t$ meso-slice made of a single element $t$. Hence if we suppose that juxtaposing $K_1$ and $K_2$ this way is legal, it means that in $H$ all the elements of $C^1$ lead to $t$, i.e $t$ is an attractive vertex. Either this is forbidden, or the "reverse" reasoning where we focus on the borders of $K_2$ proves that there is also an element $p$ used in a $C^i, i \in \{1,2\}$ slice of $K_1$ that leads to every element of $C^1$; that is, $C^1$ has a repulsive vertex in $C^1 \cup C^2$. Condition C forbids any graph that had both, hence we reach a contradiction. We obtain the proposition we announced.
\end{proof}

\begin{proposition}
	\label{propsync}
	In this construction, two legally adjacent columns are always aligned and synchronized.
\end{proposition}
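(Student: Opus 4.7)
The plan is to strengthen Proposition \ref{propapproxaligned} (which only bounds the vertical shift $s$ between two legally adjacent columns by $|s|\leq 2|C^1|-1$) by applying Condition $C$(iii) — forbidding uniform shortcuts — twice: first on the overlap of border meso-slices to pin down the index relationship, then on the overlap of $C^1$-slices to force $s=0$. A further application on the (now aligned) $C^2$-slices will give $j'=j+1$.

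Setup: fix two legally adjacent columns $K_1,K_2$, with $K_1$ $(i,j)$-ordered and $K_2$ $(i',j')$-ordered at corresponding macro-slices, and let $s$ be the vertical shift of $K_2$ relative to $K_1$. By Proposition \ref{propapproxaligned} one can assume $0\leq s\leq 2|C^1|-1$, and since $|C^1|$ is much smaller than $MN$, the first border meso-slices of the two columns overlap on a stretch of length $MN-s\gg |C^1|$. On this overlap both columns cycle through $C^1$ with a combined offset $d:=(i'-i-s)\bmod |C^1|$, so horizontal adjacency forces $(c^1_a,c^1_{a+d})\in\vec{E}$ for every $a\in C^1$. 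By Condition $C$(iii), this must be the trivial cycle step, i.e.\ $d=1$, giving $i'\equiv i+s+1\pmod{|C^1|}$.

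Step 2 is the crucial one. In $K_1$ the $q$-th meso-slice of the first $C^1$-slice occupies rows $[(q+1)MN,(q+2)MN-1]$ with value $c^1_{i+q}$; in $K_2$ the $p$-th meso-slice occupies $[(p+1)MN+s,(p+2)MN+s-1]$ with value $c^1_{i'+p}$. For each $q\geq 1$, the shift makes $K_1$'s $c^1_{i+q}$-meso-slice horizontally adjacent to $K_2$'s $c^1_{i'+q-1}$-meso on a strip of length $s$ and to $K_2$'s $c^1_{i'+q}$-meso on the remaining $MN-s$ rows. Substituting $i'=i+s+1$ and letting $a$ range over $C^1$, this yields two families of uniform edges
\[
(c^1_a,c^1_{a+s})\in\vec{E}\quad\text{and}\quad(c^1_a,c^1_{a+s+1})\in\vec{E}.
\]
If $s\geq 1$, then because $|C^1|\geq 3$ by Condition $C$(i), the values $s$ and $s+1$ cannot both be congruent to $1$ modulo $|C^1|$, so at least one of them belongs to $\{0,2,\ldots,|C^1|-1\}$ and produces a uniform shortcut — contradicting Condition $C$(iii). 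Hence $s=0$, the columns are aligned, and $i'=i+1$.

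With alignment in hand, the $C^2$-slices in $K_1$ and $K_2$ occupy the exact same rows, and the same uniformity argument applied in $C^2$ gives $(c^2_a,c^2_{a+(j'-j)})\in\vec{E}$ for every $a\in C^2$; by no uniform shortcut in $C^2$, $j'-j\equiv 1\pmod{|C^2|}$. Therefore $(i',j')=(i+1,j+1)$, establishing synchronization. The main obstacle is the bookkeeping in step 2: one must check that as $q$ runs through $\{0,\ldots,|C^1|-1\}$ the index $a=i+q$ really sweeps out all of $C^1$ (which it does, since each $C^1$-slice is a full cycle), and verify that the two shortcut sizes $s$ and $s+1$ cannot simultaneously be $1$ — the single place where Condition $C$(i) is used.
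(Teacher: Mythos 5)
Your proof is correct and follows the same strategy as the paper's: invoke Proposition~\ref{propapproxaligned} to bound the shift, then exploit the absence of uniform shortcuts in $C^1$ (via the pair of $C^1$-slices in a macro-slice) to force the shift to zero, and finally repeat the argument on the $C^2$-slice to get synchronization. Your derivation via the two consecutive shortcut lengths $s$ and $s+1$, which cannot both be $\equiv 1 \pmod{|C^1|}$ once $|C^1|\geq 3$, makes explicit the step the paper treats more loosely ("being different, at least one of them is not $c^1_{i+1}$"), though your preliminary Step 1 using the border meso-slices is not really needed: the consecutive-integers argument in Step 2 already rules out $s\neq 0$ without knowing $i'-i$ in advance.
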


\begin{proof}
	\cref{propapproxaligned} states that two adjacent columns $K_1$ and $K_2$ are always, in some sense, approximately aligned up to a vertical translation of size at most $2 |C^1|-1$. If the two columns are slightly shifted still, then any meso-slice of the $C^1$ slices of $K_1$ (such a meso-slice consists only of the repetition of some $c^1_i$) is horizontally followed by two different meso-slices in $K_2$. Being different, at least one of them is not $c^1_{i+1}$ but some $c^1_{i+k}, k \in \{2, ..., |C^1|\}$. This is true with the same $k$ for all values of $i$, notably because an ordered macro-slice contains two successive $C^1$ slices, so all meso-slices representing $c^1_i$ are repeated twice -- there is no problem with the meso-slices at the extremities. We obtain something that contradicts our assumption that $C^1$ has no uniform shortcut. Hence there is no vertical shift at all between two consecutive columns. Thus our construction ensures that two adjacent columns are always aligned.
	
	It is easy to see that a meso-slice made only of $c^1_i$ in column $K_1$ is horizontally followed, because the columns are aligned, by a meso-slice made only of $c^1_{i+k}$ in column $K_2$, for some $k \in \{0, \dots, |C^1|-1\}$. This $k$ is once again independent of the $i$ because inside a macro-slice, meso-slices respect the order of cycle $C^1$. But because $C^1$ has no uniform shortcut, we must have $k=1$. The reasoning is the same for the $C^2$ slice, and we use the fact that $C^2$ has no shortcut either. Hence our columns are synchronized.
\end{proof}

With these properties, we have ensured that our structure is rigid: our ordered macro-slices are aligned and synchronized. The last fact to check is the transmission of information, represented by the following proposition:

\begin{proposition}
	\label{proptransmission}
	In this construction, an $(i,j)$-ordered $(k,l)$-coding macro-slice is horizontally followed by an $(i+1,j+1)$-ordered $(k,l)$-coding macro-slice, except in two situations:
	\begin{itemize}
		\item if $c^1_i = c^2_j$, we can have a $(k,l)$-coding macro-slice followed by a $(k^\prime,l)$-coding macro-slice;
		
		\item if $c^1_i \neq c^2_j$ but $c^1_{i-1} = c^2_{j-1}$, we can have a $(k,l)$-coding macro-slice followed by a $(k,l^\prime)$-coding macro-slice.
	\end{itemize}
\end{proposition}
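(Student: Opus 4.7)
The plan is to compare the code meso-slices of $K_1$ (an $(i,j)$-ordered $(k,l)$-coding macro-slice) and $K_2$ (an $(i+1,j+1)$-ordered $(k',l')$-coding macro-slice by Proposition~\ref{propsync}) micro-slice by micro-slice. Write $(i,j) = (i_0 + p, j_0 + p)$ for a good pair $(i_0,j_0)$ with disagreement range $\{0,\dots,L\}$, meaning $c^1_{i_0+r} \neq c^2_{j_0+r}$ if and only if $r \bmod M \in \{0,\dots,L\}$. Then the $q$-th micro-slice of $K_1$ (resp.\ $K_2$) is an actual coding micro-slice or a buffer according to whether $p+q \bmod M$ (resp.\ $p+q+1 \bmod M$) lies in $\{0,\dots,L\}$.

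The key observation is that at each position $q$ where both micro-slices are coding (both in disagreement), horizontal compatibility forces their distinguished $c^2$-slots — the slot $t_1 \in \{k,l\}$ of $K_1$ and the slot $t_2 \in \{k',l'\}$ of $K_2$ — to coincide: otherwise the edges $(c^2_{j_0+p+q}, c^1_{i_0+p+q+1})$ and $(c^1_{i_0+p+q}, c^2_{j_0+p+q+1})$ would both be required in $\mathcal{G}(H)$, precisely a cross-bridge between $C^1$ and $C^2$ at indices $(i_0+p+q, j_0+p+q)$, which is forbidden by condition~$C$(iv). At positions where at least one micro-slice is a buffer, all horizontal edges required are cycle edges of $C^1$ or $C^2$, already present by construction, so no constraint is imposed.

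Combined with the vertical rules of a code meso-slice, the coding micro-slices of $K_1$ form at most two blocks: a \emph{top block} starting at position $0$ (present iff $p \in \{0,\dots,L\}$), whose micro-slices all encode $k$, and a \emph{later block} starting after a run of buffers (present iff $p \in \{1,\dots,M-1\}$), whose micro-slices all encode $l$. The same holds for $K_2$, with the blocks shifted one position upwards. In the generic case both top blocks overlap on at least one disagreement position (forcing $k=k'$) and both later blocks overlap on at least one disagreement position (forcing $l=l'$). The two exceptions correspond exactly to the cases where one of $K_1$'s blocks is empty: if $c^1_i = c^2_j$ (i.e.\ $p \in \{L+1,\dots,M-1\}$), $K_1$'s top block is empty and $k$ is merely a label not realized in the meso-slice, so $k'$ is unconstrained (exception~1); if $c^1_i \neq c^2_j$ but $c^1_{i-1} = c^2_{j-1}$ (i.e.\ $p=0$), $K_1$'s later block is empty and $l$ is unconstrained (exception~2). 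The main obstacle is tracking the cyclic overlap of coding blocks between $K_1$ and $K_2$, in particular verifying that in each generic case the top and later blocks do overlap at a disagreement position; this follows directly from the shift-by-one relation between $K_1$ and $K_2$.
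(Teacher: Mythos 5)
Your central argument — that two horizontally adjacent non-buffer coding micro-slices must encode the same tile, else a cross-bridge would be required in $\mathcal{G}(H)$ — is exactly the paper's argument, which it states and then closes with ``this is enough to prove the proposition.'' You go further by making the block-tracking explicit, which is where a gap appears.

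Your claim that ``in the generic case both top blocks overlap on at least one disagreement position (forcing $k=k'$)'' fails when $p=L$. There, $K_1$'s top block is the single position $q=0$ (at disagreement position $L$, coding $k$), but $K_2$'s micro-slice at $q=0$ sits at disagreement position $L+1$, which is an agreement position: it is a buffer. So $K_2$'s top block is empty and the two top blocks do not overlap at all. This is not one of your two stated exceptions (you list $K_1$'s blocks being empty, and here both of $K_1$'s blocks are non-empty), so your argument as written does not establish $k=k'$ in this case. The resolution, which your proof should state, is that $c^1_{i+1}=c^2_{j+1}$ here, hence by the paper's earlier remark $K_2$'s main-coded tile is irrelevant — the label $k'$ is a degenerate one and can be taken equal to $k$ without any constraint being violated. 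The paper's terse argument (``either one of them is a buffer, or they both are $p$-coding for the same $p$'') silently handles this case correctly, precisely because it checks the cross-bridge condition position by position rather than block by block. Your block-by-block accounting is a reasonable way to present the same idea, but you need to note that the top block of $K_2$ can be empty even when neither exception applies to $K_1$, and that this is harmless because the missing main-coded tile imposes no constraint.
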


\begin{proof}
	The exceptions are due to an earlier remark: if $c^1_i = c^2_j$, then the code meso-slice contains no main-coded tile, so its value of $k$ is actually irrelevant. Similarly, if $c^1_i \neq c^2_j$ but $c^1_{i-1} = c^2_{j-1}$, the code meso-slice contains no side-coded tile, so its value of $l$ is irrelevant.
	
	For the rest of the proof, it is already clear from \cref{propsync} that an $(i,j)$-ordered macro-slice is horizontally followed by an $(i+1,j+1)$-ordered macro-slice; the only part left to study is the coding part.
	
	Now, consider two horizontally adjacent coding micro-slices. By synchronicity, one is an $(i,j)$ micro-slice, and the other an $(i+1,j+1)$ micro-slice. Since $\mathcal{G}(H)$ verifies condition C, and particularly has no cross-bridge, we cannot have both an edge $(c^1_i,c^2_{j+1}) \in \vec{E}$ and an edge $(c^2_j,c^1_{i+1}) \in \vec{E}$, except if one of the two micro-slices is a buffer. Therefore, either one of them is a buffer, or they both are $p$-coding for the same $p$.
	
	This is enough to prove the proposition.
\end{proof}

\begin{figure}[t]
	\begin{subfigure}[t]{0.4\textwidth}
		\centering
		\scalebox{1.5}{
			\begin{tikzpicture}
			\node[draw, circle] (b) {$b$};
			\node[draw, circle] (a) [below left = 0.5cm and 0.25cm of b] {$a$};
			\node[draw, circle] (c) [below right = 0.5cm and 0.25cm of b] {$c$};
			
			\draw[->, >=latex] (a) to[bend left=20] (b);
			\draw[->, >=latex] (b) to[bend left=20] (c);
			\draw[->, >=latex] (c) to[bend left=20] (a);
			
			\draw[->, >=latex] (c) to[bend left=20] (b);
			
			\draw[->, >=latex] (c) to [out=330,in=300,looseness=12] (c);
			\end{tikzpicture}
		}
	\end{subfigure}
	\hfill
	\begin{subfigure}[t]{0.4\textwidth}
		\centering
		\scalebox{0.6}{
			\begin{tikzpicture}
			\clip[decorate,decoration={random steps,segment length=5pt,amplitude=2pt}] (-0.1, 0.1) rectangle (6.1,-9.1);
			
			\begin{scope}[xshift=-1cm]
			\draw (0,0) rectangle (3,-3);
			\draw (3,0) rectangle (6,-3);
			\draw (6,0) rectangle (9,-3);
			
			\draw[fill=yellow, rounded corners, inner sep=0.3mm] (0,-2) rectangle (2,-3);
			\draw[fill=yellow, rounded corners, inner sep=0.3mm] (3,0) rectangle (5,-1);
			\draw[fill=yellow, rounded corners, inner sep=0.3mm] (6,-1) rectangle (8,-2);
			
			\fill[pattern=north west lines, pattern color=black!30] (2,0) rectangle (3,-3);
			\fill[pattern=north west lines, pattern color=black!30] (5,0) rectangle (6,-3);
			\fill[pattern=north west lines, pattern color=black!30] (8,0) rectangle (9,-3);
			
			\draw (0.5,-0.5) node {$a$};
			\draw (0.5,-1.5) node {$a$};
			\draw (0.5,-2.5) node {$c$};
			\draw (1.5,-0.5) node {$b$};
			\draw (1.5,-1.5) node {$b$};
			\draw (1.5,-2.5) node {$c$};
			\draw (2.5,-0.5) node {$c$};
			\draw (2.5,-1.5) node {$c$};
			\draw (2.5,-2.5) node {$c$};
			
			\draw (3.5,-0.5) node {$c$};
			\draw (3.5,-1.5) node {$a$};
			\draw (3.5,-2.5) node {$a$};
			\draw (4.5,-0.5) node {$c$};
			\draw (4.5,-1.5) node {$b$};
			\draw (4.5,-2.5) node {$b$};
			\draw (5.5,-0.5) node {$c$};
			\draw (5.5,-1.5) node {$c$};
			\draw (5.5,-2.5) node {$c$};
			
			\draw (6.5,-0.5) node {$a$};
			\draw (6.5,-1.5) node {$c$};
			\draw (6.5,-2.5) node {$a$};
			\draw (7.5,-0.5) node {$b$};
			\draw (7.5,-1.5) node {$c$};
			\draw (7.5,-2.5) node {$b$};
			\draw (8.5,-0.5) node {$c$};
			\draw (8.5,-1.5) node {$c$};
			\draw (8.5,-2.5) node {$c$};
			\end{scope}

			\begin{scope}[xshift=-2cm,yshift=-3cm]
			\draw (0,0) rectangle (3,-3);
			\draw (3,0) rectangle (6,-3);
			\draw (6,0) rectangle (9,-3);
			
			\draw[fill=yellow, rounded corners, inner sep=0.3mm] (0,-2) rectangle (2,-3);
			\draw[fill=yellow, rounded corners, inner sep=0.3mm] (3,0) rectangle (5,-1);
			\draw[fill=yellow, rounded corners, inner sep=0.3mm] (6,-1) rectangle (8,-2);
			
			\fill[pattern=north west lines, pattern color=black!30] (2,0) rectangle (3,-3);
			\fill[pattern=north west lines, pattern color=black!30] (5,0) rectangle (6,-3);
			\fill[pattern=north west lines, pattern color=black!30] (8,0) rectangle (9,-3);
			
			\draw (0.5,-0.5) node {$a$};
			\draw (0.5,-1.5) node {$a$};
			\draw (0.5,-2.5) node {$c$};
			\draw (1.5,-0.5) node {$b$};
			\draw (1.5,-1.5) node {$b$};
			\draw (1.5,-2.5) node {$c$};
			\draw (2.5,-0.5) node {$c$};
			\draw (2.5,-1.5) node {$c$};
			\draw (2.5,-2.5) node {$c$};
			
			\draw (3.5,-0.5) node {$c$};
			\draw (3.5,-1.5) node {$a$};
			\draw (3.5,-2.5) node {$a$};
			\draw (4.5,-0.5) node {$c$};
			\draw (4.5,-1.5) node {$b$};
			\draw (4.5,-2.5) node {$b$};
			\draw (5.5,-0.5) node {$c$};
			\draw (5.5,-1.5) node {$c$};
			\draw (5.5,-2.5) node {$c$};
			
			\draw (6.5,-0.5) node {$a$};
			\draw (6.5,-1.5) node {$c$};
			\draw (6.5,-2.5) node {$a$};
			\draw (7.5,-0.5) node {$b$};
			\draw (7.5,-1.5) node {$c$};
			\draw (7.5,-2.5) node {$b$};
			\draw (8.5,-0.5) node {$c$};
			\draw (8.5,-1.5) node {$c$};
			\draw (8.5,-2.5) node {$c$};
			\end{scope}

			\begin{scope}[xshift=-3cm,yshift=-6cm]
			\draw (0,0) rectangle (3,-3);
			\draw (3,0) rectangle (6,-3);
			\draw (6,0) rectangle (9,-3);
			
			\draw[fill=yellow, rounded corners, inner sep=0.3mm] (0,-2) rectangle (2,-3);
			\draw[fill=yellow, rounded corners, inner sep=0.3mm] (3,0) rectangle (5,-1);
			\draw[fill=yellow, rounded corners, inner sep=0.3mm] (6,-1) rectangle (8,-2);
			
			\fill[pattern=north west lines, pattern color=black!30] (2,0) rectangle (3,-3);
			\fill[pattern=north west lines, pattern color=black!30] (5,0) rectangle (6,-3);
			\fill[pattern=north west lines, pattern color=black!30] (8,0) rectangle (9,-3);
			
			\draw (0.5,-0.5) node {$a$};
			\draw (0.5,-1.5) node {$a$};
			\draw (0.5,-2.5) node {$c$};
			\draw (1.5,-0.5) node {$b$};
			\draw (1.5,-1.5) node {$b$};
			\draw (1.5,-2.5) node {$c$};
			\draw (2.5,-0.5) node {$c$};
			\draw (2.5,-1.5) node {$c$};
			\draw (2.5,-2.5) node {$c$};
			
			\draw (3.5,-0.5) node {$c$};
			\draw (3.5,-1.5) node {$a$};
			\draw (3.5,-2.5) node {$a$};
			\draw (4.5,-0.5) node {$c$};
			\draw (4.5,-1.5) node {$b$};
			\draw (4.5,-2.5) node {$b$};
			\draw (5.5,-0.5) node {$c$};
			\draw (5.5,-1.5) node {$c$};
			\draw (5.5,-2.5) node {$c$};
			
			\draw (6.5,-0.5) node {$a$};
			\draw (6.5,-1.5) node {$c$};
			\draw (6.5,-2.5) node {$a$};
			\draw (7.5,-0.5) node {$b$};
			\draw (7.5,-1.5) node {$c$};
			\draw (7.5,-2.5) node {$b$};
			\draw (8.5,-0.5) node {$c$};
			\draw (8.5,-1.5) node {$c$};
			\draw (8.5,-2.5) node {$c$};

			\draw (9,0) rectangle (12,-3);
			
			\draw[fill=yellow, rounded corners, inner sep=0.3mm,thick] (9,-1) rectangle (11,-2);
			
			\fill[pattern=north west lines, pattern color=black!30] (11,0) rectangle (12,-3);
			
			\draw (9.5,-0.5) node {$a$};
			\draw (9.5,-1.5) node {$c$};
			\draw (9.5,-2.5) node {$a$};
			\draw (10.5,-0.5) node {$b$};
			\draw (10.5,-1.5) node {$c$};
			\draw (10.5,-2.5) node {$b$};
			\draw (11.5,-0.5) node {$c$};
			\draw (11.5,-1.5) node {$c$};
			\draw (11.5,-2.5) node {$c$};
			\end{scope}
			
			\draw[thick, rounded corners, inner sep=0.3mm] (3,0) rectangle (4,-9);
			
			\end{tikzpicture}
		}
	\end{subfigure}
	
	\caption{A Rauzy graph and several associated code meso-slices for $|\tau| = 3$. Here are horizontally successively encoded $\tau_3, \tau_1, \tau_2$ and $\tau_2$, the number being indicated by the location of the line of $c$'s. One can check that $\tau_1$ can be located left to $\tau_2$ in the encoding, by using vertical constraints only, as depicted by the bold rectangle with rounded corners.}
	\label{codingblock}
\end{figure}

In the end, we proved that if we were able to find two cycles $C^1$ and $C^2$ complying with condition $C$, they would be enough to build the construction we desire: the root of a full shift on $N$ elements.
Indeed, take $Z$ the clopen made of all the configurations with, at position $(0,0)$, the bottom of an $(i,j)$-ordered macro-slice with $c^1_{i-1} = c^2_{j-1}$ but $c^1_{i} \neq c^2_{j}$. Suppose that macro-slice is $(k,l)$-coding (with an irrelevant $l$). Map it, along with the $M-1$ that follow horizontally (hence, map a $M \times KMN$ rectangle) on $y_{(0,0)} = \tau_k$. By mapping the whole configuration similarly, $Z$ is homeomorphic to $Y$ with the required properties: $X_{H,V_Y}$ is therefore a $(M,KMN)$th root of $\tau^{\Z^2}$.

Then, to encode only the configurations that are valid in $W$ a Wang shift, we forbid the following additional vertical patterns:
\begin{itemize}
	\item the code meso-slices that would contain both $\tau_k$ as its main-coded tile and $\tau_l$ as its side-coded tile, if $\tau_k$ cannot be horizontally followed by $\tau_l$ in $W$;
	\item and the vertical succession of two ordered macro-slices that would contain code meso-slices with two main-coded tiles that cannot be vertically successive in $W$.
\end{itemize}
With this, we proved \cref{propcondC}: the set of vertical conditions obtained, that define a one-dimensional SFT $V_W$, is such that $X_{H,V_W}$ is a $(M,KMN)$th root of $W$.

\subsection{Summary of the general construction for one strongly connected component}

We suppose that $H \subset \A^\Z$ is a one-dimensional nearest-neighbor SFT such that its Rauzy graph does not verify condition $D$ and is made of only one SCC.

Note that $\mathcal{G}(H)$, since it does not verify condition $D$, contains at least one loopless vertex, and one unidirectional edge.

\begin{table}[b]
	\caption{Table of the main cases, each of them illustrated with an example (the $C^1$ on which we perform the generic construction is the main cycle indicated; the $C^2$ is in red).}
	\begin{center}
		\resizebox{\columnwidth}{!}{
			\begin{tabular}{|c|c|c||c|c||c|c|c|}
				\hline
				\multicolumn{3}{|c||}{Loops} & \multicolumn{5}{c|}{No loop}\\
				\hline
				&  &  & \multicolumn{2}{c||}{Bidirectional edges} & \multicolumn{3}{c|}{No bidirectional edge}\\
				\cline{4-8}
				
				\begin{tikzpicture}[scale=0.4]
				\def \radius {1.5cm};
				
				\node[draw, circle] (a) at (0:\radius) {};
				\node[draw, circle] (b) at (72:\radius) {};
				\node[draw, circle] (c) at (144:\radius) {};
				\node[draw, circle,scale=0.7] (d) at (216:\radius) {$v$};
				\node[draw, circle,scale=0.7] (e) at (288:\radius) {$w$};
				
				\draw (0,0) node {$C^1$};
				
				\draw[->, >=latex] (a) to[bend right=15] (b);
				\draw[->, >=latex] (b) to[bend right=15] (c);
				\draw[->, >=latex] (c) to[bend right=15] (d);
				\draw[->, >=latex] (d) to[bend right=15] (e);
				\draw[->, >=latex] (e) to[bend right=15] (a);
				
				\draw[->, >=latex] (b) to[bend right=15] (a);
				\draw[->, >=latex] (c) to[bend right=15] (b);
				
				\draw[->, >=latex,color=red] (e) to[out=308,in=268,looseness=6] (e);
				\end{tikzpicture}
				
				&
				
				\begin{tikzpicture}[scale=0.4]
				\def \radius {1.5cm};
				
				\node[draw, circle,scale=0.7] (a) at (0:\radius) {$v$};
				\node[draw, circle,scale=0.7] (b) at (72:\radius) {$w$};
				\node[draw, circle] (c) at (144:\radius) {};
				\node[draw, circle] (d) at (216:\radius) {};
				\node[draw, circle,scale=0.7] (e) at (288:\radius) {$u$};
				
				\draw (0,0) node {$C^1$};
				
				\draw[->, >=latex,color=red] (a) to[bend right=15] (b);
				\draw[->, >=latex] (b) to[bend right=15] (c);
				\draw[->, >=latex] (c) to[bend right=15] (d);
				\draw[->, >=latex] (d) to[bend right=15] (e);
				\draw[->, >=latex] (e) to[bend right=15] (a);
				
				\draw[->, >=latex,color=red] (b) to[bend right=15] (a);
				\draw[->, >=latex] (c) to[bend right=15] (b);
				
				\draw[->, >=latex] (b) to[out=92,in=52,looseness=6] (b);
				\end{tikzpicture}
				
				&
				
				\begin{tikzpicture}[scale=0.4]
				\def \radius {1.5cm};
				
				\node[draw, circle,scale=0.7] (a) at (0:\radius) {$v$};
				\node[draw, circle,scale=0.7] (b) at (72:\radius) {$a$};
				\node[draw, circle] (c) at (144:\radius) {};
				\node[draw, circle] (d) at (216:\radius) {};
				\node[draw, circle,scale=0.7] (e) at (288:\radius) {$u$};
				
				\draw (0,0) node {$C^1$};
				
				\draw[->, >=latex] (a) to[bend right=15] (b);
				\draw[->, >=latex] (b) to[bend right=15] (c);
				\draw[->, >=latex] (c) to[bend right=15] (d);
				\draw[->, >=latex] (d) to[bend right=15] (e);
				\draw[->, >=latex] (e) to[bend right=15] (a);
				
				\draw[->, >=latex] (b) to[bend right=15] (a);
				\draw[->, >=latex] (c) to[bend right=15] (b);
				
				\draw[->, >=latex,color=red] (a) to[out=20,in=-20,looseness=6] (a);
				\draw[->, >=latex] (c) to[out=164,in=124,looseness=6] (c);
				\draw[->, >=latex] (d) to[out=236,in=196,looseness=6] (d);
				\draw[->, >=latex] (e) to[out=308,in=268,looseness=6] (e);
				\end{tikzpicture}
				
				&
				
				\begin{tikzpicture}[scale=0.4]
				\def \radius {1.5cm};
				
				\node[draw, circle,scale=0.7] (a) at (0:\radius) {$v$};
				\node[draw, circle,scale=0.7] (b) at (72:\radius) {$w$};
				\node[draw, circle] (c) at (144:\radius) {};
				\node[draw, circle] (d) at (216:\radius) {};
				\node[draw, circle,scale=0.7] (e) at (288:\radius) {$u$};
				
				\draw (0,0) node {$C^1$};
				
				\draw[->, >=latex,color=red] (a) to[bend right=15] (b);
				\draw[->, >=latex] (b) to[bend right=15] (c);
				\draw[->, >=latex] (c) to[bend right=15] (d);
				\draw[->, >=latex] (d) to[bend right=15] (e);
				\draw[->, >=latex] (e) to[bend right=15] (a);
				
				\draw[->, >=latex,color=red] (b) to[bend right=15] (a);
				\draw[->, >=latex] (c) to[bend right=15] (b);
				\end{tikzpicture}
				
				&
				
				\begin{tikzpicture}[scale=0.4,rotate=-90]
				\def \radius {1.5cm};
				
				\node[draw, circle,scale=0.7] (a) at (0:\radius) {$v$};
				\node[draw, circle] (b) at (72:\radius) {};
				\node[draw, circle] (c) at (144:\radius) {};
				\node[draw, circle] (d) at (216:\radius) {};
				\node[draw, circle] (e) at (288:\radius) {};
				
				\node[draw, circle,scale=0.7] (f) at (3.5,0) {$w$};
				
				\draw (0,0) node {$C^1$};
				
				\draw[->, >=latex] (a) to[bend right=15] (b);
				\draw[->, >=latex] (b) to[bend right=15] (c);
				\draw[->, >=latex] (c) to[bend right=15] (d);
				\draw[->, >=latex] (d) to[bend right=15] (e);
				\draw[->, >=latex] (e) to[bend right=15] (a);
				
				\draw[->, >=latex,color=red] (a) to[bend right=15] (f);
				\draw[->, >=latex,color=red] (f) to[bend right=15] (a);
				\end{tikzpicture}
				
				&
				
				\begin{tikzpicture}[scale=0.4,rotate=-90]
				\def \radius {1.5cm};
				
				\node[draw, circle] (a) at (0:\radius) {};
				\node[draw, circle] (b) at (72:\radius) {};
				\node[draw, circle] (c) at (144:\radius) {};
				\node[draw, circle] (d) at (216:\radius) {};
				\node[draw, circle] (e) at (288:\radius) {};
				
				\node[draw, circle] (f) at (-2.5,0.8) {};
				\node[draw, circle] (g) at (-2.5,-0.8) {};
				
				\draw (0,0) node {$C^1$};
				
				\draw[->, >=latex,color=red] (a) to[bend right=15] (b);
				\draw[->, >=latex,color=red] (b) to[bend right=15] (c);
				\draw[->, >=latex] (c) to[bend right=15] (d);
				\draw[->, >=latex,color=red] (d) to[bend right=15] (e);
				\draw[->, >=latex,color=red] (e) to[bend right=15] (a);
				
				\draw[->, >=latex,color=red] (c) to[bend right=15] (f);
				\draw[->, >=latex,color=red] (f) to[bend right=15] (g);
				\draw[->, >=latex,color=red] (g) to[bend right=15] (d);
				\end{tikzpicture}
				
				&
				
				\begin{tikzpicture}[scale=0.4,rotate=-90]
				\def \radius {1.5cm};
				
				\node[draw, circle] (a) at (0:\radius) {};
				\node[draw, circle] (b) at (72:\radius) {};
				\node[draw, circle] (c) at (144:\radius) {};
				\node[draw, circle] (d) at (216:\radius) {};
				\node[draw, circle] (e) at (288:\radius) {};
				
				\node[draw, circle] (f) at (-2.5,1) {};
				\node[draw, circle] (g) at (-2.5,-1) {};
				
				\draw (0,0) node {$C^1$};
				
				\draw[->, >=latex,color=red] (a) to[bend right=15] (b);
				\draw[->, >=latex] (b) to[bend right=15] (c);
				\draw[->, >=latex] (c) to[bend right=15] (d);
				\draw[->, >=latex] (d) to[bend right=15] (e);
				\draw[->, >=latex,color=red] (e) to[bend right=15] (a);
				
				\draw[->, >=latex,color=red] (b) to[bend right=15] (f);
				\draw[->, >=latex,color=red] (f) to[bend right=15] (g);
				\draw[->, >=latex,color=red] (g) to[bend right=15] (e);
				
				\draw[->, >=latex] (c) to[bend right=15] (g);
				\end{tikzpicture}
				
				&
				
				\begin{tikzpicture}[scale=0.4]
				\def \radius {1.5cm};
				
				\node[draw, circle] (a) at (0:\radius) {};
				\node[draw, circle] (b) at (72:\radius) {};
				\node[draw, circle] (c) at (144:\radius) {};
				\node[draw, circle] (d) at (216:\radius) {};
				\node[draw, circle] (e) at (288:\radius) {};
				
				\node[draw, circle] (f) at (2.5,-0.8) {};
				\node[draw, circle] (g) at (2.5,0.8) {};
				
				\draw (0,0) node {$C^1$};
				
				\draw[->, >=latex] (a) to[bend right=15] (b);
				\draw[->, >=latex] (b) to[bend right=15] (c);
				\draw[->, >=latex] (c) to[bend right=15] (d);
				\draw[->, >=latex] (d) to[bend right=15] (e);
				\draw[->, >=latex] (e) to[bend right=15] (a);
				
				\draw[->, >=latex,color=red] (a) to[bend right=15] (f);
				\draw[->, >=latex,color=red] (f) to[bend right=15] (g);
				\draw[->, >=latex,color=red] (g) to[bend right=15] (a);
				\end{tikzpicture}
				
				\\
				
				\hline
				
				Case 1.1 & Case 1.2 & Case 1.3 & Case 2.1 & Case 2.2 & Case 3.1 & Case 3.2 & Case 3.3\\
				
				\hline
			\end{tabular}
		}
	\end{center}
	\label{tablecases}
\end{table}

The idea of the proof of \cref{th:root} is to classify the possible graphs into various cases. In each case, one has a standard procedure to find convenient $C^1$ and $C^2$ inside any graph to perform the generic construction from \cref{subsec:core}. Of course, for some specific cases, we will not meet condition $C$ even if $H$ does not verify condition $D$. However, we will punctually adapt the generic construction to these specificities.

The division into cases is presented in a disjunctive fashion, see \cref{tablecases}:

Is there a loop on a vertex?
\begin{itemize}
	\item If YES: Is there a unidirectional edge $(v,w) \in \vec{E}$ so that $v$ is loopless and $w$ has a loop (or the reverse, which is similar)?
	\begin{itemize}
		\item If YES: This is Case 1.1. We can find $C^1$ and $C^2$ that check condition $C$ with the exception of the possible presence of both an attractive and a repulsive vertices. However, \cref{propapproxaligned} is still verified, because by choosing the smallest possible cycle containing such $v$ and $w$, $v$ has in-degree $1$, a property that allows for an easy synchronization.
		
		\item If NO: Do unidirectional edges have loopless vertices?
		\begin{itemize}
			\item If YES: This is Case 1.2. We can find $C^1$ and $C^2$ that check condition $C$, possibly by reducing to a situation encountered in case 2.2.
			\item If NO: This is Case 1.3. This one generates some exceptional graphs with 4 or 5 vertices that do not check condition $C$ and must be treated separately. However, technical considerations prove that our generic construction still works.
		\end{itemize}
	\end{itemize}
	\item If NO: Is there a bidirectional edge?
	\begin{itemize}
		\item If YES: Is there a cycle of size at least 3 that contains a bidirectional edge?
		\begin{itemize}
			\item If YES: This is Case 2.1. We can find $C^1$ and $C^2$ that verify condition $C$ rather easily.
			\item If NO: This is Case 2.2, in which checking condition $C$ is also easy.
		\end{itemize}
		\item If NO: Is there a minimal cycle with a path between two \emph{different} elements of it, say $c^1_0$ and $c^1_k$, that does not belong to the cycle?
		\begin{itemize}
			\item If YES: Can we find such a path of length different from $k$?
			\begin{itemize}
				\item If YES: This is Case 3.1, a rather tedious case, but we can find cycles $C^1$ and $C^2$ that verify condition $C$ nonetheless.
				\item If NO: This is Case 3.2, which relies heavily on the fact that $\mathcal{G}(H)$ is not of state-split cycle type to find cycles that verify condition $C$.
			\end{itemize}
			\item If NO: This is Case 3.3, an easy case to find cycles that verify condition $C$.
		\end{itemize}
	\end{itemize}
\end{itemize}

In the subsections that follow, we define two cycles $C^1$ and $C^2$ trying to fit condition C as much as possible, and proving when it fails that the propositions from \cref{subsec:core} still hold nonetheless. We name $C^1$ vertices $c^1_i$ and $C^2$ vertices $c^2_j$, with $i \in \{0,\dots,|C^1|-1\}$ and $j \in \{0,\dots,|C^2|-1\}$.

\subsection{Case 1}
\label{subsec:case1}

We suppose that $\mathcal{G}(H)$ contains a loop.

\textbf{Case 1.1:} we can find a unidirectional edge so that the first vertex is loopless and the second has a loop (or the opposite, for which the construction is similar and omitted).

\begin{center}
	\begin{tikzpicture}[scale=0.4]
		\def \radius {1.5cm};
		
		\node[draw, circle] (a) at (0:\radius) {};
		\node[draw, circle] (b) at (72:\radius) {};
		\node[draw, circle] (c) at (144:\radius) {};
		\node[draw, circle,scale=0.7] (d) at (216:\radius) {$v$};
		\node[draw, circle,scale=0.7] (e) at (288:\radius) {$w$};
		
		\draw (0,0) node {$C^1$};
		
		\draw[->, >=latex] (a) to[bend right=15] (b);
		\draw[->, >=latex] (b) to[bend right=15] (c);
		\draw[->, >=latex] (c) to[bend right=15] (d);
		\draw[->, >=latex] (d) to[bend right=15] (e);
		\draw[->, >=latex] (e) to[bend right=15] (a);
		
		\draw[->, >=latex] (b) to[bend right=15] (a);
		\draw[->, >=latex] (c) to[bend right=15] (b);
		
		\draw[->, >=latex,color=red] (e) to[out=308,in=268,looseness=6] (e);
	\end{tikzpicture}
\end{center}

Take the shortest possible cycle containing such an edge, which exists since $\mathcal{G}(H)$ is strongly connected. Call $v$ the loopless vertex and $w$ the vertex with a loop. Naming that cycle $C^1$ with $c^1_0 = w$, and setting $C^2 = \{w\}$, we have to check that they fulfill the conditions of \cref{subsec:core}.

$(v,w)$ is unidirectional and $v$ is loopless. Note that no edge can go from $w$ to any vertex that is not $w$ or $c^1_1$, else we could find a shorter cycle with the same characteristics. Similarly, $v$ has in-degree $1$. Hence:
\begin{itemize}
	\item $|C^1| \geq 3$ since $(w,v) \notin \vec{E}$;
	
	\item $C^1$ and $C^2$ contain a good pair, that is $(c^1_1,w)$.
	
	\item $C^2$ has no uniform shortcut because it is made of one single vertex. $C^1$ has no uniform shortcut because of what precedes about $w$ and because $v$ is loopless;
	
	\item If there was a cross-bridge between $C^1$ and $C^2$, it would mean there are two edges $(c^1_i,w)$ and $(w,c^1_{i+1}) \in \vec{E}$ with $c^1_i \neq w \neq c^1_{i+1}$, which is also impossible because of what precedes about $w$;
	
	\item Here, there can actually be attractive and repulsive vertices for $C^1$, which endangers \cref{propapproxaligned}. However, the only vertex that has an edge going to $v$ is the previous one in the cycle $C^1$, call it $u := c^1_{-2}$. As such, in any column, the $v$ meso-slice must be next to the $u$ meso-slice of the previous column since no other block of $u$ of size $M N$ can be found in said previous column. Hence two consecutive columns are always aligned and we can make the generic construction work with no restriction on attractive and repulsive vertices: \cref{propapproxaligned} still holds, albeit for reasons different from the ones in \cref{subsec:core}.
\end{itemize}

\textbf{Case 1.2:} all unidirectional edges have loopless vertices.

\begin{center}
	\begin{tikzpicture}[scale=0.4]
		\def \radius {1.5cm};
		
		\node[draw, circle,scale=0.7] (a) at (0:\radius) {$v$};
		\node[draw, circle,scale=0.7] (b) at (72:\radius) {$w$};
		\node[draw, circle] (c) at (144:\radius) {};
		\node[draw, circle] (d) at (216:\radius) {};
		\node[draw, circle,scale=0.7] (e) at (288:\radius) {$u$};
		
		\draw (0,0) node {$C^1$};
		
		\draw[->, >=latex,color=red] (a) to[bend right=15] (b);
		\draw[->, >=latex] (b) to[bend right=15] (c);
		\draw[->, >=latex] (c) to[bend right=15] (d);
		\draw[->, >=latex] (d) to[bend right=15] (e);
		\draw[->, >=latex] (e) to[bend right=15] (a);
		
		\draw[->, >=latex,color=red] (b) to[bend right=15] (a);
		\draw[->, >=latex] (c) to[bend right=15] (b);
		
		\draw[->, >=latex] (b) to[out=92,in=52,looseness=6] (b);
	\end{tikzpicture}
\end{center}

Since $\mathcal{G}(H)$ is in Case 1 and Subcase 1.2, it contains loops and its unidirectional edges have loopless vertices. Hence it contains bidirectional edges. Moreover, it cannot contain only bidirectional edges, else it would verify condition $D$. Hence $\mathcal{G}(H)$ contains both unidirectional and bidirectional edges, and by strong connectivity, we can find a (possibly self-intersecting) cycle containing both. Therefore we can find a unidirectional edge followed by a bidirectional edge.

We name $u, v, w$ three successive vertices in the graph so that $(u,v), (v,w)$ and $(w,v) \in \vec{E}$, and $(v,u) \notin \vec{E}$ (so $u$ and $v$ have no loop). Two situations are possible: either there is a path from $w$ to $u$ that does not go through $v$, and we obtain a cycle containing both a unidirectional and a bidirectional edge; or there is not. In that second case, we consider a path from $v$ to $u$:
\begin{itemize}
	\item either it contains no bidirectional edge, and the resulting graph is made of one cycle with unidirectional edges and the bidirectional edge $(v,w)$: this is treated just as Case 2.2 -- with the extra of $w$ having a loop, but this does not change the reasoning.
	\item or the path from $v$ to $u$ contains a bidirectional edge; then concatenated to $(u,v)$ it forms a cycle with both a unidirectional and a bidirectional edge.
\end{itemize}
Iteratively reducing the cycle obtained to the shortest one possible, we either end up in a situation that can be reduced to Case 2.2, or to a cycle similar to the figure above: with a unidirectional edge, followed by a bidirectional edge, and containing no shorter cycle that would fit.

Naming that cycle $C^1$ with $c^1_0 = v$, and defining $C^2 = \{v, w\}$, we have to check that they fulfill the conditions of the generic construction. Note that $w$ cannot lead to any vertex except $c^1_2$, $v$, and possibly $w$; else we could find a cycle shorter than $C^1$ that has the same properties. Note, also, that $v \neq c^1_2$, else we would reduce to Case 2.2. Hence:
\begin{itemize}
	\item $|C^1| \geq 3$;
	
	\item $C^1$ and $C^2$ have a good pair: $(c^1_2,v)$ is one.
	
	\item $C^2$ has no uniform shortcut because it is made of only two vertices and $v$ has no loop. $C^1$ has no uniform shortcut of length $0$ because $u$ and $v$ are loopless, and no uniform shortcut of length $-1$ because $(v,u) \notin \vec{E}$. With what precedes about $w$, there is no uniform shortcut at all;
	
	\item If there was a cross-bridge, we could have two cases:
	\begin{itemize}
		\item First is $(c^1_i,v)$ and $(w,c^1_{i+1}) \in \vec{E}$ with $c^1_i \neq w$ and $c^1_{i+1} \neq v$. Since $v$ has no loop, we deduce $c^1_i \neq v$, hence $c^1_{i+1} \neq w$. Also consider that $c^1_{i+1} = c^1_2$ would imply $c^1_i = w$, which is impossible. Therefore, with what we said on $w$, that kind of cross-bridge cannot happen.
		
		\item Second is $(c^1_i,w)$ and $(v,c^1_{i+1}) \in \vec{E}$ with $c^1_i \neq v$ and $c^1_{i+1} \neq w$. Since $v$ has no loop and $(v,u) \notin \vec{E}$, we also deduce $v \neq c^1_{i+1}$ and $u \neq c^1_{i+1}$. Then we can define a shorter cycle that is ${C^1}^\prime = ( v, c^1_{i+1}, c^1_{i+2}, ..., u )$. Since $(v,u) \notin \vec{E}$, ${C^1}^\prime$ has length at least $3$, contains at least one unidirectional edge, and is strictly shorter than $C^1$. Since $C^1$ is a minimal cycle having these properties and containing a bidirectional edge, ${C^1}^\prime$ must contain no bidirectional edge. Then ${C^1}^\prime$ is a cycle of unidirectional edges such that a bidirectional edge has one vertex in common with it (that is, $\{v, w\}$). This is an iterative reduction that should have already been performed to build $C^1$ and $C^2$, therefore it cannot happen here.
	\end{itemize}
	
	\item If there is an attractive vertex for $C^1$ located in $C^2$, then it is in particular in $C^1$ since $C^2 \subset C^1$. Since they have no loop, $u$ and $v$ can't be attractive or repulsive. If any other vertex than $w$ or the following vertex, call it $x$, was attractive, then it would allow for a direct edge from $w$ to that vertex, and so a fitting cycle strictly shorter than $C^1$ would exist, which is impossible. But if $w$ (resp. $x$) was attractive, in particular $(u,w) \in \vec{E}$ (resp. $(u,x) \in \vec{E}$). Since $w$ (resp. $x$) would have a loop because it also attracts itself, in this Case 1.2 we couldn't have a unidirectional edge $(u,w)$ (resp. $(u,x)$): necessarily $(w,u) \in \vec{E}$ (resp. $(x,u) \in \vec{E}$). The only possibility not to  cause a contradiction with the minimality of $C^1$ is that $C^1$ is already of length $3$ (resp. $4$).
	
	The length $3$ case is treated in \cref{subsubsec:length3}. The length $4$ case with $(u, v, w, x)$ and $x$ attractive is actually impossible, because $(u, v, x)$ reduces to a length $3$ case with the correct properties, but $C^1$ was supposed to be minimal.
\end{itemize}

\textbf{Case 1.3:} all unidirectional edges have vertices with loops.

\begin{center}
	\begin{tikzpicture}[scale=0.4]
		\def \radius {1.5cm};
		
		\node[draw, circle,scale=0.7] (a) at (0:\radius) {$v$};
		\node[draw, circle,scale=0.7] (b) at (72:\radius) {$a$};
		\node[draw, circle] (c) at (144:\radius) {};
		\node[draw, circle] (d) at (216:\radius) {};
		\node[draw, circle,scale=0.7] (e) at (288:\radius) {$u$};
		
		\draw (0,0) node {$C^1$};
		
		\draw[->, >=latex] (a) to[bend right=15] (b);
		\draw[->, >=latex] (b) to[bend right=15] (c);
		\draw[->, >=latex] (c) to[bend right=15] (d);
		\draw[->, >=latex] (d) to[bend right=15] (e);
		\draw[->, >=latex] (e) to[bend right=15] (a);
		
		\draw[->, >=latex] (b) to[bend right=15] (a);
		\draw[->, >=latex] (c) to[bend right=15] (b);
		
		\draw[->, >=latex,color=red] (a) to[out=20,in=-20,looseness=6] (a);
		\draw[->, >=latex] (c) to[out=164,in=124,looseness=6] (c);
		\draw[->, >=latex] (d) to[out=236,in=196,looseness=6] (d);
		\draw[->, >=latex] (e) to[out=308,in=268,looseness=6] (e);
	\end{tikzpicture}
\end{center}

Since $\mathcal{G}(H)$ does not verify condition $D$, we can find a cycle with at least one loopless vertex and one unidirectional edge, that, in this specific case, may go through the same vertices twice since it is defined, strictly speaking, as the concatenation of one path from the unidirectional edge to the loopless vertex, and one path back, the two of them being able to intersect.

Define $C^1$ as the smallest cycle built that way. Call $u$ and $v$ the two successive vertices of the unidirectional edge, that is $(u,v) \in \vec{E}$ but $(v,u) \notin \vec{E}$. Note that $u$ and $v$ have a loop since we are in Case 1.3. Moreover, call $a$ the loopless vertex that was also used to build $C^1$. Finally, set $a = c^1_0, u = c^1_{i_0}, v = c^1_{i_0+1}$.

Setting $C^2 = \{v\}$, we have to check that they fulfill the conditions of the generic construction:
\begin{itemize}
	\item $|C^1| \geq 3$;
	
	\item $(c^1_{i_0+2},v)$ is a good pair since $C^1$ passes through $v$ only once in the cycle by construction.
	
	\item $C^2$ is made of only one vertex, hence it has no uniform shortcut. $C^1$ has no uniform shortcut of length $0$ since $a$ has no loop. It has no uniform shortcut of length $-1$ because $(v,u) \notin \vec{E}$. We study a hypothetical shortcut that would allow $(a,c^1_j) \in \vec{E}$: notice that $(c^1_j,a) \in \vec{E}$ since $a$ has no loop. $j \in \{2,...,|C^1|-2\}$ hence one can use either $(a,c^1_j)$ or $(c^1_j,a)$ to build a cycle shorter than $C^1$ containing $a$, $u$ and $v$, which would therefore be a cycle that would have all the required properties. This is impossible by minimality of $C^1$.
	
	\item There can be cross-bridges between $C^1$ and $C^2$. However, a subtle line of reasoning explained below shows that here, keeping the indices used previously, we only have to avoid the cross-bridges [$(c^1_i,v)$ and $(v,c^1_{i+1}) \in \vec{E}$ with $c^1_i \neq v \neq c^1_{i+1}$] with $i = i_0-1$ and $i = i_0+2$ for our construction to work -- that is, for the information to be correctly transmitted.
	
	The case $i = i_0-1$ is impossible because $(v,c^1_{i_0}) = (v,u) \notin \vec{E}$. If $(v,c^1_{i_0+3}) \in \vec{E}$ then we can use it to find a cycle shorter than $C^1$ that contains everything we want -- unless $c^1_{i_0+2} = a$.
	
	If $c^1_{i_0+2} = a$ then we redefine $C^2 = \{u\}$ with which all that we have proved can be adapted: we only have to avoid the cross-bridges [$(c^1_i,v)$ and $(v,c^1_{i+1}) \in \vec{E}$ with $c^1_i \neq v \neq c^1_{i+1}$] with $i = i_0-2$ and $i = i_0+1$. Once again, this is impossible except if we also have $c^1_{i_0-1} = a$. Then $C^1$ is made of only three elements and this case is solved in \cref{subsubsec:length3}.
	
	\item As we will see, there is only one possibility for $C^1$ to have both an attractive and a repulsive vertex. Since $C^2 \subset C^1$, it is enough to consider attractive and repulsive vertices for $C^1$ that are located in $C^1$. Let $t$ be an attractor located in $C^1$ for all elements of $C^1$. Notably, $(a,t) \in \vec{E}$. Since $a$ has no loop, this Case 1.3 causes $(t,a) \in \vec{E}$. Similarly, for $p$ a repulsive vertex, we have not only $(p,a) \in \vec{E}$, but also $(a,p) \in \vec{E}$. Hence the shortest cycle that meets all our requirements is $(a, p, u, v, t)$, so the only possibility for $C^1$ to have both that does not contradict its minimality is to be this precise cycle (with some of the vertices being possibly equal). This is a case we treat in \cref{subsubsec:case1.3}.
\end{itemize}

\textbf{Cross-bridges remark for Case 1.3:} We prove why we only need to avoid two cross-bridges to be sure that the information is entirely transmitted. The basics of Case 1.3 are: $(u,v) \in \vec{E}$, $(v,u) \notin \vec{E}$, $a$ loopless, $C^1$ is a possibly self-intersecting cycle made of the concatenation of a path from $a$ to $u$ and one from $v$ to $a$, all unidirectional edges have a loop, $C^2 = \{v\}$.

Any diagonal region that contains the coding of a tile, delimited by buffers and possibly border slices above or below, does encode exactly one tile, see \cref{fig:complexcode}. Indeed, each of its vertical slices contains at least one of the elements among $\{c^1_{i_0}, c^1_{i_0+2}\}$ (since the buffer is given by $c^1_{i_0+1}$), and these are always part of a micro-slice that encodes something. By construction, the whole slice encodes the same thing, vertically.

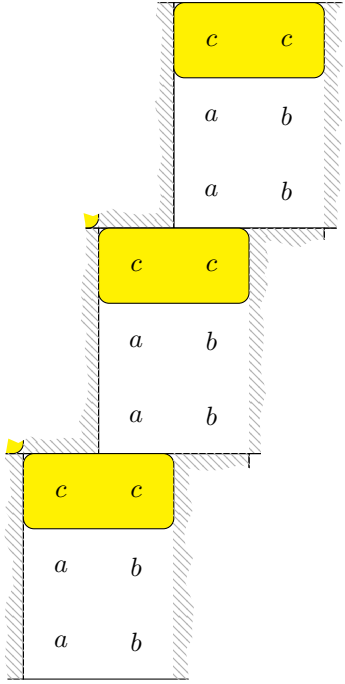
\begin{figure}[tb]
	\centering
	\scalebox{1}{
		\begin{tikzpicture}
			\clip[decorate,decoration={random steps,segment length=5pt,amplitude=2pt}] (-0.2, -9.2) -- (-0.2,-5.8) -- (0.8,-5.8) -- (0.8,-2.8) -- (1.8,-2.8) -- (1.8,0.2) -- (4.2,0.2) -- (4.2,-3.2) -- (3.2,-3.2) -- (3.2,-6.2) -- (2.2,-6.2) -- (2.2,-9.2) -- (-0.2,-9.2);
			
			\begin{scope}[xshift=-1cm]
				\draw (0,0) rectangle (3,-3);
				\draw (3,0) rectangle (6,-3);
				\draw (6,0) rectangle (9,-3);
				
				\draw[fill=yellow, rounded corners, inner sep=0.3mm] (0,-2) rectangle (2,-3);
				\draw[fill=yellow, rounded corners, inner sep=0.3mm] (3,0) rectangle (5,-1);
				\draw[fill=yellow, rounded corners, inner sep=0.3mm] (6,-1) rectangle (8,-2);
				
				\fill[pattern=north west lines, pattern color=black!30] (2,0) rectangle (3,-3);
				\fill[pattern=north west lines, pattern color=black!30] (5,0) rectangle (6,-3);
				\fill[pattern=north west lines, pattern color=black!30] (8,0) rectangle (9,-3);
				
				\draw (0.5,-0.5) node {$a$};
				\draw (0.5,-1.5) node {$a$};
				\draw (0.5,-2.5) node {$c$};
				\draw (1.5,-0.5) node {$b$};
				\draw (1.5,-1.5) node {$b$};
				\draw (1.5,-2.5) node {$c$};
				\draw (2.5,-0.5) node {$c$};
				\draw (2.5,-1.5) node {$c$};
				\draw (2.5,-2.5) node {$c$};
				
				\draw (3.5,-0.5) node {$c$};
				\draw (3.5,-1.5) node {$a$};
				\draw (3.5,-2.5) node {$a$};
				\draw (4.5,-0.5) node {$c$};
				\draw (4.5,-1.5) node {$b$};
				\draw (4.5,-2.5) node {$b$};
				\draw (5.5,-0.5) node {$c$};
				\draw (5.5,-1.5) node {$c$};
				\draw (5.5,-2.5) node {$c$};
				
				\draw (6.5,-0.5) node {$a$};
				\draw (6.5,-1.5) node {$c$};
				\draw (6.5,-2.5) node {$a$};
				\draw (7.5,-0.5) node {$b$};
				\draw (7.5,-1.5) node {$c$};
				\draw (7.5,-2.5) node {$b$};
				\draw (8.5,-0.5) node {$c$};
				\draw (8.5,-1.5) node {$c$};
				\draw (8.5,-2.5) node {$c$};
			\end{scope}

			\begin{scope}[xshift=-2cm,yshift=-3cm]
				\draw (0,0) rectangle (3,-3);
				\draw (3,0) rectangle (6,-3);
				\draw (6,0) rectangle (9,-3);
				
				\draw[fill=yellow, rounded corners, inner sep=0.3mm] (0,-2) rectangle (2,-3);
				\draw[fill=yellow, rounded corners, inner sep=0.3mm] (3,0) rectangle (5,-1);
				\draw[fill=yellow, rounded corners, inner sep=0.3mm] (6,-1) rectangle (8,-2);
				
				\fill[pattern=north west lines, pattern color=black!30] (2,0) rectangle (3,-3);
				\fill[pattern=north west lines, pattern color=black!30] (5,0) rectangle (6,-3);
				\fill[pattern=north west lines, pattern color=black!30] (8,0) rectangle (9,-3);
				
				\draw (0.5,-0.5) node {$a$};
				\draw (0.5,-1.5) node {$a$};
				\draw (0.5,-2.5) node {$c$};
				\draw (1.5,-0.5) node {$b$};
				\draw (1.5,-1.5) node {$b$};
				\draw (1.5,-2.5) node {$c$};
				\draw (2.5,-0.5) node {$c$};
				\draw (2.5,-1.5) node {$c$};
				\draw (2.5,-2.5) node {$c$};
				
				\draw (3.5,-0.5) node {$c$};
				\draw (3.5,-1.5) node {$a$};
				\draw (3.5,-2.5) node {$a$};
				\draw (4.5,-0.5) node {$c$};
				\draw (4.5,-1.5) node {$b$};
				\draw (4.5,-2.5) node {$b$};
				\draw (5.5,-0.5) node {$c$};
				\draw (5.5,-1.5) node {$c$};
				\draw (5.5,-2.5) node {$c$};
				
				\draw (6.5,-0.5) node {$a$};
				\draw (6.5,-1.5) node {$c$};
				\draw (6.5,-2.5) node {$a$};
				\draw (7.5,-0.5) node {$b$};
				\draw (7.5,-1.5) node {$c$};
				\draw (7.5,-2.5) node {$b$};
				\draw (8.5,-0.5) node {$c$};
				\draw (8.5,-1.5) node {$c$};
				\draw (8.5,-2.5) node {$c$};
			\end{scope}

			\begin{scope}[xshift=-3cm,yshift=-6cm]
				\draw (0,0) rectangle (3,-3);
				\draw (3,0) rectangle (6,-3);
				\draw (6,0) rectangle (9,-3);
				
				\draw[fill=yellow, rounded corners, inner sep=0.3mm] (0,-2) rectangle (2,-3);
				\draw[fill=yellow, rounded corners, inner sep=0.3mm] (3,0) rectangle (5,-1);
				\draw[fill=yellow, rounded corners, inner sep=0.3mm] (6,-1) rectangle (8,-2);
				
				\fill[pattern=north west lines, pattern color=black!30] (2,0) rectangle (3,-3);
				\fill[pattern=north west lines, pattern color=black!30] (5,0) rectangle (6,-3);
				\fill[pattern=north west lines, pattern color=black!30] (8,0) rectangle (9,-3);
				
				\draw (0.5,-0.5) node {$a$};
				\draw (0.5,-1.5) node {$a$};
				\draw (0.5,-2.5) node {$c$};
				\draw (1.5,-0.5) node {$b$};
				\draw (1.5,-1.5) node {$b$};
				\draw (1.5,-2.5) node {$c$};
				\draw (2.5,-0.5) node {$c$};
				\draw (2.5,-1.5) node {$c$};
				\draw (2.5,-2.5) node {$c$};
				
				\draw (3.5,-0.5) node {$c$};
				\draw (3.5,-1.5) node {$a$};
				\draw (3.5,-2.5) node {$a$};
				\draw (4.5,-0.5) node {$c$};
				\draw (4.5,-1.5) node {$b$};
				\draw (4.5,-2.5) node {$b$};
				\draw (5.5,-0.5) node {$c$};
				\draw (5.5,-1.5) node {$c$};
				\draw (5.5,-2.5) node {$c$};
				
				\draw (6.5,-0.5) node {$a$};
				\draw (6.5,-1.5) node {$c$};
				\draw (6.5,-2.5) node {$a$};
				\draw (7.5,-0.5) node {$b$};
				\draw (7.5,-1.5) node {$c$};
				\draw (7.5,-2.5) node {$b$};
				\draw (8.5,-0.5) node {$c$};
				\draw (8.5,-1.5) node {$c$};
				\draw (8.5,-2.5) node {$c$};

				\draw (9,0) rectangle (12,-3);
				
				\draw[fill=yellow, rounded corners, inner sep=0.3mm] (9,-1) rectangle (11,-2);
				
				\fill[pattern=north west lines, pattern color=black!30] (11,0) rectangle (12,-3);
				
				\draw (9.5,-0.5) node {$a$};
				\draw (9.5,-1.5) node {$c$};
				\draw (9.5,-2.5) node {$a$};
				\draw (10.5,-0.5) node {$b$};
				\draw (10.5,-1.5) node {$c$};
				\draw (10.5,-2.5) node {$b$};
				\draw (11.5,-0.5) node {$c$};
				\draw (11.5,-1.5) node {$c$};
				\draw (11.5,-2.5) node {$c$};
			\end{scope}
			
		\end{tikzpicture}
	}
	\caption{A coding region reused from \cref{codingblock}, delimited by buffers, with an example of transmission that is ensured between the different slices of it.}
	\label{fig:complexcode}
\end{figure}

Moreover, two adjacent slices also contain the same encoding, using the fact that there is no cross-bridge for $i=i_0-1$ or $i=i_0+2$. Indeed, suppose we have a $c^1_{i_0}$ coding micro-slice in the rightmost slice. To its left, in the second rightmost slice, is a $c^1_{i_0-1}$ coding micro-slice that encodes the same thing,  since there is no cross-bridge for $i=i_0-1$. Since we force two vertically adjacent coding micro-slices to encode the same tile if none of them is a buffer, the coding micro-slice using $c^1_{i_0}$, that is below the one using $c^1_{i_0-1}$, encodes the same tile as the latter. But to the left of the $c^1_{i_0}$ coding micro-tile is a $c^1_{i_0-1}$ coding micro-tile that encodes the same tile. Below this one is, once again, a $c^1_{i_0}$ coding micro-tile that encodes the same tile by construction... The same reasoning works when starting from a $c^1_{i_0+2}$ coding micro-slice in the leftmost slice: it encodes the same thing as the $c^1_{i_0+3}$ coding micro-slice to its right since there is no cross-bridge for $i=i_0+2$, etc.

\subsection{Case 2}
\label{subsec:case2}

Here, we assume that $\mathcal{G}(H)$ contains no loop but at least one bidirectional edge.

\textbf{Case 2.1:} we can find one cycle of length at least $3$ with no repeated vertex, with at least one bidirectional edge.

\begin{center}
	\begin{tikzpicture}[scale=0.4]
		\def \radius {1.5cm};
		
		\node[draw, circle,scale=0.7] (a) at (0:\radius) {$v$};
		\node[draw, circle,scale=0.7] (b) at (72:\radius) {$w$};
		\node[draw, circle] (c) at (144:\radius) {};
		\node[draw, circle] (d) at (216:\radius) {};
		\node[draw, circle,scale=0.7] (e) at (288:\radius) {$u$};
		
		\draw (0,0) node {$C^1$};
		
		\draw[->, >=latex,color=red] (a) to[bend right=15] (b);
		\draw[->, >=latex] (b) to[bend right=15] (c);
		\draw[->, >=latex] (c) to[bend right=15] (d);
		\draw[->, >=latex] (d) to[bend right=15] (e);
		\draw[->, >=latex] (e) to[bend right=15] (a);
		
		\draw[->, >=latex,color=red] (b) to[bend right=15] (a);
		\draw[->, >=latex] (c) to[bend right=15] (b);
	\end{tikzpicture}
\end{center}

Focus on a cycle that contains both a bidirectional and a unidirectional edge -- we can find one, else $\mathcal{G}(H)$ would be of symmetric type. Just as in Case 1.2, reduce it iteratively so that it ends up either in a graph -- as small as possible -- similar to the one of Case 2.2 and is therefore treated similarly; or as the smallest cycle with both a bidirectional and a unidirectional edge that does not contain a graph as in Case 2.2.

Name $C^1$ this cycle; name $u$, $v$ and $w$ some successive vertices in $C^1$ such that $(u,v), (v,w)$ and $(w,v) \in \vec{E}$ but $(v,u) \notin \vec{E}$; and set $C^2 = \{v,w\}$. Also, define $c^1_0 = v = c^2_0$.

As in Case 1.2, notice that all edges from $w$ must lead either to $c^1_2$ or to $v = c^1_0$, else we could find a shorter cycle $C^1$. There remains to check that $C^1$ and $C^2$ have the properties we want:
\begin{itemize}
	\item $|C^1| \geq 3$;
	
	\item $C^1$ and $C^2$ have a good pair, that is $(c^1_2,v)$;
	
	\item $C^2$ has no uniform shortcut since it is of length $2$ with no loop. If $C^1$ had a uniform shortcut, it could not be of size $0$ (because all vertices are loopless) or of size $-1$ (because $(v,u) \notin \vec{E}$). Any other size of shortcut is impossible due to the aforementioned property of $w$.
	
	\item If there was a cross-bridge, we reach a contradiction in the exact same fashion as what is done in the case of a cross-bridge in Case 1.2.
	
	\item There cannot be any attractive or repulsive vertex for $C^1$ located in $C^1$ since no vertex has a loop in the present case. None can be located in $C^2$ either since $C^2 \subset C^1$.
\end{itemize}

\textbf{Case 2.2:} any cycle of length at least $3$ with no repeated vertex contains no bidirectional edge.

\begin{center}
	\begin{tikzpicture}[scale=0.4,rotate=-90]
		\def \radius {1.5cm};
		
		\node[draw, circle,scale=0.7] (a) at (0:\radius) {$v$};
		\node[draw, circle] (b) at (72:\radius) {};
		\node[draw, circle] (c) at (144:\radius) {};
		\node[draw, circle] (d) at (216:\radius) {};
		\node[draw, circle] (e) at (288:\radius) {};
		
		\node[draw, circle,scale=0.7] (f) at (3.5,0) {$w$};
		
		\draw (0,0) node {$C^1$};
		
		\draw[->, >=latex] (a) to[bend right=15] (b);
		\draw[->, >=latex] (b) to[bend right=15] (c);
		\draw[->, >=latex] (c) to[bend right=15] (d);
		\draw[->, >=latex] (d) to[bend right=15] (e);
		\draw[->, >=latex] (e) to[bend right=15] (a);
		
		\draw[->, >=latex,color=red] (a) to[bend right=15] (f);
		\draw[->, >=latex,color=red] (f) to[bend right=15] (a);
	\end{tikzpicture}
\end{center}

Since there are bidirectional edges in $\mathcal{G}(H)$ (hypothesis of Case 2), which is strongly connected, we can find at least one cycle of length $\geq 3$ with no repeated vertex that has one vertex in common with a bidirectional edge. Choose a minimal cycle among these ones, call it $C^1$; name $v$ the vertex it has in common with the bidirectional edge, and $w$ the other vertex. We define $C^2 = \{v, w\}$. Call $c^1_0 = v = c^2_0$.
\begin{itemize}
	\item $|C^1| \geq 3$;
	
	\item $(c^1_1,w)$ is a good pair for $C^1$ and $C^2$;
	
	\item $C^2$ has no uniform shortcut since it is of length $2$ with no loop. If $C^1$ had uniform shortcuts, they could not be of length $0$ because none of its vertices has a loop; they could not be of length $-1$ because none of its edges is bidirectional; and they could not be of any other length else the shortcut starting from $v$ would allow us to define a strictly shorter cycle with the same property, contradicting the minimality of $C^1$.
	
	\item If there was a cross-bridge, we could have two cases:
	\begin{itemize}
		\item First is $(c^1_i,v)$ and $(w,c^1_{i+1}) \in \vec{E}$, with $c^1_i \neq w$ and $v \neq c^1_{i+1}$. Then we could use the edge $(w,c^1_{i+1})$ for the following cycle: $\{ w, c^1_{i+1}, c^1_{i+2}, ..., v \}$. It would be of length at least $3$ since $c^1_{i+1} \neq v$, no vertex would repeat, and it would contain one bidirectional edge. This is impossible by assumption of Case 2.2.
		
		\item Second is $(c^1_i,w)$ and $(v,c^1_{i+1}) \in \vec{E}$, with $c^1_i \neq v$ and $w \neq c^1_{i+1}$. Then we could use the edge $(v,c^1_{i+1})$ to define a cycle strictly shorter than $C^1$, with the same properties, of length at least $3$ ($c^1_{i+1}$ cannot precede $v$, else we would have a bidirectional edge). This is impossible.
	\end{itemize}
	
	\item There is no attractive or repulsive vertex for $C^1$ located in $C^1$ since none has a loop. But it seems that there can be an attractive and/or repulsive vertex for $C^1$ located in $C^2$, that is, $w$. Nevertheless, if $w$ was both attractive and repulsive, using part of $C^1$ we could build a cycle of length at least $3$ with no repeated vertex including $v$ and $w$, hence a bidirectional edge. This is forbidden in this Case 2.2.
\end{itemize}

\subsection{Case 3}
\label{subsec:case3}

In this subsection, we assume $\mathcal{G}(H)$ contains no loop and no bidirectional edge.

\textbf{Case 3.1:} considering the smallest cycle $C$ in $\mathcal{G}(H)$ one can find, there exists a path $\gamma$ between two different vertices of $C$ that does not intercept $C$ elsewhere, and $\gamma$ is of a length different from the length between these vertices inside $C$.

\begin{center}
	\begin{tikzpicture}[scale=0.4,rotate=-90]
		\def \radius {1.5cm};
		
		\node[draw, circle] (a) at (0:\radius) {};
		\node[draw, circle] (b) at (72:\radius) {};
		\node[draw, circle] (c) at (144:\radius) {};
		\node[draw, circle] (d) at (216:\radius) {};
		\node[draw, circle] (e) at (288:\radius) {};
		
		\node[draw, circle] (f) at (-2.5,0.8) {};
		\node[draw, circle] (g) at (-2.5,-0.8) {};
		
		\draw (0,0) node {$C^1$};
		
		\draw[->, >=latex,color=red] (a) to[bend right=15] (b);
		\draw[->, >=latex,color=red] (b) to[bend right=15] (c);
		\draw[->, >=latex] (c) to[bend right=15] (d);
		\draw[->, >=latex,color=red] (d) to[bend right=15] (e);
		\draw[->, >=latex,color=red] (e) to[bend right=15] (a);
		
		\draw[->, >=latex,color=red] (c) to[bend right=15] (f);
		\draw[->, >=latex,color=red] (f) to[bend right=15] (g);
		\draw[->, >=latex,color=red] (g) to[bend right=15] (d);
	\end{tikzpicture}
\end{center}

Define $C^1$ a cycle with said property for a path $\gamma$ between two of its vertices so that $C^1$ is a cycle of minimal length. If there are several cycles of minimal length, choose one so that we can find a path $\gamma$ as short as possible. Now, we name the vertices: $\gamma$ is a path between $c^1_0$ and $c^1_k$, that is not of length $k$. Define $C^2$ as the concatenation of $\gamma$ and $( c^1_k, c^1_{k+1}, ..., c^1_0 )$, with $c^2_0 := c^1_0$ and $c^2_l = c^1_k$ with $l = |\gamma| > k$.

$C^1$ and $C^2$ verify all the conditions we need:
\begin{itemize}
	\item $|C^1| \geq 3$ since there is no bidirectional edge;
	
	\item $(c^1_1, c^2_1)$ is a good pair;
	
	\item In both $C^1$ and $C^2$, we have no uniform shortcut of length $0$ or $-1$ since there are no loops and no bidirectional edges. $C^1$ cannot have any other length of uniform shortcut, or even any edge between two of its vertices, else we could find a strictly smaller cycle with a path of a different length between two of its vertices.
	
	Suppose $C^2$ has a uniform shortcut of length $j$. The point if it happens is to build a new cycle, $C^3$, so that $C^1$ and $C^3$ work for the generic construction in \cref{subsec:core}.
	If we have an uniform shortcut, then $c^2_0 = c^1_0$ cannot lead to an element that $C^2$ shares with $C^1$ by minimality of the latter, hence the edge of its shortcut must lead to some $c^2_j$, with $1< j < l$ the uniform length of the shortcuts. Necessarily, $(c^1_0 = c^2_0, c^2_j, c^2_{j+1}, ..., c^2_l = c^1_k)$ being a path between two elements of $C^1$ that is strictly shorter than $\gamma$, we have $l-j+1 = k$ in order not to reach a contradiction. Hence $j = l-k+1$. Then $C^\prime := (c^2_0, c^2_{l-k+1}, c^2_{l-k+2}, ..., c^2_l = c^1_k, c^1_{k+1}, ..., c^1_{-1})$ is a cycle of length $|C^1|$.
	
	First, we study the case $k \neq 1$. Then $(c^2_0, c^2_1, c^2_2, ..., c^2_{l-k+1})$ is a path of length $l-k+1 < l$, linking two elements of $C^\prime$ (these elements are $c^2_0$ and $c^2_{l-k+1}$, which are consecutive in $C^\prime$). Since $C^\prime$ is of length $|C^1|$ and we found a path of length smaller than $l$ joining two of its vertices, this fact contradicts the minimality of $C^1$, so we cannot actually have $k \neq 1$.
	
	Necessarily $k=1$. Then $j = l$, and the edge between $c^2_0$ and $c^2_l = c^1_k = c^1_1$ is already part of $C^1$ -- it is $(c^1_0, c^1_1)$. We have $l>k$ so $l \neq 1$; if $l=2$ then $j=2$ so $c^2_2=c^1_1$ would have an edge going to $c^2_4 = c^1_3$, an element of $C^1$ necessarily (possibly $c^1_0$). This is impossible by minimality of $C^1$. We deduce that $l>2$.
	
	We set $C^3 := (c^2_0, c^2_1, c^2_2, c^1_{3}, c^1_{4}, ..., c^1_{|C^1|-1})$ using the edge $(c^2_2,c^2_{l+2})$ since $j = l$, which is the edge $(c^2_2,c^1_{3})$ since $c^2_l = c^1_1$.
	There is a specific case if $c^1_3 = c^1_0 = c^2_0$, where both $C^1$ and $C^3$ end up being triangles, but the reasoning below still holds.
	
	Instead of using $C^1$ and $C^2$, we check that choosing $C^1$ and $C^3$ for our generic construction works well:
	\begin{itemize}
		\item $|C^1| \geq 3$, this does not change;
		
		\item $(c^1_1, c^2_1)$ is still a good pair for $C^1$ and $C^3$;
		
		\item $C^1$ being still defined the same way, it does not contain any uniform shortcut or even any edge between two of its vertices. If $C^3$ contains uniform shortcuts, the one starting at $c^1_0 = c^2_0$ must lead to $c^2_2$ since it must be of size different from $1$ and it must not lead to an element in common with $C^1$, because the latter is minimal. But if $(c^2_0,c^2_2) \in \vec{E}$, then we could find a path strictly shorter than $\gamma$ between $c^1_0$ and $c^1_1$, that would not be of length $1$ (because $c^2_2 \neq c^2_l$). This contradicts the minimality of $C^1$.
		
		\item Since no edge between two non-consecutive elements of $C^1$ is possible, the unique cross-bridge between $C^1$ and $C^3$ would be some $(c^1_i,c^2_2)$ and $(c^2_1,c^1_{i+1}) \in \vec{E}$. But it would also be a cross-bridge between $C^1$ and $C^2$, and this is in all cases impossible, see below.
		
		\item There is no attractive or repulsive vertex for $C^1$ in $C^1$ since no element of $C^1$ has a loop. Suppose there are both an attractive and a repulsive vertex for $C^1$ located in $C^3 \setminus C^1$, call them $t^3$ and $p^3$. They must be distinct (because the graph has no bidirectional edge) and not be in $C^1$; hence $C^3$ contains at least $2$ exclusive vertices.
		
		Then the idea is to use $C^1$ as $C^3$ in the generic construction and vice versa: $|C^3| \geq 3$, and all of our other properties hold when swapping $C^3$ and $C^1$, except the attractive and repulsive conditions. Hence the only facts that we have to verify to exchange their roles is that there is no attractive or repulsive vertex for $C^3$ located either in $C^3$ or in $C^1$. There is none in $C^3$ because no vertex of $C^3$ has a loop. If there was both an attractive and a repulsive vertices for $C^3$ in $C^1$, call them $t^1$ and $p^1$, then notably $t^1$ would lead to $t^3$ and vice versa... But no bidirectional edge exists here. So, up to exchanging what is $C^1$ and what is $C^3$, we cannot have both an attractive and a repulsive vertex for $C^1$. This reasoning will be applied again and be called the \emph{trick of exchanging the roles}.
	\end{itemize}
	Therefore in the worst case, if there are uniform shortcuts in $C^2$, we can build the generic construction from \cref{subsec:core} with $C^1$ and $C^3$.
	
	\item Suppose we have a cross-bridge, that is, $(c^1_i,c^2_{j+1})$ and $(c^2_j,c^1_{i+1}) \in \vec{E}$. Since $C^1$ is minimal, $c^2_j$ and $c^2_{j+1}$ are not elements of $C^1$, so $j+1<l$. Then $(c^1_0 = c^2_0, c^2_1, ..., c^2_j, c^1_{i+1})$ is a path between two elements of $C^1$ that is necessarily strictly shorter than $\gamma$ since $j+1 < l$ (with $c^2_l = c^1_k$). This is possible only if the obtained path is of length $i+1$, the same length as the one between $c^1_0$ and $c^1_{i+1}$ in $C^1$. It would mean that $j = i$, but then $(c^1_i, c^2_{j+1}, c^2_{j+2}, ..., c^2_l = c^1_k)$ would also be a path $\gamma^\prime$ shorter than $\gamma$, and the only possibility is then that $k-i+1 = l-j+1$ (the distance between $c^1_i$ and $c^1_k$ is the length of $\gamma^\prime$), of which we deduce $k=l$, which is impossible.
	
	\item Once again, for attractive and repulsive vertices we use the trick of exchanging the roles of $C^1$ and $C^2$.
\end{itemize}

\textbf{Case 3.2:} considering the smallest cycle $C$ in $\mathcal{G}(H)$ with no repeated vertex, any path $\gamma$ we can find between two different vertices of $C$ that does not intercept $C$ elsewhere is of the same length as the length between these vertices inside $C$; and we can find at least one such path $\gamma$.

\begin{center}
	\begin{tikzpicture}[scale=0.4,rotate=-90]
		\def \radius {1.5cm};
		
		\node[draw, circle] (a) at (0:\radius) {};
		\node[draw, circle] (b) at (72:\radius) {};
		\node[draw, circle] (c) at (144:\radius) {};
		\node[draw, circle] (d) at (216:\radius) {};
		\node[draw, circle] (e) at (288:\radius) {};
		
		\node[draw, circle] (f) at (-2.5,1) {};
		\node[draw, circle] (g) at (-2.5,-1) {};
		
		\draw (0,0) node {$C^1$};
		
		\draw[->, >=latex,color=red] (a) to[bend right=15] (b);
		\draw[->, >=latex] (b) to[bend right=15] (c);
		\draw[->, >=latex] (c) to[bend right=15] (d);
		\draw[->, >=latex] (d) to[bend right=15] (e);
		\draw[->, >=latex,color=red] (e) to[bend right=15] (a);
		
		\draw[->, >=latex,color=red] (b) to[bend right=15] (f);
		\draw[->, >=latex,color=red] (f) to[bend right=15] (g);
		\draw[->, >=latex,color=red] (g) to[bend right=15] (e);
		
		\draw[->, >=latex] (c) to[bend right=15] (g);
	\end{tikzpicture}
\end{center}

Define $C^1 := C$. We use the following algorithm: we start with $V_0 := \{c^1_0\}$ and $(V_i)_{i \in [1, |C^1|]}$ empty. Then we recursively append to $V_{i+1}$ all vertices $w$ in $\mathcal{G}(H)$ so that there is a $v \in V_i$ with $(v,w) \in \vec{E}$, with a modulo $|C^1|$ on the index so that $V_{|C^1|} = V_0$.
The algorithm halts when it tries to append vertices to a $V_i$ that are all already in it, which happens because $\mathcal{G}(H)$ is made of a finite number of vertices. The fact that no path exterior to $C^1$ is of a different length than the corresponding path $C^1$, plus the absence of any loop or bidirectional edge, makes all the $V_i$ disjoint. Finally, the strong connectivity we assumed ensures that $H = \bigsqcup_{i=0}^{|C^1|-1} V_i$.

We use the fact that $H$ does not verify condition $D$, specifically is not of state-split cycle type. Since by construction, for any $v \in V_i$, we have $(v,w) \in \vec{E} \Rightarrow w \in V_{i+1}$, the only possibility is that $\exists v \in V_{i_0}, \exists w^\prime \in V_{i_0+1}, (v,w^\prime) \notin \vec{E}$. However, we also have some $v^\prime \in V_{i_0+1}, (v,v^\prime) \in \vec{E}$ and $w \in V_{i_0}, (w, w^\prime) \in \vec{E}$. Obviously, the four vertices are different. Now, take a path $\gamma_1$ from $v^\prime$ to $v$ that is as short as possible. Take a different path $\gamma_2$ from $w^\prime$ to $w$ that still has as many possible vertices in common with $\gamma_1$. We redefine $C^1$ as the concatenation of $\gamma_1$ and $(v,v^\prime)$ and $C^2$ as the concatenation of $\gamma_2$ and $(w,w^\prime)$.

It is rather easy to see that the properties we need for our generic construction are verified:

\begin{itemize}
	\item $|C^1| \geq 3$, since there is no bidirectional edge;
	
	\item The unique common part between $C^1$ and $C^2$ is the biggest sequence of vertices $\gamma_1$ and $\gamma_2$ have in common, so starting from the first pair on which they disagree we obtain a good pair;
	
	\item There is no uniform shortcut between $C^1$ and $C^2$ if $\gamma_1$ was chosen minimal and $\gamma_2$ as close to $\gamma_1$ as possible;
	
	\item There is no cross-bridge for the same reason;
	
	\item For attractive and repulsive vertices, we use the trick of exchanging the roles of $C^1$ and $C^2$ described in Case 3.1.
\end{itemize}

\textbf{Case 3.3:} considering the smallest cycle $C$ in $\mathcal{G}(H)$ with no repeated vertex, we can find no path between two different vertices of this cycle.

\begin{center}
	\begin{tikzpicture}[scale=0.4]
		\def \radius {1.5cm};
		
		\node[draw, circle] (a) at (0:\radius) {};
		\node[draw, circle] (b) at (72:\radius) {};
		\node[draw, circle] (c) at (144:\radius) {};
		\node[draw, circle] (d) at (216:\radius) {};
		\node[draw, circle] (e) at (288:\radius) {};
		
		\node[draw, circle] (f) at (2.5,-0.8) {};
		\node[draw, circle] (g) at (2.5,0.8) {};
		
		\draw (0,0) node {$C^1$};
		
		\draw[->, >=latex] (a) to[bend right=15] (b);
		\draw[->, >=latex] (b) to[bend right=15] (c);
		\draw[->, >=latex] (c) to[bend right=15] (d);
		\draw[->, >=latex] (d) to[bend right=15] (e);
		\draw[->, >=latex] (e) to[bend right=15] (a);
		
		\draw[->, >=latex,color=red] (a) to[bend right=15] (f);
		\draw[->, >=latex,color=red] (f) to[bend right=15] (g);
		\draw[->, >=latex,color=red] (g) to[bend right=15] (a);
	\end{tikzpicture}
\end{center}

Since $\mathcal{G}(H)$ does not verify condition $D$, it cannot be a plain cycle, hence a path exists from one vertex of $C$ to itself that does not intersect $C$ elsewhere. Define $C^1:=C$ and this vertex as $c^1_0$. Then, considering the smallest path $\gamma$ from $c^1_0$ to itself outside of $C^1$, we define $C^2 := \gamma$, with $c^2_0 := c^1_0$.

It remains to check that these $C^1$ and $C^2$ verify the properties we need.

\begin{itemize}
	\item $|C^1| \geq 3$;
	
	\item $C^1$ and $C^2$ have exactly one vertex in common, $c^1_0 = c^2_0$, and so $(c^1_1,c^2_1)$ is a good pair;
	
	\item There is no uniform shortcut of length $0$ or $-1$ neither in $C^1$ nor in $C^2$, since there are no loop and no bidirectional edge. There is no uniform shortcut of any other length; else consider the edge starting at $c^1_0$, be it in $C^1$ or in $C^2$: it would allow us to build a shorter $C^1$ or a shorter $C^2$, contradicting the fact that the two of them have been chosen to be minimal.
	
	\item There is no cross-bridge between $C^1$ and $C^2$ because it would allow us to build path outside of $C^1$ between two distinct elements of $C^1$, which is impossible in this case;
	
	\item There is no attractive or repulsive vertex for $C^1$ in $C^1$, because no element of $C^1$ has a loop. There is no attractive or repulsive vertex for $C^1$ in $C^2$ else we could build a path outside of $C^1$ between two distinct elements of $C^1$, which is impossible in this case.
\end{itemize}

\subsection{Additional cases}
\label{subsec:additional}

\subsubsection{Length 3 Cases:}
\label{subsubsec:length3}

For most three-vertex graphs, we can apply the generic construction from \cref{subsec:core} without any problem. However, some of them require to be slightly more cautious, because some properties are missing. These are, up to a change of labels:

\begin{center}
	\begin{tikzpicture}[scale=0.8]
		
		\begin{scope}
			\node[draw, circle] (b) {$b$};
			\node[draw, circle] (a) [below left = 0.9cm and 0.5cm of b] {$a$};
			\node[draw, circle] (c) [below right = 0.9cm and 0.5cm of b] {$c$};
			
			\draw[->, >=latex] (a) to[bend left=20] (b);
			\draw[->, >=latex] (b) to[bend left=20] (c);
			\draw[->, >=latex] (c) to[bend left=20] (a);
			
			\draw[->, >=latex] (c) to[bend left=20] (b);
			
			\draw[->, >=latex] (b) to[loop above] (b);
			\draw[->, >=latex] (c) to [out=330,in=300,looseness=12] (c);
		\end{scope}
		
		\begin{scope}[xshift=4cm]
			\node[draw, circle] (b) {$b$};
			\node[draw, circle] (a) [below left = 0.9cm and 0.5cm of b] {$a$};
			\node[draw, circle] (c) [below right = 0.9cm and 0.5cm of b] {$c$};
			
			\draw[->, >=latex] (a) to[bend left=20] (b);
			\draw[->, >=latex] (b) to[bend left=20] (c);
			\draw[->, >=latex] (c) to[bend left=20] (a);
			
			\draw[->, >=latex] (c) to[bend left=20] (b);
			\draw[->, >=latex] (a) to[bend left=20] (c);
			
			\draw[->, >=latex] (c) to [out=330,in=300,looseness=12] (c);
		\end{scope}
		
		\begin{scope}[xshift=8cm]
			\node[draw, circle] (b) {$b$};
			\node[draw, circle] (a) [below left = 0.9cm and 0.5cm of b] {$a$};
			\node[draw, circle] (c) [below right = 0.9cm and 0.5cm of b] {$c$};
			
			\draw[->, >=latex] (a) to[bend left=20] (b);
			\draw[->, >=latex] (b) to[bend left=20] (c);
			\draw[->, >=latex] (c) to[bend left=20] (a);
			
			\draw[->, >=latex] (c) to[bend left=20] (b);
			\draw[->, >=latex] (a) to[bend left=20] (c);
			
			\draw[->, >=latex] (b) to[loop above] (b);
			\draw[->, >=latex] (c) to [out=330,in=300,looseness=12] (c);
		\end{scope}
		
	\end{tikzpicture}
\end{center}

Here, there are both an attractive and a repulsive vertex. Still, similarly to case 1.1, $a$ has in-degree $1$ since only $c$ leads to $a$, and so \cref{propapproxaligned} holds, since $a$ forces the alignment of columns.

Besides, we have to be careful about the cross-bridge property. In the first example, we choose $C^2=(b)$ (there is no cross-bridge then because $(b,a) \notin \vec{E}$). In the second and in the third, we choose $C^2=(a,c)$ ($a$ has no loop hence there is no problem of cross-bridge or of uniform shortcut in $C^2$). All the other properties from Condition $C$ are verified.

\begin{center}
	\begin{tikzpicture}
		\node[draw, circle] (b) {$b$};
		\node[draw, circle] (a) [below left = 0.9cm and 0.5cm of b] {$a$};
		\node[draw, circle] (c) [below right = 0.9cm and 0.5cm of b] {$c$};
		
		\draw[->, >=latex] (a) to[bend left=20] (b);
		\draw[->, >=latex] (b) to[bend left=20] (c);
		\draw[->, >=latex] (c) to[bend left=20] (a);
		
		\draw[->, >=latex] (b) to[bend left=20] (a);
		\draw[->, >=latex] (a) to[bend left=20] (c);
		
		\draw[->, >=latex] (b) to[loop above] (b);
		\draw[->, >=latex] (c) to [out=330,in=300,looseness=12] (c);
	\end{tikzpicture}
\end{center}

Finally, here we have three problems:

\begin{itemize}
	\item We could have cross-bridges, so to avoid them we choose $C^2 = (a, b)$ (we have no problem of uniform shortcut in $C^2$ since $a$ has no loop);
	\item We have attractive and repulsive vertices;
	\item And here we cannot rely on an element of the alphabet that must be followed or preceded by a specific other one to solve that problem.
\end{itemize}

The reasoning is slightly more subtle then: if we try to perform the generic construction, take two successive columns $K_1$ and $K_2$. In any macro-slice of $K_1$, there is some $a$ meso-slice (made of $N M$ symbols $a$) that is vertically preceded by a $c$ meso-slice. The $a$ meso-slice must not be next to any symbol $a$ in column $K_2$. But then if there is any $c$ in the part of $K_2$ horizontally adjacent to this $a$ meso-slice, the aforementioned $c$ meso-slice of $K_1$ is horizontally followed by at least one $b$ (be it from a regular meso-slice, a border, or a code); but this cannot be. Hence an $a$ meso-slice in $K_1$ can only be horizontally followed by symbols $b$. So two columns are always aligned even if we have attractive and repulsive vertices.

The rest of the generic construction works normally.

\subsubsection{Specificity of Case 1.3:}
\label{subsubsec:case1.3}

We focus on specific subcase where $C^2 = (c)$ and $C^1 = (a,p,b,c,t)$ where all vertices must not necessarily be different, with $t$ attractive, $p$ repulsive, $(c,b) \notin \vec{E}$, loops on $b$ and $c$, and $a$ loopless. Additionally, the initial and terminal vertices of any unidirectional edge must have a loop. Five cases can happen; here we treat only the fourth one, all the others are done similarly. Note that the fifth case is treated among the Length 3 Cases.
\begin{itemize}
	\item All elements are distinct;
	\item $p=t$ and all others distinct;
	\item $c=t$ and all others distinct;
	\item $b=p$ and all others distinct;
	\item $b=p$ and $c=t$: this is one of the three-vertex graphs we have seen before.
\end{itemize}

Note that in all those cases, stemming from the analyze performed in case 1.3 in \cref{subsec:case1}, our generic construction seems to work except for the presence of both attractive and repulsive vertices, that endangers \cref{propapproxaligned}. The only fact that we have to check is that we can circumvent this obstacle in a way similar to what is done in \cref{subsubsec:length3}.

If $b=p$, we obtain the following graph:

\begin{center}
	\scalebox{0.8}{
		\begin{tikzpicture}[scale=0.7]
			
			\def \radius {1.5cm};
			
			\node[draw, circle] (a) at (0:\radius) {$a$};
			\node[draw, circle] (p) at (72:\radius) {$p$};
			\node[draw, circle] (c) at (216:\radius) {$c$};
			\node[draw, circle] (t) at (288:\radius) {$t$};
			
			\draw[->, >=latex] (a) to[bend right=15] (p);
			\draw[->, >=latex] (c) to[bend right=15] (t);
			\draw[->, >=latex] (t) to[bend right=15] (a);
			
			\draw[->, >=latex] (p) to[bend right=15] (a);
			\draw[->, >=latex] (p) to[bend right=15] (c);
			\draw[->, >=latex] (p) to[bend right=15] (t);
			\draw[->, >=latex] (p) to[out=60,in=90,looseness=12] (p);
			
			\draw[->, >=latex] (a) to[bend right=15] (t);
			\draw[->, >=latex] (t) to[out=280,in=310,looseness=12] (t);
			
			\draw[->, >=latex] (c) to[out=230,in=200,looseness=12] (c);
			
			\draw[->, >=latex,color=red] (t) to[bend right=15] (p);
			\draw[->, >=latex,color=green] (t) to[bend right=15] (c);
			
		\end{tikzpicture}
	}
\end{center}

We added the red edge so that we cannot reduce the graph to a strictly smaller cycle (with three vertices) on which we already proved the generic construction worked. If there was an edge between $a$ and $c$, it would be bidirectional (since $a$ has no loop). But since the edges between $c$ and $t$ or $c$ and $p$ cannot be both bidirectional, we could reduce the present cycle to a strictly smaller one containing a unidirectional edge, a loop and a loopless vertex. So there is no edge between $a$ and $c$. Since $p=b$ there is none from $p$ to $c$. The only optional edge available is $(t,c)$ (in green).

We use the same method as before: take two horizontally successive columns $K_1$ and $K_2$.  In any macro-slice of $K_1$, there is a $c$ meso-slice slice that is above a $t$ meso-slice, itself above a $a$ meso-slice.

The $a$ meso-slice in $K_1$ cannot be horizontally followed by a border or a code meso-slice in $K_2$ because they contain $c$. The $c$ meso-slice in $K_1$ must not be followed horizontally by any symbol $p$ or $a$ in column $K_2$ -- so most of it (at least $N M / 2$) is in contact neither with a border nor with a code meso-slice, but with a meso-slice made of only one symbol. If that symbol is $c$, then the aforementioned $a$ meso-slice is in contact with a meso-slice made of a symbol $a$. This is impossible.
Hence the $c$ meso-slice of $K_1$ is mostly followed by a $t$ meso-slice of $K_2$, and from this we recover \cref{propapproxaligned}.

The three other cases are treated similarly, exploring with what each $C^1$ meso-slice can be in contact to ensure that \cref{propapproxaligned} is valid even without all of condition $C$. Checking the rest of the properties follows case 1.3.

\subsection{Proof of Theorem 4.1 for several strongly connected components}

The idea if $H$ has several SCCs is to build one, by products of SCCs, that is none of the three types that constitute condition $D$. We can then apply what we did in the previous subsections.

The direct product $S_1 \times S_2$ of two SCCs $S_1$ and $S_2$ is made of pairs $(s_1,s_2)$, where an edge exists between two pairs if and only if edges exist in both $S_1$ and $S_2$ between the corresponding vertices. It can be used in our construction by forcing pairs of elements $(s_1,s_2) \in S_1 \times S_2$ to be vertically one on top of the other.

Since $H$ does not verify condition $D$, it has a non-reflexive SCC $S_1$, a non-symmetric SCC $S_2$ and a non-state-split SCC $S_3$ (two of them being possibly the same). But then:

\begin{itemize}
	\item Since $S_1$ is non-reflexive, no SCC of $S_1 \times S_2 \times S_3$ is reflexive. Indeed, since $S_1$ is strongly connected, all vertices of $S_1$ are represented in any SCC $C$ of that graph product, meaning that for any $s_1 \in S_1$ there is at least one vertex of the form $(s_1,*,*)$ in $C$. But if $C$ had loop on all its vertices, then in particular $S_1$ would be reflexive.
	
	\item Similarly, since $S_2$ is non-symmetric, no SCC of $S_1 \times S_2 \times S_3$ is symmetric.
	
	\item Finally, since $S_3$ is non-state-split, no SCC of $S_1 \times S_2 \times S_3$ is a state-split cycle. Indeed, suppose $S$ is such a state-split SCC of the direct product. It can be written as a collection of classes $(V_i)_{i \in I}$ of elements from $S_1 \times S_2 \times S_3$ that we can project onto $S_3$, getting new classes $(W_i)_{i \in I}$, with some elements of $S_3$ that possibly appear in several of these.
	Let $c$ be any vertex in $S_3$ that appears at least twice \emph{with the least difference of indices between two classes where it appears}; say $c \in W_i$ and $c \in W_{i+k}$. Since $S$ is state-split, all elements in $W_{i+1}$ are exactly the elements of $S_3$ to which $c$ leads. But it is the same for $W_{i+k+1}$. Hence $W_{i+1} = W_{i+k+1}$. From this we deduce that $W_i=W_{i+k}$ for any $i$, using the fact that indices are modulo $|I|$. Since $k$ is the smallest possible distance between classes having a common element, classes from $(W_i)_{i \in \{0,\dots,k-1\}}$ are all disjoint; and they obviously contain all vertices from $S_3$. Now simply consider these classes $W_0$ to $W_{k-1}$: you get the proof that $S_3$ is state-split.
\end{itemize}


\section{Some consequences of classical problems on two-dimensional subshifts  under interplay between horizontal and vertical conditions}
\label{sec:consequences}

\subsection{Periodicity}

For $x \in X$ a two-dimensional subshift, we say that $x$ is \emph{periodic} (of period $\vec{v}$) if there exists $\vec{v} \in \Z^2 \setminus \{(0,0)\}$ such that $\forall (i, j) \in \Z^2, x_{(i,j)} = x_{(i,j)+\vec{v}}$. In a more general setting of SFTs on groups in general, this is called weak periodicity.

\begin{corollary}
	\label{coroaperiodic}
	Let $H$ be a one-dimensional nearest-neighbor SFT.
	
	\begin{center}
		$X_{H,V}$ is empty or contains a periodic configuration for all one-dimensional SFTs $V$
		
		$\Leftrightarrow$ $\mathcal{G}(H)$ verifies condition $D$.
	\end{center}
\end{corollary}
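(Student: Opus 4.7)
The plan handles the two implications separately.

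For the forward direction, the strategy is contraposition via Theorem~\ref{th:root}. Suppose $\mathcal{G}(H)$ fails condition~$D$; applying the theorem to a nonempty aperiodic two-dimensional SFT $Y\subset\B^{\Z^2}$ (for instance Berger's~\cite{dpberger}) produces a one-dimensional $V_Y$ with $X_{H,V_Y}$ an $(m,n)$th root of $Y$. Nonemptiness transfers from $Y$ to $X_{H,V_Y}$ through the homeomorphism $\varphi\colon Z\to Y$. Conversely, any periodic $x\in X_{H,V_Y}$ of period $\vec{v}\ne(0,0)$ yields a periodic element of $Y$: writing $x=\sigma^{(i_0,j_0)}(x_0)$ with $x_0\in Z$ and $(i_0,j_0)\in[0,m)\times[0,n)$ gives $\sigma^{\vec{v}}(x_0)=x_0\in Z$, and the disjoint decomposition $X=\bigsqcup_{0\le i<m,\,0\le j<n}\sigma^{(i,j)}(Z)$ forces $\vec{v}\in m\Z\times n\Z$, say $\vec{v}=(km,ln)$; equivariance of $\varphi$ then gives $\varphi(x_0)=\sigma^{(k,l)}(\varphi(x_0))$ with $(k,l)\ne(0,0)$, contradicting aperiodicity. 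Thus $V_Y$ is the required witness to the failure of the left-hand side.

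For the backward direction, assume condition~$D$, fix $V$, and pick $x\in X_{H,V}$. The plan is, first, to replace $x$ via compactness by a configuration whose rows are trapped in single SCCs, and then to periodize horizontally. Each row of $x$ is a biinfinite walk in the finite DAG of SCCs of $\mathcal{G}(H)$ and thus eventually lies in a single SCC $S^+_j$ as $i\to+\infty$; passing to a subsequential shift-limit $x^+=\lim_k\sigma^{(-n_k,0)}(x)\in X_{H,V}$ ensures $x^+_{i,j}\in S^+_j$ for every $(i,j)$. Condition~$D$ assigns each SCC a canonical period: $1$ for reflexive (constant rows are legal), $2$ for symmetric (alternating rows are legal), and the number of classes $p$ for state-split (cyclic rows through consecutive classes are legal). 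Let $P$ be the LCM of these periods over the finitely many SCCs occurring, and set $x'_{i,j}=x^+_{i\bmod P,j}$. Then $x'$ has period $(P,0)$, and each of its columns is literally a column of $x^+$, hence lies in $V$.

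What remains is to verify that each row of $x'$ is a valid walk in $\mathcal{G}(H)$, which reduces to checking the wrap-around edge $(x^+_{P-1,j},x^+_{0,j})\in\vec{E}$ for every $j$. For symmetric rows this is supplied by bidirectionality (with the degenerate case $x^+_{P-1,j}=x^+_{0,j}$ forcing the self-loop via the original edge of $x^+$), and for state-split rows it holds because $p_j\mid P$ places $x^+_{P-1,j}$ and $x^+_{0,j}$ in consecutive classes whose bipartite edge set is complete. The hard part will be the reflexive case, in which the SCC structure alone does not supply the wrap-around edge and a naive periodization of a non-constant reflexive row can break the horizontal rules. The plan is to refine $x^+$ by a secondary compactness step: each $S^+_j$ is finite, so two-column patterns of $x^+$ restricted to any fixed vertical window take only finitely many values, and pigeonhole with a diagonal extraction produces a further shift-limit $x^{++}\in X_{H,V}$ in which two distinct columns $c^{++}_0$ and $c^{++}_K$ coincide as full elements of $V$; periodizing $x^{++}$ with period $K$ then recovers every wrap-around edge as an edge genuinely present in $x^{++}$. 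Keeping the gap $K$ bounded across the diagonal extraction, where the full structural rigidity of condition~$D$ must be invoked, is the delicate step.
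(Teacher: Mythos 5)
Your forward direction is correct and essentially the paper's argument: contrapose, invoke \cref{th:root} on an aperiodic nonempty two-dimensional SFT, and transfer periodicity through the root homeomorphism using the disjoint decomposition $X=\bigsqcup\sigma^{(i,j)}(Z)$. (The paper gets around identifying $\vec{v}\in m\Z\times n\Z$ by just multiplying the period by $mn$, but your refinement is fine.)

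The backward direction is where you have gone astray, and the root cause is a misreading of condition~$D$. The condition demands that \emph{all} SCCs of $\mathcal{G}(H)$ share a \emph{single} common type among reflexive, symmetric, state-split; it does not allow one SCC to be only reflexive while another is only state-split, say. Once you fix that common type $T$ and use the corresponding period uniformly for every row of $x^+$ (namely $P=1$ if $T$ is reflexive, $P=2$ if symmetric, $P=\mathrm{LCM}$ of the class counts if state-split), your own primary argument closes: the wrap-around edge $(x^+_{P-1,j},x^+_{0,j})$ is always supplied by the type of $S^+_j$. In particular, the reflexive case is the \emph{easiest}, not the hardest --- $P=1$ forces each row of $x'$ to be constant, and the self-loop on $x^+_{0,j}$ is exactly what reflexivity guarantees, so ``non-constant reflexive rows'' never arise after periodization. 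Consequently your entire final paragraph --- the secondary compactness/pigeonhole step aiming to make two biinfinite columns coincide --- is both unnecessary and, as you yourself flag, genuinely incomplete: a diagonal extraction over growing vertical windows gives pairs of columns that agree on larger and larger windows, but the gap $K$ between them can escape to infinity, and nothing in condition~$D$ obviously bounds it. That step should simply be deleted.

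For comparison, the paper's proof of this implication works in the opposite order: it extracts a finite rectangle of bounded height $M(|\A|^{\cdot}+1)$ from a configuration in $X_{H,V}$, pigeonholes \emph{vertically} inside it to obtain a vertically periodic patch, and then uses the type of the SCCs to juxtapose that patch with itself \emph{horizontally} (one column for reflexive, two for symmetric, $L$ for state-split). Your (corrected) approach sidesteps the finite pigeonhole entirely by first stabilizing the rows via a shift-limit and then periodizing horizontally only, accepting a configuration that need not be vertically periodic --- which is fine since the statement only asks for some nonzero period. Both work; the paper's is explicitly constructive with concrete size bounds (which it needs elsewhere for \cref{th:DP}), while yours is shorter once the misreading of condition~$D$ is repaired and the last paragraph removed.
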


\begin{proof}
	If $\mathcal{G}(H)$ verifies condition $D$, then, as is detailed in the proof of \cref{th:DP}, whatever may be the chosen $V$, we can find a patch $P$ that respects the local rules of $X_{H,V}$ and tiles the plane periodically. Hence $X_{H,V}$ admits a periodic configuration.
	
	If $\mathcal{G}(H)$ does not verify condition $D$, then, using \cref{th:root}, we know that for any two-dimensional SFT $Y$ with no periodic configuration, there exists some one-dimensional SFT $V_Y$ such that $X_{H,V_Y}$ is a $(m,n)$th root of $Y$.
	
	We consider $Y$ a two-dimensional SFT with no periodic configuration (see \cite{dprobinson} for instance). Then, naming $V_Y$ the corresponding one-dimensional SFT from \cref{th:root}, we know that there exists $\psi\colon Y \hookrightarrow X_{H,V}$ continuous with $\sqcup_{0 \leq i < m, 0 \leq j < n} \sigma^{(i,j)}(\psi(Y)) = X_{H,V}$ for some integers $m$ and $n$. Note that we use $\psi$ the inverse map of $\phi$ in the definition of a $(m,n)$th root, the reasoning being easier with it.
	
	If $X_{H,V}$ contained a $\sigma$-periodic configuration, then $\sqcup_{0 \leq i < m, 0 \leq j < n} \sigma^{(i,j)}(\psi(Y))$ would, and so $\psi(Y)$ would too (since configurations in $\sqcup_{0 \leq i < m, 0 \leq j < n} \sigma^{(i,j)}(\psi(Y))$ are merely translates of the ones in $\psi(Y)$).
	
	Call $\psi(y)$ such a periodic configuration, with $y \in Y$. There exists some $\vec{v} = (a,b) \in \Z^2$ such that $\sigma^{\vec{v}}(\psi(y)) = \psi(y)$. But consequently, $\sigma^{mn\vec{v}}(\psi(y)) = \psi(y)$. Then $\sigma^{(anm,bmn)}(\psi(y)) = \psi(y)$. Using \cref{th:root} again, we obtain that $\psi(\sigma^{(an,bm)}(y)) = \psi(y)$. $\psi$ being bijective, we finally get:
	\[
	\sigma^{(an,bm)}(y) = y.
	\]
	We found a periodic configuration in $Y$. This being impossible, we conclude that $X_{H,V}$ contains no periodic configuration.
\end{proof}

\subsection{The Domino Problem}
\label{DominoProblemCombined}

We consider now the domino problem when horizontal and vertical constraints interplay.  We want to understand when two one-dimensional SFTs are compatible to build a two-dimensional SFT, and by extension where the frontier between decidability (one-dimensional) and undecidability (two-dimensional) yields. This question is notably reflected by the following adapted version of the Domino Problem:

\begin{definition}
	Let $H \subset \A^\Z$ be a SFT. The \emph{Domino Problem depending on $H$} is the language
	\[DP_I(H):=\{<V> \mid V \subset \A^\Z \text{ is an SFT and } X_{H,V} \neq \emptyset\}.\]
\end{definition}

\begin{remark}
	Just as for $DP_h(H)$ from \cref{subsec:horizontalconstraints}, this problem is always defined for a given $H$.
\end{remark}

From this definition and \cref{sec:simulation}, we deduce:

\begin{theorem}
	\label{th:DP}
	Let $H$ be a nearest-neighbor one-dimensional SFT.
	
	\begin{center}
		$DP_I(H)$ is decidable $\Leftrightarrow$ $\mathcal{G}(H)$ verifies condition $D$.
	\end{center}
\end{theorem}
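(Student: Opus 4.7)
The theorem has two directions.

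\textbf{($\Rightarrow$) Undecidability under failure of condition $D$.} Suppose $\mathcal{G}(H)$ fails condition $D$. Theorem~\ref{th:root} yields a computable map $\mathcal{F} \mapsto V_{X_\mathcal{F}}$ from finite 2D forbidden pattern sets to 1D SFTs such that $X_{H, V_{X_\mathcal{F}}}$ is an $(m,n)$-th root of $X_\mathcal{F}$, hence $X_{H, V_{X_\mathcal{F}}} = \emptyset \iff X_\mathcal{F} = \emptyset$. This is a many-one reduction of the undecidable $DP(\Z^2)$ to $DP_I(H)$, so $DP_I(H)$ is undecidable.

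\textbf{($\Leftarrow$) Decidability under condition $D$.} Assume $\mathcal{G}(H)$ satisfies condition $D$. I describe an algorithm that, on input $V$, decides whether $X_{H,V} = \emptyset$ by reducing to finitely many 1D SFT emptiness problems (each decidable since $DP(\Z)$ is). After a standard higher-block recoding I may assume $V$ is also nearest-neighbor. Condition $D$ tightly constrains each row of any $x \in X_{H,V}$: since every biinfinite walk in the finite graph $\mathcal{G}(H)$ eventually enters some SCC on each side, horizontally shifting $x$ and extracting a subsequential limit produces a configuration in $X_{H,V}$ whose rows each lie entirely in a single SCC of $\mathcal{G}(H)$.

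For each SCC $S$ of $\mathcal{G}(H)$ equipped with a type from condition $D$, the nonemptiness of $X_{H|_S, V}$ reduces to a 1D SFT problem:
\begin{itemize}
\item \emph{Reflexive type:} $X_{H|_S, V}\ne\emptyset$ iff $V \cap S^\Z\ne\emptyset$, since for any $y\in V\cap S^\Z$ the constant-row configuration $x_{i,j}:=y_j$ is valid (every vertex of $S$ carrying a self-loop).
\item \emph{Symmetric type:} reduce to the 1D SFT on $\A\times\A$ consisting of sequences $((a_j,b_j))_j$ with $(a_j,b_j)\in \vec{E}(\mathcal{G}(H|_S))$ and $(a_j)_j,(b_j)_j\in V$; horizontally alternating $a_j,b_j$ yields a valid configuration since symmetry provides the reverse edge $(b_j,a_j)$.
\item \emph{State-split cycle type:} with $S=\bigsqcup_{k=0}^{p-1}V_k$, encode $p$ consecutive columns as a single symbol in $W=\{(v_0,\ldots,v_{p-1})\in S^p:\exists c,\ v_k\in V_{(k+c)\bmod p}\text{ for all }k\}$, with coordinatewise vertical constraints inherited from $V$; the resulting 1D SFT on $W$ is nonempty iff $X_{H|_S, V}$ is, by periodically extending a valid $W$-sequence in the horizontal direction.
\end{itemize}
Iterating over the (finitely many) SCCs and types yields the algorithm.

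The main obstacle is that the shift-and-limit argument does not by itself force all rows of the limit configuration into the same SCC: different rows may belong to different SCCs, linked vertically through $V$. I would address this by working with the auxiliary finite graph $G_V$ whose vertices are the SCCs of $\mathcal{G}(H)$ and whose edges are those pairs $(S,S')$ for which some $(v,v')\in S\times S'$ lies in $\mathcal{L}_V(2)$. A further compactness argument inside the finite $G_V$ reduces to checking finitely many periodic SCC-patterns; along each such pattern the per-SCC reductions above combine into a single decidable 1D SFT emptiness problem, and $DP_I(H)$ is decidable as announced.
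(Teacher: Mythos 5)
Your forward direction coincides with the paper's: use \cref{th:root} to compute, from an input Wang tile set, a vertical SFT $V_W$ with $X_{H,V_W}$ a root of $W$, and thereby reduce $DP(\Z^2)$ to $DP_I(H)$. Your backward direction, however, follows a genuinely different route. The paper searches for a single rectangle of explicitly bounded size — width $1$, $2$, or $L=\mathrm{LCM}$ of the state-split periods according to the type, height $M(|\A|^{\,\cdot\,M}+1)$ — argues by pigeonhole that such a rectangle forces a periodic tiling, and argues by compactness that nonemptiness of $X_{H,V}$ forces such a (transient-free) rectangle to exist. You instead reduce to $DP(\Z)$, encoding a bounded number of columns as a single symbol; this is conceptually cleaner, and your per-SCC reductions (constant rows for reflexive, $2$-column alternation for symmetric, $p$-column blocks for state-split) are correct as statements about $X_{H|_S,V}$.

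The genuine gap is the one you flag yourself: a configuration of $X_{H,V}$ need not keep all rows in one SCC, so emptiness of $X_{H,V}$ does not reduce to emptiness of the individual $X_{H|_S,V}$'s, and your final paragraph (the graph $G_V$ plus ``a further compactness argument'' reducing to ``finitely many periodic SCC-patterns'') is not carried out and is not obviously the right fix. A cleaner repair, staying entirely within your framework: build a single $1$D SFT on width-$w$ tuples $(v_0,\dots,v_{w-1})\in\A^w$ (with $w=1$, $2$, or $p=\mathrm{LCM}$ of the periods, according to the common type), whose alphabet requires each tuple to consist of non-transient vertices lying in one SCC and to be a valid horizontal $H$-word that closes up cyclically within that SCC, and whose adjacency rule requires each coordinate-column to satisfy $V$. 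Keeping all rows in the same SCC is then a \emph{local} condition on symbols rather than a global one, so different rows may land in different SCCs freely. Your shift-and-limit argument does supply a configuration with each row inside a single SCC, whose columns project onto a point of this $1$D SFT; conversely any point of the $1$D SFT extends horizontally periodically since each row closes up within its own SCC. With that repair your approach goes through, but as written the combination step is missing.
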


\begin{proof}
	Proof of $\Leftarrow$: assume $\mathcal{G}(H)$ verifies condition $D$. Then its SCCs share a common type, be it reflexive, symmetric, or state-split cycle. For each of these three cases, we produce an algorithm that takes as input a one-dimensional SFT $V \subset \A^\Z$, and that returns \texttt{YES} if $X_{H,V}$ is nonempty, and \texttt{NO} otherwise.
	
	Let $M$ be the maximal size of forbidden patterns in $\mathcal{F}_V$ (since $V$ is an SFT, such an integer exists).
	\begin{itemize}			
		\item If $\mathcal{G}(H)$ has state-split cycle type SCCs: let L be the LCM of the number of $V_i$s in each component.
		If there is no rectangle of size $L \times M(|\A|^{LM}+1)$ respecting local rules of $X_{H,V}$ and containing no transient element, then answer \texttt{NO}. Indeed, any configuration in $X_{H,V}$ contains valid rectangles as large as we want that do not contain transient elements.
		If there is such a rectangle $R$, then by the pigeonhole principle it contains at least twice the same rectangle $R^\prime$ of size $L \times M$. To simplify the writing, we assume that the rectangle that repeats is the one of coordinates $[1,L] \times [1,M]$ inside $R$ where $[1,L]$ and $[1,M]$ are intervals of integers, and that it can be found again with coordinates $[1,L] \times [k,k+M-1]$. Else, we simply truncate a part of $R$ so that it becomes true.
		
		Define $P := R|_{[1,L] \times [1,k+M-1]}$.
		Since $V$ has forbidden patterns of size at most $M$, and since $R$ respects our local rules, $P$ can be vertically juxtaposed with itself (overlapping on $R^\prime$).
		
		$P$ can also be horizontally juxtaposed with itself (without overlap). Indeed, one line of $P$ uses only elements of one SCC of $H$ (since elements of two different SCCs cannot be juxtaposed horizontally, and we banned transient elements). Since $L$ is a multiple of the length of all cycle classes, the first element in a given line can follow the last element in the same line. Hence all lines of $P$ can be juxtaposed with themselves.
		
		As a conclusion, $P$ is a valid patch that can tile $\Z^2$ periodically. Therefore, $X_{H,V}$ is nonempty; return \texttt{YES}.
		
		\item If $\mathcal{G}(H)$ has symmetric type SCCs the construction is similar, but this time build a rectangle $R$ of size $2 \times M(|\A|^{2M}+1)$. Either we cannot find one and return \texttt{NO}; or we can find one and from it extract a patch that tiles the plane periodically and return \texttt{YES}.
		
		\item Finally, if $\mathcal{G}(H)$ has reflexive type SCCs, the construction is even simpler than before. Build a rectangle $R$ of size $1 \times M(|\A|^{M}+1)$; the rest of the reasoning is identical.
	\end{itemize}
	Proof of $\Rightarrow$ is due to \cref{th:root}, and is done by contraposition. If $\mathcal{G}(H)$ does not verify condition $D$, then for any Wang shift $W$ we can algorithmically build some one-dimensional SFT $V_W$ such that $X_{H,V_W}$ is a root of $W$, see \cref{th:root}. If we were able to solve $DP_I(H)$, then there would exist a Turing Machine $\mathcal{M}$ able to tell us if $X_{H,V}$ is empty for any one-dimensional SFT $V$. But as a consequence we could build a Turing Machine $\mathcal{N}$ taking as input any Wang shift $W$, and building the corresponding $V_W$ following \cref{sec:simulation}. Then, by running $\mathcal{M}$, $\mathcal{N}$ would be able to tell us if $X_{H,V_W}$ is empty or not. Then it could answer if $W$ is empty or not; but determining the emptiness or nonemptiness of every Wang shift is equivalent to $DP(\Z^2)$ being decidable, which is false. Hence, since $DP(\Z^2)$ is undecidable, $DP_I(H)$ is too.
\end{proof}

\begin{remark}
	A pair of conjugate SFTs $H_1$ and $H_2$ may yield different results, with $DP_{I}(H_1)$ decidable but $DP_{I}(H_2)$ undecidable. Consider for instance the following Rauzy graphs and applications on finite words (extensible to biinfinite words):
	
	\begin{minipage}{0.55\textwidth}
		\begin{tikzpicture}
		\begin{scope}[xshift=-4cm]
		\node[draw, circle] (a) {$a$};
		\node[draw, circle] (b) [right = 0.5cm of a] {$b$};
		
		\draw[->, >=latex] (b) to[bend left=20] (a);
		\draw[->, >=latex] (a) to[bend left=20] (b);
		
		\draw[->, >=latex] (b) to [loop right] (b);
		\draw[->, >=latex] (a) to [loop left] (a);
		\end{scope}
		
		\begin{scope}
		\node[draw, circle] (b) {$\beta$};
		\node[draw, circle] (a) [below left = 0.5cm and 0.25cm of b] {$\alpha$};
		\node[draw, circle] (c) [below right = 0.5cm and 0.25cm of b] {$\gamma$};
		
		\draw[->, >=latex] (a) to[bend right=20] (b);
		
		\draw[->, >=latex] (c) to[bend right=20] (b);
		\draw[->, >=latex] (b) to[bend right=20] (a);
		\draw[->, >=latex] (a) to[bend right=20] (c);
		
		\draw[->, >=latex] (b) to[loop above] (b);
		\draw[->, >=latex] (c) to [out=330,in=300,looseness=8] (c);
		\end{scope}
		\end{tikzpicture}
	\end{minipage}
	\begin{minipage}{0.35\textwidth}
		$\phi:
		\begin{cases}
		aa\mapsto \gamma \\
		ab\mapsto \beta \\
		ba\mapsto \alpha \\
		bb\mapsto \beta \\
		\end{cases}
		\psi:
		\begin{cases}
		\alpha\mapsto a \\
		\beta\mapsto b \\
		\gamma\mapsto a \\
		\end{cases}$
	\end{minipage}
	
	These graphs describe conjugate SFTs through these applications. However, the first graph has decidable $DP_I(H)$ and the second has not, by use of \cref{th:DP}.
\end{remark}

\section{Impact of the interplay between horizontal and vertical conditions on the algorithmic complexity of the entropy}
\label{EntropyCombined}

\subsection{Horizontal constraints without condition D}
\label{nocondD}

From the construction in \cref{sec:simulation} we deduce the following:

\begin{proposition}
	\label{prop:XHV}
	Let $H$ be a one-dimensional nearest-neighbor SFT that does not satisfy condition D. Then there exists a one-dimensional SFT $V$  such that $h(X_{H,V})$ is not computable. \end{proposition}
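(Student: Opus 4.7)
The plan is to combine Theorem~\ref{th:root} (the simulation theorem) with Proposition~\ref{rootentropy} (entropy behaves like $1/mn$ under roots) and the Hochman--Meyerovitch characterization of $2$-dimensional SFT entropies.

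First, I would invoke the Hochman--Meyerovitch theorem \cite{HM} to pick a two-dimensional SFT $Y\subset\B^{\Z^2}$ whose entropy $h(Y)$ is a $\Pi_1$-computable real that is \emph{not} computable. Such $Y$ exists because the set of $\Pi_1$-computable reals strictly contains the set of computable reals: for instance, one can take $h(Y)$ to encode the halting set of a universal Turing machine, giving a right-recursively enumerable but non-recursive real in $[0,1]$, and then apply the realization construction of \cite{HM} to produce a $2$-dimensional SFT with that entropy.

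Next, since $H$ is a nearest-neighbor SFT whose Rauzy graph does not satisfy condition D, Theorem~\ref{th:root} applies to $Y$: there exist integers $m,n\in\N$ and a one-dimensional SFT $V=V_Y\subset\A^\Z$ such that $X_{H,V}$ is a $(m,n)$th root of $Y$. Then Proposition~\ref{rootentropy} yields the exact relation
\[
h(Y) \;=\; mn\cdot h(X_{H,V}).
\]

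Finally, I would argue that $h(X_{H,V})$ cannot be computable. Indeed, the integers $m$ and $n$ produced by the construction are explicit (and in any case, they are positive integers, hence trivially computable), so if $h(X_{H,V})$ were computable we could multiply by the computable rational $mn$ to obtain a computable approximation of $h(Y)$, contradicting the choice of $Y$. Hence $h(X_{H,V})$ is not computable, as required.

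The only nontrivial ingredient is the existence of a $\Pi_1$-computable non-computable real in $[0,h(H)]$, but since $H$ does not satisfy condition D it is in particular not a plain cycle and has positive entropy, so $[0,h(H)]$ is a nondegenerate interval; rescaling the Hochman--Meyerovitch construction (or choosing $Y$ with $h(Y)$ small enough in the first place) ensures the target entropy lies in the feasible range. No further obstacle arises: the whole statement is a direct corollary of the simulation machinery built in Sections~\ref{sec:simulation}.
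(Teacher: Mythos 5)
Your proposal is correct and follows essentially the same route as the paper: invoke Theorem~\ref{th:root} to realize a two-dimensional SFT of non-computable ($\Pi_1$-computable) entropy as a root of some $X_{H,V}$, then apply Proposition~\ref{rootentropy} to transfer non-computability of the entropy through the $(m,n)$th-root relation. The paper phrases this with a Wang shift $W$ and makes the constant explicit ($KM^2N$), but this is just the specific $mn$ from the generic construction; your final remark about fitting the target entropy into $[0,h(H)]$ is unnecessary since the root construction automatically scales down by $mn$.
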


\begin{proof}
Let $W$ be a Wang shift with a non-computable entropy.  By using \cref{th:root} and \cref{rootentropy}, there exists  a one-dimensional SFT $V$ such that $h(W)= KM^2N h(X_{H,V})$ with $N$ the number of elements in the alphabet of $W$, $K$ and $M$ being defined as in \cref{subsec:core} which depend only of $H$. Therefore, $h(X_{H,V})$ is not computable.
\end{proof}

\begin{remark}
	For a given $H$, the $V$-dependent entropies in general are still not characterized and it seems difficult since in the construction of the root, the entropy decreases if the number of Wang tiles increases. In fact the previous proof allows to expect that in the case where the condition D is not satisfied, there exists a constant $C_H$ such that for every $\Pi_1$-computable number $h$ smaller than $C_H$ there exists a vertical one-dimensional subshift $V$ that allows $h(X_{H,V})=h$. However, we do not obtain this result exactly since if the Kolmogorov complexity of $h$ is important, then the cardinal of the alphabet of the Wang subshift that has $h$ as entropy, named $N$ in the previous proof, is also important.
	
	Nevertheless, in the case where there are cycles in the Rauzy graph which defines $H$ that do not appear in the coding of the Wang subshift used in \cref{th:root}, we can encode $h$ in a given part of $X_{H,V}$, diluted by the necessarily large macro-slices, but then add a noisy zone where these cycles can be used to increase the entropy, as it is done in the proof of	 \cref{th:RealizationEntropy}.
\end{remark}

\subsection{Condition D, computable entropy}
\label{condDcomputable}

A one-dimensional SFT $H$ that verifies condition $D$ can yield computable entropies. Indeed, one has the immediate result that allows only a small range of available entropies:

\begin{proposition}
	The entropies $h(X_{\A^\Z,V})$ accessible for $V \subset \A^\Z$ SFT are all the entropies accessible for one-dimensional SFTs with alphabet $\A$. These are included in the values $\log_2(\lambda) \leq \log_2(|\A|)$, where $\lambda$ is a Perron number. Notably, they all are computable.
\end{proposition}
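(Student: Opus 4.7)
The plan is to exploit the fact that when $H = \A^\Z$, the horizontal constraint is vacuous, so the combined subshift $X_{\A^\Z,V}$ consists of configurations whose columns are independent elements of $V$. First I would verify this directly from the definition in \cref{subsec:horizontalconstraints}: $X_{\A^\Z,V} = \{x \in \A^{\Z^2} \mid \forall i\in\Z,\ (x_{i,\ell})_{\ell\in\Z} \in V\}$, with no restriction on rows. In particular, any $n \times n$ admissible pattern is exactly a free juxtaposition of $n$ columns in $\mathcal{L}_V(n)$, so
\[
N_{X_{\A^\Z,V}}(n,n) = N_V(n)^n.
\]
Taking logarithms and dividing by $n^2$ gives
\[
h(X_{\A^\Z,V}) = \lim_{n\to\infty} \frac{\log_2 N_V(n)}{n} = h(V).
\]

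Conversely, every one-dimensional SFT $V \subseteq \A^\Z$ may serve as the vertical constraint, so
\[
\{h(X_{\A^\Z,V}) : V \subseteq \A^\Z \text{ SFT}\} = \{h(V) : V \subseteq \A^\Z \text{ SFT}\}.
\]
This establishes the first sentence of the proposition.

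To finish, I would invoke the classical characterization (see \cite{LindMarcus}) of entropies of one-dimensional SFTs: up to passing to a higher-block presentation, $V$ is described by a finite directed graph and its entropy is $\log_2(\lambda)$, where $\lambda$ is the spectral radius of the associated $0/1$-transition matrix. By the Perron--Frobenius theorem applied to each strongly connected component, $\lambda$ is a Perron number. The upper bound $\log_2(\lambda) \leq \log_2(|\A|)$ follows from $N_V(n) \leq |\A|^n$. Finally, Perron numbers are algebraic integers, hence computable (their minimal polynomial is computable from the transition matrix, and the dominant real root can be approximated to arbitrary precision), which yields computability of $h(X_{\A^\Z,V})$.

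There is no substantial obstacle here; the only tiny subtlety is to double-check the column-independence interpretation of $X_{\A^\Z,V}$, which is immediate from the definition since no forbidden pattern constrains horizontal adjacencies when $H = \A^\Z$.
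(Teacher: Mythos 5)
Your proposal is correct and follows essentially the same approach as the paper: the key identity $N_{X_{\A^\Z,V}}(n,n) = N_V(n)^n$ coming from horizontal independence of columns, followed by the Lind--Marcus characterization of one-dimensional SFT entropies as $\log_2(\lambda)$ for $\lambda$ a Perron number. Your write-up simply spells out a few more details (the converse direction, the bound from $N_V(n)\leq|\A|^n$, and computability of Perron numbers) that the paper leaves implicit.
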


\begin{proof}
	We use the fact that $N_{X_{\A^\Z,V}}(n,n) = N_V(n)^n$ since any two $n$-long columns can be juxtaposed horizontally here.
	We conclude using \cite{LindMarcus} that states that the available entropies for one-dimensional SFTs are the $\log_2(\lambda)$ where $\lambda$ is a Perron number.
\end{proof}

\begin{remark}
	An open question remains: what exactly are these accessible $\log_2(\lambda)$ obtained for a fixed size of alphabet?
	\cref{th:bonus} -- and \cite{HM} -- gives an answer for dimension $2$, but this exact question is, to our knowledge, not answered in dimension $1$.
\end{remark}

A similar result holds for a larger class of graphs, one of the possibilities for respecting condition D:

\begin{proposition}
	Let $H \subset \A^\Z$ be a nearest-neighbor SFT whose Rauzy graph is so that each of its SCCs is a state-split cycle. Then for all $V \subset \A^\Z$ SFT, $h(X_{H,V})$ is computable.
\end{proposition}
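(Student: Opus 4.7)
The plan is to reduce the computation to a 1D computable quantity by exploiting the highly rigid horizontal structure imposed by the state-split cycle.

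First I would reduce to the case of a single SCC. Since every bi-infinite word in $H$ eventually lives in a single SCC of $\mathcal{G}(H)$ (transient vertices are visited only finitely often), and inter-SCC edges contribute only a sub-exponential number of configurations (they can only be used finitely often per row), one shows $h(X_{H,\mathcal{F}})=\max_{S}h(X_{H_S,\mathcal{F}})$ where $S$ ranges over SCCs and $H_S$ is the sub-SFT supported on $S$. So assume $\mathcal{G}(H)$ is a single state-split cycle SCC of period $p$, inducing a partition $\mathcal{A}=V_0\sqcup\cdots\sqcup V_{p-1}$ and a class projection $\pi_V:V\to(\mathbb{Z}/p\mathbb{Z})^\mathbb{Z}$.

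Next I would compute the strip entropy $h(X_{H,V}^{(n)})$, where $X_{H,V}^{(n)}$ is the 1D SFT whose alphabet is $\mathcal{L}_V(n)$ and whose transitions respect the state-split constraint: a height-$n$ column $c$ is followed by $c'$ iff $\pi_V(c')=\pi_V(c)+1$ pointwise (and within this, every pair is allowed, by the state-split cycle structure). Letting $f_n(s)=\#\{w\in\mathcal{L}_V(n):\pi_V(w)=s\}$ for $s\in(\mathbb{Z}/p\mathbb{Z})^n$, the transfer matrix of $X_{H,V}^{(n)}$ is block-cyclic: blocks are indexed by class sequences, the block $s\to s+1$ is an $f_n(s)\times f_n(s+1)$ all-ones matrix, and all other entries vanish. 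Taking the $p$-th power produces a block-diagonal matrix whose $s$-block has Perron eigenvalue $\prod_{j=0}^{p-1}f_n(s+j)$, giving
\[
h(X_{H,V}^{(n)})=\frac{1}{p}\log\max_{s\in(\mathbb{Z}/p\mathbb{Z})^n}\prod_{j=0}^{p-1}f_n(s+j).
\]

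Finally I would identify $h(X_{H,V})=\lim_n h(X_{H,V}^{(n)})/n$ with a maximum over a finite collection of Perron eigenvalues. The class projection $\pi_V(V)$ is a sofic subshift of $(\mathbb{Z}/p\mathbb{Z})^\mathbb{Z}$ with a finite Rauzy graph. For each primitive cycle $\gamma=(c_0,\dots,c_{L-1})$ in that graph, let $\rho(\gamma)$ be the Perron eigenvalue of $M_{c_0c_1}M_{c_1c_2}\cdots M_{c_{L-1}c_0}$, where $M_{k,k'}$ is the submatrix of $V$'s transfer matrix between $V_k$ and $V_{k'}$. Then along a period-$L$ class sequence $s=\gamma^\infty$ one has $f_n(s_{[0,n-1]})\sim C_\gamma\,\rho(\gamma)^{n/L}$, so that
\[
h(X_{H,V})=\frac{1}{p}\max_{\gamma}\frac{1}{L}\log\prod_{j=0}^{p-1}\rho(\gamma+j),
\]
where $\gamma+j$ is the cycle obtained by shifting each label by $j\bmod p$. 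The maximum is over finitely many primitive cycles in a finite directed graph, so the quantity is effectively computable.

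The main obstacle is proving the upper bound in the last step: that no non-periodic class sequence can yield a strictly larger growth rate than the best periodic one. This is the heart of the argument and amounts to showing that the maximal fiber growth rate for the one-block factor $\pi_V$ of the SFT $V$ is attained on periodic orbits of $\pi_V(V)$. The standard way to achieve this is via Perron–Frobenius for products of non-negative matrices combined with a compactness/approximation argument: any sequence realizing the supremum can be approximated by periodic sequences, and the max-cycle-mean structure ensures the sup is achieved by a primitive cycle. The rest of the work (the strip-entropy block-cyclic computation, the reduction to one SCC, and the fact that finitely many Perron eigenvalues of integer matrices are computable) is essentially bookkeeping.
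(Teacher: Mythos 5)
Your first two steps arrive, in different notation, at the same identity the paper derives: because the state-split cycle makes horizontally adjacent columns independent once their class projections are fixed, the $pm\times n$ block count factorizes exactly, giving
\[
h(X_{H,V})=\lim_{n\to\infty}\frac{1}{pn}\log_2\max_{s}\prod_{j=0}^{p-1}f_n(s+j),
\]
where your $f_n(s)$ is the paper's $N_s$. The divergence, and the gap, is in what you do with this limit. The paper stops here: each term of the sequence is computable from $V$, the sequence converges to $h(X_{H,V})$, and pairing this computable approximation with the Hochman--Meyerovitch right-r.e.\ bound gives computability with no closed form at all. You instead try to identify the limit with $\frac{1}{p}\max_\gamma\frac{1}{L}\log\prod_j\rho(\gamma+j)$, a max of Perron eigenvalues of cycle matrix products, and you acknowledge that the needed upper bound --- that no non-periodic class sequence beats the best periodic cycle --- is ``the heart of the argument.'' This is not bookkeeping; it is a finiteness property for a joint-spectral-radius-type optimization over constrained products of non-commuting nonnegative matrices, and the generic finiteness property is known to be false (periodic products need not attain the supremal growth rate). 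Your appeal to ``max-cycle-mean structure'' does not apply: cycle-mean optimality is a statement about scalar additive edge weights, whereas here the ``weight'' of a path is the norm of a matrix product, which only submultiplies along concatenations. Perron--Frobenius for a single nonnegative matrix does not save it either, since $M_{k,k'}$ varies along the path. It may be that your specific constrained nonnegative-integer setting does satisfy a finiteness property, but that would require a genuine proof; as written, you have traded the proposition's modest goal (computability) for a much stronger unestablished claim (an explicit Perron formula), and the $\Pi_1\cap\Sigma_1$ route you already set up would have finished the job.

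A secondary issue: your reduction to a single SCC via $h(X_{H,V})=\max_S h(X_{H_S,V})$ is under-justified. The remark about transient vertices and inter-SCC edges addresses only a single row; different rows of a configuration of $X_{H,V}$ can lie in different SCCs, and $V$ couples them vertically, so the count of mixed-SCC rectangles is not obviously dominated by the best single SCC without further argument. The paper avoids this by enriching the projection $\phi$ to record both the SCC index and the class within it (with $p$ the LCM of the periods), which keeps the exact factorization of the block count valid without a separate reduction. Either prove your reduction or adopt the enriched projection.
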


\begin{proof}
	Consider that $\mathcal{G}(H) = \bigsqcup_{i=0}^{p-1} U_i$ is a state-split cycle of length $p > 0$ -- the proof for a graph made of several state-split cycles is similar and briefly mentioned at the end. We prove that for any $V \subseteq \A^\Z$, $h(X_{H,V})$ is computable. Let $V$ be such a SFT.
	
	Let $\phi\colon \A^\Z \rightarrow \{0,\dots,p-1\}^\Z$ be the factor that maps any symbol in the component $U_i$, on $i$. We also call $\phi$ its restricted version to $\A^*$.
	
	For $n \in \N$, $j \in \{0,\dots,p-1\}$ and $u \in \{0,\dots,p-1\}^n$, let $u^j = u + (j,\dots,j)$ with the addition made modulo $p$. We also define $S_u = \{ w \in \mathcal{L}_V(n) \mid \phi(w)=u \}$ and $N_u = |S_u|$.
	
	We have the following, for integers $m$ and $n$:
	
	\[
	N_{X_{H,V}} (pm,n) = \sum_{u \in \{0,\dots,p-1\}^n} \left( \Pi_{j=0}^{p-1} N_{u^j} \right)^m.
	\]
	
	Indeed, a rectangle of size $pm \times n$ in $X_{H,V}$ is given by a vertical word $u$ in $\{0,\dots,p-1\}^n$ that fixes for the whole rectangle where elements of each $U_i$ will be. Then column $j$ can be made of any succession of $n$ symbols that respects $u^j$ -- that are in the correct $U_i$'s.
	
	Therefore, we have:
	\begin{align*}
		h(X_{H,V}) & = \lim\limits_{n \to +\infty} \dfrac{\log_2\left(N_{X_{H,V}}(pn,n)\right)}{pn^2}\\
		& = \lim\limits_{n \to +\infty} \dfrac{\log_2\left(\sum_{u \in \{0,\dots,p-1\}^n} \left( \Pi_{j=0}^{p-1} N_{u^j} \right)^n\right)}{pn^2}\\
		& = \lim\limits_{n \to +\infty} \dfrac{\log_2\left( \left(\max\limits_{v \in \{0,\dots,p-1\}^n} \Pi_{j=0}^{p-1} N_{v^j}\right)^n \sum_{u \in \{0,\dots,p-1\}^n} \dfrac{\left( \Pi_{j=0}^{p-1} N_{u^j} \right)^n}{\left(\max\limits_{v \in \{0,\dots,p-1\}^n} \Pi_{j=0}^{p-1} N_{v^j}\right)^n} \right)}{pn^2}\\
		& = \lim\limits_{n \to +\infty} \dfrac{\log_2\left( \left(\max\limits_{v \in \{0,\dots,p-1\}^n} \Pi_{j=0}^{p-1} N_{v^j}\right)^n \right)}{pn^2} + \lim\limits_{n \to +\infty} \dfrac{\log_2\left( \sum_{u \in \{0,\dots,p-1\}^n} \dfrac{\left( \Pi_{j=0}^{p-1} N_{u^j} \right)^n}{\left(\max\limits_{v \in \{0,\dots,p-1\}^n} \Pi_{j=0}^{p-1} N_{v^j}\right)^n} \right)}{pn^2}\\
		& = \lim\limits_{n \to +\infty} \dfrac{\log_2\left( \max\limits_{v \in \{0,\dots,p-1\}^n} \Pi_{j=0}^{p-1} N_{v^j} \right)}{pn}
	\end{align*}
	since in the penultimate line the second term can be bounded by $0$ from below (at least one $u \in \{0,\dots,p-1\}^n$ in the index of the sum reaches the maximum of the denominator, so the sum is at least $1$), and by $\frac{\log_2(p^n)}{pn^2}$, which tends to $0$.
	
	With the previous computation, it is also clear that for all $n \in \N$,
	
	\[
	\dfrac{\log_2\left(N_{X_{H,V}}(pn,n)\right)}{pn^2} \geq \dfrac{\log_2\left( \max\limits_{v \in \{0,\dots,p-1\}^n} \Pi_{j=0}^{p-1} N_{v^j} \right)}{pn}
	\]
	
	hence
	
	\[
	h(X_{H,V}) \geq \lim\limits_{n \to +\infty} \dfrac{\log_2\left( \max\limits_{v \in \{0,\dots,p-1\}^n} \Pi_{j=0}^{p-1} N_{v^j} \right)}{pn}
	\]
	
	From this we can deduce that the sequence $\dfrac{\log_2\left( \max\limits_{v \in \{0,\dots,p-1\}^n} \Pi_{j=0}^{p-1} N_{v^j} \right)}{pn}$ actually converges to $h(X_{H,V})$ from below. There is a Turing Machine, constructed algorithmically with $V$, that computes any term of it. Indeed, for any $v \in \{0,\dots,p-1\}^n$, $N_v$ can be computed because it depends on a one-dimensional SFT. Then the max on a finite set and all the other operations are also doable. As a consequence, $h(X_{H,V})$ is left-recursively enumerable. Being, by \cite{HM}, right-recursively enumerable, it is computable.
	
	The proof is similar if $\mathcal{G}(H)$ is made of, say, $k>1$ state-split cycles $C_j$ numbered from $1$ to $k$: consider $p$ the LCM of their periods; $\phi$ projects words of $\A^n$ on $(\{1,\dots,k\} \times \{0,\dots,p-1\})^n$ that indicates for each letter to which $C_j$ it belongs, then to which $U^j_i$ of that $C_j$. Once again, one vertical word of length $n$ is enough to reconstruct, for any $m \in \N$, a $pm \times n$ rectangle except for the precise choice of an element in each $U^j_i$. The rest of the computation is similar.
\end{proof}

\subsection{Condition D, uncomputable entropy}
\label{condDuncomputable}


Other horizontal constraints that verify condition $D$, such as some of reflexive type, allow for a greater range of accessible entropies. In what follows, we briefly investigate transformations from graphs of one-dimensional SFTs that originally \emph{do not} verify condition $D$ where the resulting graph does verify condition $D$, yet is robust enough so that the generic construction in \cref{subsec:core} can still be mostly performed: it keeps the capacity to obtain, up to a multiplicative factor, right-recursively enumerable entropies by choosing adequate vertical constraints.

\begin{proposition}
	\label{prop:loops}
	Let $H \subset \A^\Z$ be a nearest-neighbor SFT whose Rauzy graph does not respect condition $D$, and so that it has either no almost-attractive or attractive vertex, or no almost-repulsive or repulsive vertex -- where almost means that the only missing edge is a loop on the designated vertex.
	
	Let $\tilde{H} \subset \A^\Z$ be the nearest-neighbor SFT obtained when adding a loop to every vertex of the Rauzy graph of $H$. Then $\tilde{H}$ verifies condition $D$, but there exists a subshift of finite type $V$, which can be obtained algorithmically with $H$ as input, such that $h(X_{\tilde{H},V})$ is not computable.
\end{proposition}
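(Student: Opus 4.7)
The first claim is immediate: every vertex of the Rauzy graph of $\tilde{H}$ carries a loop, so every strongly connected component is of reflexive type, hence $\tilde{H}$ verifies condition D.

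For the second claim, the plan is to apply the generic construction of \cref{sec:simulation} to the original $H$ and then to show that adding loops does not perturb the entropy of the resulting subshift. Since $H$ does not verify condition D and has no almost-attractive or attractive vertex (the other case is symmetric), the case analysis of Sections~\ref{subsec:case1}--\ref{subsec:additional} algorithmically produces cycles $C^1$ and $C^2$ in the Rauzy graph of $H$ for which the construction of \cref{subsec:core} applies. Fix a Wang shift $W$ with non-computable entropy; such a $W$ exists since, by \cite{HM}, every $\Pi_1$-computable real is the entropy of some two-dimensional SFT, and every two-dimensional SFT is conjugate to a Wang shift. \cref{propcondC} applied to $W$ yields a vertical SFT $V := V_W$, algorithmically computable from $H$, such that $X_{H,V}$ is an $(M, KMN)$th root of $W$. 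By \cref{rootentropy}, $h(X_{H,V}) = h(W)/(KM^2N)$, which is non-computable. We claim $h(X_{\tilde{H},V}) = h(X_{H,V})$, which will finish the proof.

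The inequality $h(X_{\tilde{H},V}) \geq h(X_{H,V})$ follows from $X_{H,V} \subseteq X_{\tilde{H},V}$. For the reverse, the key observation is that the loops only enlarge $X_{H,V}$ by allowing a ``stalling'' option at each column boundary. The alignment argument of \cref{propapproxaligned} still applies in $\tilde{H}$: the hypothesis on $H$ implies that $\tilde{H}$ has no attractive vertex (adding loops would turn an almost-attractive vertex into an attractive one, but both are excluded). However, the synchronization argument of \cref{propsync} is weakened: within the single-letter meso-slices, both the standard shift by $1$ and a ``shift by $0$'' via loops are now allowed, and no other shift is, since $C^1$ and $C^2$ have no uniform shortcuts of other lengths in $H$. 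Because $V$ imposes a rigid cycle structure on each column, the shift at a boundary propagates vertically: if it is $0$ at one row it must be $0$ at every row of that boundary (the two columns are then exact duplicates), and if it is $1$ it is $1$ throughout. Inside the code meso-slices the absence of cross-bridges, which still holds in $\tilde{H}$ since loops create none, rules out any mixed transitions, so the same binary dichotomy persists.

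Consequently, each width-$n$, height-$m$ pattern of $X_{\tilde{H},V}$ is determined by a stalling sequence $s \in \{0,1\}^{n-1}$ together with an underlying $X_{H,V}$-pattern of width $n' = 1 + |\{j : s_j = 1\}|$ obtained by deleting the duplicated columns. This yields
\[
N_{X_{\tilde{H},V}}(n, m) \leq 2^{n-1} \max_{1 \leq n' \leq n} N_{X_{H,V}}(n', m).
\]
Taking logarithms, dividing by $nm$, and letting $n, m \to \infty$, the factor $2^{n-1}$ contributes $O(1/m) \to 0$ while the second term tends to $h(X_{H,V})$. Hence $h(X_{\tilde{H},V}) = h(X_{H,V})$ is non-computable, as claimed. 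The main technical obstacle is the careful verification that the loops create no horizontal transitions beyond stalling and advancing, in particular within the code meso-slices; this reduces to checking that the conditions (no cross-bridge, no attractive or repulsive vertex) underlying \cref{propapproxaligned,propsync,proptransmission} are preserved when loops are added, which is exactly what the hypothesis on $H$ guarantees.
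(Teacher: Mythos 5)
Your first paragraph and the overall plan — reduce to $X_{H,V}$, show $h(X_{\tilde H,V})=h(X_{H,V})$ by a counting bound — match the paper's strategy. However, your central counting step contains a genuine gap that the paper itself identifies as the crux of the argument.

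You claim that every width-$n$ pattern of $X_{\tilde H,V}$ is determined by a stalling sequence in $\{0,1\}^{n-1}$ plus an underlying $X_{H,V}$-pattern, reasoning that ``the shift at a boundary propagates vertically,'' so the only possibilities are a duplicate column or a standard advance. This overlooks a third possibility introduced by the loops: a \emph{one-cell vertical offset} between adjacent columns. If $K$ is a valid column in $V$, every vertical pair $(K_y,K_{y+1})$ is either a repetition (within a meso-slice) or an edge of $C^1$ or $C^2$ (at a meso-slice boundary). After adding loops to every vertex, \emph{all} of these pairs are edges of $\tilde{\mathcal G}(H)$, so $\sigma^{(0,1)}(K)$ (and more generally a column whose macro-slice structure is offset by exactly one cell, with possibly different coding content) can legally sit next to $K$ in $X_{\tilde H,V}$. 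This is not a ``stall'' and not a standard advance; the proof of \cref{propsync} fails in $\tilde H$ because adding loops creates a uniform shortcut of length $0$ in $C^1$, and the one-cell-offset case is precisely what remains after the meso-slice-scale alignment argument. Because such an offset changes the vertical position of the code meso-slice in the next column, patterns of $X_{\tilde H,V}$ with one-cell offsets are not reconstructible from a stalling sequence plus an $X_{H,V}$-pattern, so your bound
\[
N_{X_{\tilde H,V}}(n,m)\leq 2^{n-1}\max_{1\leq n'\leq n}N_{X_{H,V}}(n',m)
\]
is unjustified as stated.

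The paper deals with this head-on: it does \emph{not} take $V=V_W$ unchanged, but first reduces to the case where $\mathcal G(H)$ has a loop on every vertex except one (so case~1.1 applies and $C^2$ is a singleton, collapsing the mixed-index adjacencies you would otherwise also have to worry about), and then \emph{modifies} $V$ by appending two extra $c^1_i$ cells to the bottom of each coding micro-slice and by forcing the vertical succession of macro-slices inside one column to advance the index. This modification makes a one-cell offset produce a uniform shortcut of length $2$ in $\mathcal G(H)$, which is forbidden; only then do the remaining adjacency types reduce to ``advance'' and ``duplicate'' and does a counting bound of your form (the paper uses $\binom{2n}{n+1}$ rather than $2^{n-1}\max_{n'}$) become valid. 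Your proof needs either this modification of $V$ or an independent argument that one-cell vertical offsets cannot occur, or else contribute nothing to entropy, in $X_{\tilde H,V}$ with the unmodified $V$; as written, neither is supplied.
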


\begin{proof}[Proof (Sketched)]
	First, one can consider that the Rauzy graph of $H$ has a loop on every vertex except one: such a graph also yields $\tilde{H}$ when a loop is added to its last loopless vertex, resulting in the same possible constructions -- and entropies -- with $\tilde{H}$ as horizontal constraints.
	
	When trying to apply the generic construction from \cref{subsec:core} on such a $H$, one ends up in case 1.1, where the only risk in the construction is the presence of both attractive or repulsive vertices. This is prevented here by the hypotheses of the theorem. Therefore by use of \cref{th:root}, for any two-dimensional SFT $Y \subset \mathcal{B}^{\Z^2}$, there exists a one-dimensional SFT $V_Y\subset\A^\Z$ such that $X_{H,V_Y}$ is a $(m,n)$th root of $Y$ for some $m, n \in \N^2$. Furthermore, $m$, $n$ and $V_Y$ can be computed algorithmically.
	
	However, the construction of $X_{H,V_Y}$ has to be slightly adapted, because if one were to follow the construction of \cref{sec:simulation} as is,  $X_{\tilde{H}, V_Y}$ could glue an $(i,j)$ macro-slice to the following types of macro-slices:
	\begin{enumerate}
		\item an $(i+1,j+1)$ macro-slice;
		\item another $(i,j)$ macro-slice;
		\item an $(i+1,j+1)$ macro-slice but the second slice is exactly one cell down;
		\item another $(i,j)$ macro-slice but the second slice is exactly one cell up;
		\item an $(i+1,j)$ macro-slice;
		\item an $(i,j+1)$ macro-slice;
		\item an $(i+1,j)$ macro-slice but the second slice is exactly one cell down;
		\item an $(i,j+1)$ macro-slice but the second slice is exactly one cell up.
	\end{enumerate}
	Case 1 is the one supposed to happen. Case 2 will be acceptable because it does not bring entropy, as seen below. Cases 5, 6, 7 and 8 collapse on other cases because $H$ has a $C^2$ made of a unique element in its generic construction, causing the $j$ index to be irrelevant.
	Cases 3 and 4 are the ones we have to deal with. In short, this is done with two things:
	\begin{itemize}
		\item two extra symbols at the bottom of any $(i,j)$-coding micro-slice, forced to be $c^1_i$;
		\item and forcing an $(i+1,j+1)$ macro-slice below any $(i,j)$ macro-slice, instead of having each column based on a single pair $(i,j)$.
	\end{itemize}
	These modifications forbid one-cell shifts between two columns, else the $C^1$ elements at the bottom of each coding micro-slice would be in contact with another element of $C^1$ at distance $2$, resulting in a uniform shortcut of length $2$ in the Rauzy graph of $H$, which is forbidden.
	In what follows, we suppose we modify $V_Y$ with all this, so that we obtain a new $X_{H,V_Y}$ and only Cases 1 and 2 of the previous enumeration happen for the corresponding $X_{\tilde{H},V_Y}$.
	
	When performing this adapted construction, only the dividing constants are modified in \cref{prop:XHV} (because the size of the macro-slices is modified). The result itself is unchanged: notably, it still shows that a non-computable entropy $h(Y)$ yields a non-computable entropy $h(X_{H,V_Y})$. The only part left is to prove that $h(X_{\tilde{H},V_Y}) = h(X_{H,V_Y})$. 
	
	Since the only difference between $X_{\tilde{H},V_Y}$ and $X_{H,V_Y}$ is the possibility to repeat a column several times in a row, we have
	\[
	N_{X_{\tilde{H},V_Y}}(n,n) \geq N_{X_{H,V_Y}}(n,n)
	\]
	but also
	\[
	N_{X_{\tilde{H},V_Y}}(n,n) = \sum_{k=1}^{n} \sum_{i_\ell | i_1 + \dots + i_k = n} N_{X_{H,V_Y}}(i_\ell,n) \leq \sum_{k=1}^n \binom{n+k-1}{n} N_{X_{H,V_Y}}(n,n) = \binom{2n}{n+1} N_{X_{H,V_Y}}(n,n).
	\]
	by counting, for $n$ columns, how many types of them there are. $\log_2(\binom{2n}{n+1})$ is a $\mathcal{O}(2n\log_2(2n))$, and therefore applying $\lim_n \dfrac{\log_2(.)}{n^2}$ to these bounds shows that the entropy is the same.
\end{proof}



\end{document}